\def\+{\oplus}
\newcommand{\R}{{\mathbb R}}
\newcommand{\cG}{{\mathcal G}}
\newcommand{\cJ}{{\mathcal J}}
\newcommand{\cC}{{\mathcal C}}
\newcommand{\cM}{{\mathcal M}}
\newcommand{\cT}{{\mathcal T}}
\newcommand{\cR}{{\mathcal R}}
\renewcommand{\epsilon}{\varepsilon}
\newcommand{\Ga}{{\Gamma}}
\renewcommand{\l}{\lambda}
\newcommand{\FL}{{\rm{FL}}}
\newcommand{\tH}{\widetilde{\rm{H}}}
\newcommand{\ds}{\displaystyle}
\def\squareforqed{\hbox{\rlap{$\sqcap$}$\sqcup$}}
\def\qed{\ifmmode\else\unskip\quad\fi\squareforqed}
\def\smartqed{\def\qed{\ifmmode\squareforqed\else{\unskip\nobreak\hfil
\penalty50\hskip1em\null\nobreak\hfil\squareforqed
\parfillskip=0pt\finalhyphendemerits=0\endgraf}\fi}}
 \newtheorem{theorem}{\textbf{Theorem}}[section]
 \newtheorem{remark}[theorem]{\textbf{Remark}}
 \newtheorem{lemma}[theorem]{\textbf{Lemma}}
 \newtheorem{corollary}[theorem]{\textbf{Corollary}}
 \newtheorem{proposition}[theorem]{\textbf{Proposition}}
 \newtheorem{definition}[theorem]{\textbf{Definition}}
\numberwithin{equation}{section}
\title{Effective transmission conditions for Hamilton-Jacobi equations defined on two domains separated by an oscillatory interface} 
\author{Yves Achdou \thanks { Univ. Paris Diderot, Sorbonne Paris Cit{\'e}, Laboratoire Jacques-Louis Lions, UMR 7598, UPMC, CNRS, F-75205 Paris, France.
 achdou@ljll.univ-paris-diderot.fr},
Salom{\'e} Oudet \thanks {IRMAR, Universit{\'e} de Rennes 1, Rennes, France},
Nicoletta Tchou \thanks {IRMAR, Universit{\'e} de Rennes 1, Rennes, France, nicoletta.tchou@univ-rennes1.fr}
}
\begin{document}

\maketitle
\begin{abstract}
  We consider a family of optimal control problems in the plane  with  dynamics and running costs possibly discontinuous across an oscillatory interface $\Gamma_\epsilon$. The oscillations of the interface have small period and amplitude, both of the order of $\epsilon$, and  the  interfaces  $\Gamma_\epsilon$ tend to a straight line $\Gamma$.
  We study the asymptotic behavior as $\epsilon\to 0$. We prove that the value function tends to the solution of  Hamilton-Jacobi equations in the two half-planes limited by  $\Gamma$, with an effective transmission condition on $\Gamma$ keeping track of the oscillations of $\Gamma_\epsilon$.
\end{abstract}

\section{Introduction}
\label{sec:setting}
The goal of this paper is to study the asymptotic behavior as $\epsilon\to 0$ of the value function of an optimal 
control problem in $\R^2$  in which the running cost and dynamics may jump across a periodic oscillatory interface $\Gamma_\epsilon$, when 
the oscillations of $\Gamma_\epsilon$ have  a small amplitude and  period, both of the order of $\epsilon$. 
 The interface $\Gamma_\epsilon$ separates two unbounded regions of $\R^2$, $\Omega_\epsilon^L$ and $\Omega_\epsilon^R$. 
To characterize the optimal control problem,  one has to specify the admissible dynamics at a point $x\in \Gamma_\epsilon$: in our setting, no mixture is allowed at the interface, 
i.e. the admissible dynamics 
are the ones corresponding to the subdomain $ \Omega_\epsilon^L$ {\bf and} entering $ \Omega_\epsilon^L$,
 or corresponding to the subdomain $ \Omega_\epsilon^R$ {\bf and} entering $ \Omega_\epsilon^R$. Hence the situation differs from those studied in the 
articles of G. Barles, A. Briani and E. Chasseigne \cite{barles2011bellman,barles2013bellman} and of  G. Barles, A. Briani, E. Chasseigne and N. Tchou \cite{barles:hal-00986700}, 
in which mixing is allowed at the interface. The optimal control problem under consideration has been first studied in \cite{oudet2014}:  the value function is
characterized as the viscosity solution of a Hamilton-Jacobi equation
 with special transmission conditions on $ \Gamma_\epsilon$; a comparison principle for this problem is proved in \cite{oudet2014} with arguments 
 from the theory of optimal control similar to those introduced in \cite{barles2011bellman,barles2013bellman}. In parallel to \cite{oudet2014}, 
Imbert and Monneau have studied similar problems
from the viewpoint of PDEs, see \cite{imbert:hal-01073954},  and have obtained comparison  results for quasi-convex Hamiltonians.
In particular,  \cite{imbert:hal-01073954} contains a characterization of the viscosity solution of the transmission problem
 with a reduced set of test-functions;  this characterization will be used in the present work.
Note that \cite{oudet2014,imbert:hal-01073954} can be seen as extensions of articles 
devoted to the analysis of Hamilton-Jacobi equations on networks, see \cite{MR3057137,MR3023064,achdou:hal-00847210,imbert:hal-00832545}, 
because the notion of interface used there can be seen as 
a generalization of the notion of vertex (or junction) for a network.
\\
We will see that  as $\epsilon$ tends to $0$, the value function converges  to the solution of an effective problem
related to a flat interface $\Gamma$,  with Hamilton-Jacobi equations in the  half-planes limited by $\Gamma$ and a transmission condition on $\Gamma$.
\\
Whereas the partial differential equation  far from the interface is unchanged, the main difficulty consists in finding the effective transmission condition on $\Gamma$.
Naturally, the latter depends on the dynamics and running cost  but also 
keeps  memory of the vanishing oscillations.  The present work is closely related to two recent articles, \cite{MR3299352} and \cite{galise:hal-01010512},
  about singularly  perturbed problems  leading to effective Hamilton-Jacobi equations on networks. 
Indeed, an effective Hamiltonian corresponding to  trajectories staying close to   the junction  was first
obtained in \cite{MR3299352} as the limit of a sequence of ergodic constants corresponding to larger and larger bounded subdomains.
  This construction was then used in \cite{galise:hal-01010512} in a different case. Let us briefly describe the singular perturbation problems studied in \cite{MR3299352} and \cite{galise:hal-01010512}:  in \cite{MR3299352},  some of the authors of the present paper study a family of star-shaped planar domains  $D^\epsilon$ made of $N$ non intersecting semi-infinite strips  of thickness $\epsilon$ and of a central region whose diameter is proportional to $\epsilon$.  As  $\epsilon \to 0$,  the domains $D^\epsilon$ tend to a  network $\cG$ made of $N$ half-lines sharing an endpoint $O$, named the vertex or junction point.  For infinite horizon optimal control problems  in which the state is constrained to remain in the closure of $D^\epsilon$, the value function tends to the solution of a Hamilton-Jacobi equation on $\cG$, with an effective  transmission condition at $O$. 
  In \cite{galise:hal-01010512},  Galise, Imbert and Monneau study a family of Hamilton-Jacobi equations in a simple network composed of two half-lines with a perturbation of the Hamiltonian localized in a small region close to the junction. 
\\
In the proof of convergence, we will see that the main technical point lies in the construction of correctors  and in their use in the perturbed test-function method of Evans, see \cite{MR1007533}. As in \cite{MR3299352} and \cite{galise:hal-01010512}, an important difficulty  comes from the unboundedness of the domain in which the correctors are defined.  
The strategies for passing to the limit in  \cite{MR3299352} and \cite{galise:hal-01010512} differ: the method proposed in \cite{MR3299352} consists of contructing an infinite family of correctors related to the vertex, while in  \cite{galise:hal-01010512}, only one corrector related to the vertex is needed thanks to the use of the above mentioned  reduced set of test-functions.  Arguably, the strategy proposed in \cite{MR3299352} is more natural and that in \cite{galise:hal-01010512}  is simpler. For this reason,  the technique 
 implemented in the present work for proving the convergence to the effective problem will be closer to the one proposed in \cite{galise:hal-01010512}. Note  that similar techniques are used in the very recent work \cite{forcadel:hal-01097085}, which deals with applications to traffic flows. The question of the correctors in unbounded domains has recently been addressed by P-L. Lions in his lectures at Coll{\`e}ge de France, \cite{PLL}, precisely in january and february 2014: the  lectures dealt with recent and still unpublished results obtained in collaboration with T. Souganidis on the asymptotic behavior of solutions of Hamilton-Jacobi equations in a periodic setting with some localized defects.  Finally, we stress the fact that the technique proposed in the present work is not specific to the transmission condition imposed on $\Gamma_\epsilon$. 

The paper is organized as follows: in the remaining part of \S~\ref{sec:setting}, we set the problem and give the main result. In Section \ref{sec:stra-geom}, we show that the problem is equivalent to a more convenient one,  set in a straightened fixed geometry.  In \S~\ref{sec:asymptotic_behavior}, we study the asymptotic behavior far from the interface and introduce some ingredients that will be useful to define the effective transmission  condition. In \S~\ref{sec:new-hamilt-involv}, we define the effective cost/Hamiltonian for moving along the effective interface, and related correctors. This is of course a key step in the study of the asymptotic behavior. Section~\ref{sec:furth-results-corr} deals with further properties of the correctors, in particular their growth at infinity. The comparison result for the effective problem is stated in \S~\ref{sec:comp-princ-refd}, and the proof of the main convergence theorem is written in \S~\ref{sec:proof-main-result}.

\subsection{The geometry}
\label{sec:geometry}
Let $(e_1, e_2)$ be an orthonormal basis of $\R^2$: $ e_1=\left(\begin{matrix}
1\\0\end{matrix} \right)$, $e_2=\left(\begin{matrix}0\\1\end{matrix} \right)$.
Let $g : \R \to \R$ be a $\mathcal{C}^2$-function, periodic with period $1$. For $\epsilon>0$, let
$(\Omega^L_\epsilon, \Ga_\epsilon, \Omega^R_\epsilon)$ be  the following partition of $\R^2$:
\begin{eqnarray}
  \label{eq: def_Ga_epsilon}
\Ga_\epsilon = \left\lbrace (x_1,x_2)\in \R^2 : x_1=\epsilon g \left(\frac{x_2}{\epsilon}\right) \right\rbrace,\\
\label{eq: def_Omega_1_epsilon}
\Omega^L_\epsilon  =\left\lbrace (x_1,x_2)\in \R^2 : x_1<\epsilon g \left(\frac{x_2}{\epsilon}\right) \right\rbrace,\quad\quad 
\Omega^R_\epsilon = \left\lbrace (x_1,x_2)\in \R^2 : x_1>\epsilon g \left(\frac{x_2}{\epsilon}\right) \right\rbrace.
\end{eqnarray}
Note that  $\partial \Omega^L_\epsilon= \partial \Omega^R_\epsilon= \Ga_\epsilon$. For $x\in \Gamma_\epsilon$, the vector
 \begin{equation}\label{eq: expression_n_epsilon}
n_\epsilon(x)=\left(\begin{matrix}
1\\
-g'(\frac{x_2}{\epsilon})
\end{matrix}\right)
\end{equation}
is  normal to $\Ga_\epsilon$ and oriented from $\Omega^L_\epsilon$ to $\Omega^R_\epsilon$. With
\begin{equation}
\label{eq:def_sigma^i}
\sigma^L= -1,\quad\quad \sigma^R=1,
\end{equation} 
the vector $\sigma^i n_\epsilon(x)$ is normal to $\Ga_\epsilon$ at the point $x\in \Ga_\epsilon$ and points toward $\Omega_\epsilon^i$, for $i=L,R$.\\
The geometry obtained at the limit when $\epsilon\to 0$ can also be found by  taking $g=0$ in the definitions above: let $(\Omega^L, \Ga, \Omega^R)$ be the  partition of $\R^2$ defined by
\begin{eqnarray}
\label{eq: def_Ga}
\Ga  = \left\lbrace (x_1,x_2)\in \R^2 : x_1=0 \right\rbrace,
\\
\label{eq: def_Omega_1}
\Omega^L  =\left\lbrace (x_1,x_2)\in \R^2 : x_1<0) \right\rbrace, \quad
\Omega^R  = \left\lbrace (x_1,x_2)\in \R^2 : x_1>0) \right\rbrace.
\end{eqnarray}
One sees that $\partial \Omega^L= \partial \Omega^R= \Ga$ and that for all $x\in \Gamma$, the unit normal vector to $\Ga$ at $x$ pointing toward $\Omega^R$ is 
 $n(x)=e_1$. The two kinds of geometry are represented in Figure \ref{fig: the_geometry}.
\begin{figure}[H]
\begin{center}
\subfigure[]{
    \label{fig:perturbed_geometry}
\begin{tikzpicture}[scale=0.50]
\draw [domain=0:5, samples=200, smooth]
plot ({(1/5)*sin(5*\x r)},\x) ;
\draw ({(1/5)*sin(5*2 r)},2) node{$\bullet$};
\draw[->] ({(1/5)*sin(5*2 r)},2) -- ++ (28:0.9);
\draw (-0.16,2) node[left]{$x$};
\draw (1.6,3) node[right,below]{$n_\epsilon(x)$};
\draw (0,5) node[above]{$\Ga_\epsilon$};
\draw (-2,3.9) node[above]{$\Omega^L_\epsilon$};
\draw (2,3.9) node[above]{$\Omega^R_\epsilon$};
\end{tikzpicture}
}
\quad \quad \quad \quad \quad
\subfigure[]{
    \label{fig:straight_geometry}
    \begin{tikzpicture}[scale=0.50]
\draw (0,0) -- (0,5);
\draw (0,5) node[above]{$\Ga$};
\draw (-2,4) node[above]{$\Omega^L$};
\draw (2,4) node[above]{$\Omega^R$};
\end{tikzpicture}
}
\caption[Optional caption for list of figures]{\subref{fig:perturbed_geometry}: $\Gamma_\epsilon$ is an oscillating interface with an amplitude and period of $\epsilon$
. \subref{fig:straight_geometry}: the geometry obtained at the limit when $\epsilon\to 0$  }
\label{fig: the_geometry}
\end{center}
\end{figure}

\subsection{The optimal control problem in  $\Omega^L_\epsilon\cup \Omega^R_\epsilon \cup \Ga_\epsilon$}
\label{optimal1}
We consider infinite-horizon optimal control problems which have different dynamics and running costs in the regions $\Omega^i_\epsilon$, $i=L, R$.
  The sets of controls associated to the index $i=L,R$ will be called $A^i$; 
similarly, the notations $f^i$ and  $\ell^i$ will be used for the dynamics and running costs.
The following  assumptions will be made in all the work
\subsubsection{Standing Assumptions}
\label{sec:assumptions}
\begin{description}
\item{[H0]} $A$ is a metric space (one can take $A=\R^m$). For $i=L,R$,  $A^i$ is a non empty compact subset of $A$ and
 $f^i: \R^2\times A^i \to \R^2$ is  a continuous bounded function. The sets $A^i$ are disjoint.
 Moreover, there exists $L_f>0$ such that for any $i=L,R$, $x,y\in \R^2$ and $a\in A^i$,
  \begin{displaymath}
    |f^i(x,a)-f^i(y,a)|\le L_f |x-y|.
  \end{displaymath}
 Define  $M_f=   \max_{i=L, R}   \sup_{x\in \R^2, a \in    A^i } | f^i(x,a)| $.
  The notation $F^i(x)$ will be used for the set $F^i(x)=\{f^i(x,a), a\in A^i\} $.
\item{[H1]} For $i=L,R$, the function $\ell^i: \R^2\times A^i\to \R$ is  continuous and bounded. 
There is a modulus of continuity $\omega_\ell$ 
such that for any $i=L,R$, $x,y\in \R^2$ and $a\in A^i$, 
\begin{displaymath}
|\ell^i(x,a)-\ell^i(y,a)|\le \omega_{\ell} (|x-y|).
\end{displaymath}
 Define  $M_\ell=   \max_{i=L, R}   \sup_{x\in \R^2, a \in    A^i } | \ell^i(x,a)| $.
\item{[H2] }  For any $i=L,R$ and $x\in \R^2$, the non empty  set $\FL^i(x)= \{ (f^i(x,a), \ell^i(x,a) ) , a\in A^i\}$ is  closed and convex. 
\item{[H3] } There is a real number $\delta_0>0$ such that for $i=L,R$ and all $x\in \Ga_\epsilon$, 
$   B(0,\delta_0) \subset F^i(x)$.
\end{description}
We stress the fact that all the  results below hold provided  the latter  assumptions are satisfied,  
although, in order to avoid tedious repetitions, we will not mention them explicitly in the statements.
\\
We refer to \cite{achdou:hal-00847210} and \cite{oudet2014} for comments on the assumptions and the  genericity of the model,
stressing in particular that the sets $A^L, A^R$ can always been supposed disjoint.

\subsubsection{The optimal control problem}
\label{sec:optim-contr-probl}
Let the closed set $\cM_\epsilon$ be defined as follows:
\begin{equation}
  \label{eq: def-M_epsilon}
  \cM_\epsilon=\left\{(x,a);\; x\in \R^2,\quad  a\in A^i  \hbox{ if } x\in \Omega_\epsilon^i,\; i=L,R,\;  \hbox{ and } a     \in A^L\cup A^R  \hbox{ if } x \in \Ga_\epsilon\right \}.
  \end{equation}
  The dynamics $f_\epsilon$ is a function defined in  $\cM_\epsilon$ with values in $\R^2$: 
\begin{displaymath}
\forall (x,a)\in \cM_\epsilon,\quad\quad   f_\epsilon(x, a)=\left\{
    \begin{array}[c]{ll}
      f^i(x,a) \quad &\hbox{ if } x\in \Omega_\epsilon^i, \\
      f^i(x,a) \quad &\hbox{ if } x\in\Ga_\epsilon \hbox{ and }     a\in A^i.
    \end{array}
\right.
\end{displaymath}
The function $f_\epsilon$ is continuous on $\cM_\epsilon$ because the sets $A^i$ are disjoint. Similarly, let the running cost $\ell_\epsilon: \cM_\epsilon\to \R$ be given by
\begin{displaymath}
\forall (x,a)\in \cM_\epsilon,\quad\quad   \ell_\epsilon(x, a)=\left\{
    \begin{array}[c]{ll}
      \ell^i(x,a) \quad &\hbox{ if } x\in \Omega_\epsilon^i, \\
      \ell^i(x,a) \quad &\hbox{ if } x\in\Ga_\epsilon \hbox{ and }     a\in A^i.
    \end{array}
\right.
\end{displaymath}
For $x\in \R^2$, the set of admissible  trajectories starting from $x$ is 
\begin{equation}
  \label{eq:2}
\cT_{x,\epsilon}=\left\{
  \begin{array}[c]{ll}
    ( y_x, a)  \ds \in L_{\rm{loc}}^\infty( \R^+; \cM_\epsilon): \quad   & y_x\in {\rm{Lip}}(\R^+; \R^2), 
 \\  &\ds  y_x(t)=x+\int_0^t f_\epsilon( y_x(s), a(s)) ds \quad  \forall t\in \R^+ 
  \end{array}\right\}.
\end{equation}
The cost associated to the trajectory $ ( y_x, a)\in \cT_{x,\epsilon}$ is 
\begin{equation}
  \label{eq:46}
  \cJ_\epsilon(x;( y_x, a) )=\int_0^\infty \ell_\epsilon(y_x(t),a(t)) e^{-\lambda t} dt,
\end{equation}
with $\lambda>0$. The value function of the infinite horizon optimal control problem is 
\begin{equation}
  \label{eq:4}
v_\epsilon(x)= \inf_{( y_x, a)\in \cT_{x,\epsilon}}   \cJ_\epsilon(x;( y_x, a) ).
\end{equation}
\begin{proposition}\label{sec:assumption}
 The value function $v_\epsilon$ is bounded and continuous in $ \R^2$.
\end{proposition}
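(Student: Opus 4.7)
The plan is as follows. First, boundedness is immediate: since $|\ell_\epsilon|\le M_\ell$ on $\cM_\epsilon$, any admissible trajectory $(y_x,a)\in \cT_{x,\epsilon}$ yields $|\cJ_\epsilon(x;(y_x,a))|\le M_\ell/\lambda$, so $\|v_\epsilon\|_\infty\le M_\ell/\lambda$.

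For continuity, the strategy is to establish a modulus of continuity directly from the dynamic programming principle, using the Lipschitz regularity of the dynamics away from $\Gamma_\epsilon$ and the controllability assumption [H3] near $\Gamma_\epsilon$. Fix $x,y\in\R^2$. The idea is, for any near-optimal trajectory $(y_x,a)$ starting from $x$, to build an admissible trajectory starting from $y$ whose cost is close to $\cJ_\epsilon(x;(y_x,a))$. I would split into two regimes.

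First, if both $x,y$ lie in the same component $\Omega_\epsilon^i$ and $|x-y|$ is small enough that the trajectory $y_x(\cdot)$ stays in $\Omega_\epsilon^i$ on some short interval $[0,\tau]$, I would replay the same control $a(\cdot)$ from $y$: by [H0], Gronwall yields $|y_x(t)-y_y(t)|\le |x-y|e^{L_f t}$, so both trajectories remain close and in $\Omega_\epsilon^i$ on a time interval of length $\tau\sim c\,\mathrm{dist}(x,\Gamma_\epsilon)/M_f$, and the running cost difference is controlled by the modulus $\omega_\ell$ from [H1], while the tail is controlled by $e^{-\lambda\tau}$ together with the DPP. The dangerous case is when $x$ or $y$ is close to (or on opposite sides of) $\Gamma_\epsilon$: here I would exploit [H3], which says $B(0,\delta_0)\subset F^i(x)$ at interface points. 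By continuity, this full controllability persists in an $\epsilon$-independent tubular neighborhood of $\Gamma_\epsilon$, so one can construct, at bounded cost and in time $O(|x-y|/\delta_0)$, an admissible trajectory from $y$ that reaches a point $z$ with $|z-x|$ arbitrarily small and on the same side of $\Gamma_\epsilon$ as $x$; composing this short connecting trajectory with a near-optimal trajectory from $z$ (treated by the first case), and using the DPP together with the bound $|\ell_\epsilon|\le M_\ell$ on the short piece, gives $|v_\epsilon(y)-v_\epsilon(x)|\le \omega(|x-y|)$ for a modulus $\omega$ independent of $\epsilon$ locally.

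The main obstacle is the interface: without [H3], one cannot connect the two sides and continuity could fail. The controllability hypothesis is precisely tailored to overcome this, and it is the only nontrivial ingredient beyond standard optimal control arguments. As an alternative, one may simply appeal to \cite{oudet2014}, where $v_\epsilon$ is characterized as the unique bounded continuous viscosity solution of the transmission problem on $\Gamma_\epsilon$; the continuity statement is then a direct consequence of that characterization.
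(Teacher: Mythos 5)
Your approach is the classical direct one that the paper points to by citing \cite{MR1484411}: boundedness follows at once from $|\ell_\epsilon|\le M_\ell$ and the discount, and continuity comes from the dynamic programming principle, with the controllability assumption [H3] supplying the bridge across $\Gamma_\epsilon$. The two ingredients you isolate (replaying controls away from the interface, and a Lipschitz estimate in a tube around $\Gamma_\epsilon$ obtained from [H3]) are exactly the right ones, and your observation that [H3] is the only non-standard hypothesis needed is correct.

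Where you are too loose is in splicing the two regimes together, and as written your regime 1 does not close by itself. With $\tau$ of order $\mathrm{dist}(x,\Gamma_\epsilon)/M_f$, the term $e^{-\lambda\tau}\,\bigl|v_\epsilon(y_x(\tau))-v_\epsilon(y_y(\tau))\bigr|$ remaining after the DPP is neither small by discounting (since $\tau$ stays bounded as $|x-y|\to 0$) nor controlled by $\omega_\ell$; and $y_x(\tau)$ need not lie near $\Gamma_\epsilon$ (your $\tau$ is only a lower bound on the hitting time), so regime 2 cannot a priori be invoked either. Also, one cannot simply wait until the trajectory actually hits a tube around $\Gamma_\epsilon$, since the Gronwall factor $e^{L_f t}$ may push $y_y$ to the wrong side before that happens. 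The fix is to stop at $\tau=\min(T,T')$ where $T$ is the hitting time of a fixed tube and $T'=L_f^{-1}\log\bigl(1/\sqrt{|x-y|}\bigr)$, so that on $[0,\tau]$ the two replayed trajectories stay within $\sqrt{|x-y|}$ of each other and in the same component: if $T\ge T'$, the tail is of order $|x-y|^{\lambda/(2L_f)}$ by discounting; if $T<T'$, both endpoints lie in the tube and are $\sqrt{|x-y|}$-close, and the near-interface Lipschitz bound from your regime 2 finishes the estimate. With this choice of $\tau$ and this hand-off between the two regimes, your argument produces a genuine modulus of continuity and is complete; your alternative of quoting the characterization in \cite{oudet2014} is also acceptable, though it relocates rather than avoids the work.
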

\begin{proof}
This result is classical and can be proved with the same arguments as in \cite{MR1484411}.
\end{proof}

\subsection{The Hamilton-Jacobi equation}
\label{sec:hamilt-jacobi-equat}
Similar optimal control problems have recently been studied in \cite{achdou:hal-00847210,imbert:hal-00832545,oudet2014,imbert:hal-01073954}.
It turns out that $v_\epsilon$ can be characterized as the viscosity solution of a Hamilton-Jacobi equation with a discontinuous Hamiltonian,
(once the notion of viscosity solution has been specially tailored to cope with the above mentioned discontinuity).  We briefly recall the definitions used e.g. in \cite{oudet2014}.
\subsubsection{Test-functions}\label{sec:test-functions}
\begin{definition}
\label{adtest}
For $\epsilon >0$,  the function $\phi: \R^2\to \R$ is an admissible ($\epsilon$)-test-function if
$\phi$ is continuous in $\R^2$  and for any $i\in \{L,R\}$, $\phi|_{\overline{\Omega^i_\epsilon}} \in\cC^1(\overline{\Omega^i_\epsilon})$.
\\The set of admissible test-functions is noted $\cR_\epsilon$. If $\phi \in \cR_\epsilon$, $x\in \Ga_\epsilon$ and $i\in \{L,R\}$, we  set
$D\phi^i(x)= \lim_{\overset{ x'\to x}{x'\in \Omega^i_\epsilon}}D\phi(x')$.
\end{definition}
\subsubsection{Hamiltonians}\label{sec:hamiltonians}
For $i=L,R$, let the Hamiltonians $H^i:\R^2\times \R^2\rightarrow \R $  and  $H_{ \Ga_\epsilon}:\Ga_\epsilon\times \R^2\times \R^2\to \R$  be defined by
\begin{eqnarray}
  \label{eq:7}
H^i(x,p)&=& \max_{a\in A^i} (-p \cdot f^i(x,a) -\ell^i(x,a)),\\
  \label{eq:8}
H_{ \Ga_\epsilon} (x,p^L,p^R)&=& \max \{ \;H_{ \Ga_\epsilon}^{+,L}(x,p^L),H_{ \Ga_\epsilon}^{+,R} (x,p^R)\},
\end{eqnarray}
where, with   $n_\epsilon(x)$ and $\sigma^i$ defined in  \S~\ref{sec:geometry},
\begin{equation}\label{eq:1}
H_{ \Ga_\epsilon}^{+,i} (x,p)= \max_{a\in A^i \hbox{ s.t. }   \sigma^if^i(x,a)\cdot n_\epsilon(x)\ge 0} (-p\cdot f^i(x,a) -\ell^i(x,a)), \quad \forall x\in \Ga_\epsilon, \forall p\in \R^2.
\end{equation}

\subsubsection{Definition of viscosity solutions}\label{sec:definitiona}
We now recall the definition of  a viscosity solution of
\begin{equation}\label{HJaepsilon}
    \lambda u+{\cal{H}}_\epsilon(x, Du)=0.
\end{equation}
\begin{definition}\label{netviscoa}
\begin{itemize}
\item An upper semi-continuous function $u:\R^2\to\R$ is a subsolution of \eqref{HJaepsilon}
 if for any $x\in \R^2$, any $\phi\in\cR_\epsilon$ s.t. $u-\phi$ has a local maximum point at $x$, then
 \begin{eqnarray}
  \label{eq:5bis}
\l u(x)+H^i(x,D\phi^i(x))&\le 0, \quad \quad &    \hbox{ if }x\in \Omega^i_\epsilon,\\
\label{eq:5bisgamma}
\l u(x)+H_{ \Ga_\epsilon} (x,D\phi^L(x),D\phi^R(x))&\le 0, \quad \quad &    \hbox{ if }x\in\Ga_\epsilon,
 \end{eqnarray}
see Definition \S~\ref{sec:geometry} for the meaning of  $D\phi^i(x)$ if $x\in \Gamma_\epsilon$.

\item  A lower semi-continuous function $u:\R^2\to\R$ is a supersolution of \eqref{HJaepsilon}
 if for any $x\in \R^2$, any $\phi\in\cR_\epsilon$ s.t. $u-\phi$ has a local minimum point at $x$, then
 \begin{eqnarray}
  \label{eq:6bis} \l u(x)+H^i(x,D\phi^i(x))&\geq 0,  \quad \quad &    \hbox{ if }x\in \Omega^i_\epsilon,\\
\label{eq:6bisgamma} \l u(x)+H_{ \Ga_\epsilon} (x,D\phi^L(x),D\phi^R(x))&\ge 0 \quad \quad &    \hbox{ if }x\in\Ga_\epsilon.
 \end{eqnarray}
  \item A continuous function $u:\R^2\to\R$ is a   viscosity solution of \eqref{HJaepsilon}   if it is both a viscosity sub and supersolution   of \eqref{HJaepsilon}.
\end{itemize}
\end{definition}

\subsubsection{Characterization of $v_\epsilon$ as a viscosity solution of (\ref{HJaepsilon})}
\label{sec:char-v-as}
The following theorem will be proved below, see Theorem~\ref{existence-epsilon}, by finding an equivalent optimal control problem in a straightened fixed geometry
and using some results contained in \cite{oudet2014}:
\begin{theorem}
  \label{existence-epsilon_a}
The value function $v_\epsilon$  defined in \eqref{eq:4}  is the unique bounded  viscosity solution of \eqref{HJaepsilon}.
\end{theorem}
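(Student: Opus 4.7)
The plan is to reduce the problem to an equivalent one posed in the fixed straight geometry $(\Omega^L,\Gamma,\Omega^R)$, and then invoke the characterization theorem of \cite{oudet2014} directly; the equivalence is obtained by an explicit change of variables and then pulled back to $\Gamma_\epsilon$. First I would introduce the $\cC^2$-diffeomorphism $\Phi_\epsilon:\R^2\to\R^2$ given by
\[
\Phi_\epsilon(y_1,y_2)=\bigl(y_1+\epsilon g(y_2/\epsilon),\,y_2\bigr),
\]
which maps $\Omega^i$ onto $\Omega^i_\epsilon$ for $i=L,R$ and $\Gamma$ onto $\Gamma_\epsilon$. Its Jacobian is lower triangular with determinant $1$; since $g\in\cC^2$ is $1$-periodic, both $D\Phi_\epsilon$ and $(D\Phi_\epsilon)^{-1}$ are uniformly bounded with uniformly Lipschitz entries. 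I would then transport the data by setting
\[
\widetilde f^i(y,a)=(D\Phi_\epsilon(y))^{-1}f^i(\Phi_\epsilon(y),a),\qquad \widetilde\ell^i(y,a)=\ell^i(\Phi_\epsilon(y),a),
\]
so that the chain rule gives $\dot{\widetilde y}(t)=\widetilde f^i(\widetilde y(t),a(t))$ whenever $y_x(t)=\Phi_\epsilon(\widetilde y(t))$ solves the original controlled ODE.

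Second, I would check that $(\widetilde f^i,\widetilde\ell^i)$ satisfies the standing assumptions [H0]--[H3] in the straight geometry: boundedness, continuity, Lipschitz constants and the modulus of continuity are inherited from $(f^i,\ell^i)$ and the smoothness of $\Phi_\epsilon$; the convex-closedness required by [H2] is preserved because $(D\Phi_\epsilon(y))^{-1}$ acts linearly on the velocity component; and [H3] holds with a possibly smaller $\widetilde\delta_0>0$ because $(D\Phi_\epsilon)^{-1}$ is uniformly bounded on $\Gamma_\epsilon$. The one-to-one correspondence between trajectories and the equality of running costs then show that $\widetilde v_\epsilon(y):=v_\epsilon(\Phi_\epsilon(y))$ is exactly the value function of the straightened control problem. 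By the corresponding characterization result in \cite{oudet2014}, $\widetilde v_\epsilon$ is the unique bounded viscosity solution of the transmission Hamilton--Jacobi equation on $(\Omega^L,\Gamma,\Omega^R)$ built from the transformed data.

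Finally, I would pull the characterization back to $v_\epsilon$. The map $\phi\mapsto \phi\circ\Phi_\epsilon^{-1}$ is a bijection between admissible test functions on the two geometries, so local extrema of $v_\epsilon-\phi$ at $x$ correspond to local extrema of $\widetilde v_\epsilon-\phi\circ\Phi_\epsilon$ at $y=\Phi_\epsilon^{-1}(x)$. Writing $p=(D\Phi_\epsilon(y))^{-T}q$, one has $p\cdot f^i(x,a)=q\cdot\widetilde f^i(y,a)$ for every $a$, hence $H^i(x,p)=\widetilde H^i(y,q)$. For the interface Hamiltonian, the explicit form of $\Phi_\epsilon$ shows that $n_\epsilon(x)$ is a positive multiple of $(D\Phi_\epsilon(y))^{-T}e_1$, so the admissibility constraint $\sigma^i f^i(x,a)\cdot n_\epsilon(x)\ge 0$ translates exactly into $\sigma^i\widetilde f^i(y,a)\cdot e_1\ge 0$; this identifies $H^{+,i}_{\Gamma_\epsilon}(x,p)$ with the corresponding transformed Hamiltonian, and therefore transfers both the sub/supersolution properties and uniqueness from $\widetilde v_\epsilon$ to $v_\epsilon$.

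The main technical obstacle is precisely this last identification: verifying that the orientation-sensitive restriction defining $H^{+,i}_{\Gamma_\epsilon}$ is preserved by the change of variables. This ultimately reduces to the fact that the first column of $(D\Phi_\epsilon(y))^{-T}$, evaluated at $y\in\Gamma$, is a positive multiple of the normal $n_\epsilon(\Phi_\epsilon(y))$ defined in \eqref{eq: expression_n_epsilon}---a direct but somewhat delicate computation relying on the triangular structure of $D\Phi_\epsilon$.
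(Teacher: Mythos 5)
Your proposal is correct and takes essentially the same approach as the paper: you introduce the diffeomorphism $\Phi_\epsilon$ (which is precisely the paper's $G^{-1}$ from \S~\ref{sec:changeofvar}), transport the dynamics via $(D\Phi_\epsilon)^{-1}f^i\circ\Phi_\epsilon$ and the cost via $\ell^i\circ\Phi_\epsilon$ (the paper's $\tilde f^i_\epsilon,\tilde\ell^i_\epsilon$ in \S~\ref{subsec:straight_optima_control}), verify $[\rm{H}0]$--$[\rm{H}3]$ in the straight geometry (the paper's $[\tH0]_\epsilon$--$[\tH3]_\epsilon$), establish the trajectory and test-function correspondences (the paper's Remarks \ref{prop:relation_v_v_tilde} and \ref{th:equivalence_between_v_vtilde_2}), identify $n_\epsilon(G^{-1}(z))=J_\epsilon(z)^Te_1$ on $\Gamma$ to match the orientation-dependent interface Hamiltonians, and invoke the existence and comparison results of \cite{oudet2014} on the straightened problem. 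The only imprecision worth flagging: the Lipschitz constant of $D\Phi_\epsilon$ is $O(\|g''\|_\infty/\epsilon)$, so it is not uniform in $\epsilon$ (as the paper records by writing $\tilde L_f(\epsilon)$ in $[\tH0]_\epsilon$); since the theorem is at fixed $\epsilon$, this does not affect the proof, but the word ``uniformly'' should refer to $y$, not $\epsilon$.
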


\subsection{Main result and organization of the paper}
\label{sec:main-result-organ}We now state our main result:   
\begin{theorem}\label{th:convergence_result}
   As $\epsilon\to 0$, $v_\epsilon$ converges uniformly  to $v$ the unique bounded viscosity solution of 
  \begin{eqnarray}
    \label{def:HJeffective1}
    \lambda v(z)+ H^i(z, D v(z))  = 0 & &\hbox{if } z\in \Omega^i,\\
    \label{def:HJeffective2}
    \lambda v(z)+\max\left(E(z_2,\partial_{z_2}v(z)), H_\Ga(z, D v^L(z),D v^R(z)) \right) = 0  & &\hbox{if } z=(0,z_2)\in \Ga,
  \end{eqnarray}
  which we note for short
\begin{equation}\label{def:HJeffective_short}
  \lambda v+{\cal{H}}(x, Dv)=0.
\end{equation}
The Hamiltonians $H^i$, $H_\Gamma$  and $E$ are respectively defined in \eqref{eq:7},  \eqref{def:H_gamma} below, and \eqref{eq:def_E} below. 
\end{theorem}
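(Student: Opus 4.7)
My approach combines the Barles--Perthame half-relaxed limits technique with a perturbed test-function argument in the spirit of Evans, adapted to the geometry near the interface through a single corrector associated to the effective Hamiltonian $E$, following the strategy of \cite{galise:hal-01010512}. I first reduce to the straightened geometry of Section~\ref{sec:stra-geom}: after the diffeomorphism described there, $v_\epsilon$ becomes the value function $\tilde v_\epsilon$ of an equivalent control problem on the fixed partition $\Omega^L \cup \Gamma \cup \Omega^R$, with transformed dynamics and running costs that are $1$-periodic in the fast variable $z_2/\epsilon$ and still discontinuous across $\Gamma$. Assumptions [H0]-[H1] yield $\|\tilde v_\epsilon\|_\infty \le M_\ell/\lambda$ and the controllability [H3] a uniform Lipschitz estimate, so the half-relaxed semi-limits $\bar v(z) := \limsup{}^* \tilde v_\epsilon(z)$ and $\underline v(z) := \liminf{}_* \tilde v_\epsilon(z)$ are finite and locally Lipschitz.

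Away from $\Gamma$ the convergence is classical: on any compact subset of $\Omega^L$ or $\Omega^R$, for $\epsilon$ small enough $\tilde v_\epsilon$ satisfies the standard Hamilton--Jacobi equation $\lambda \tilde v_\epsilon + H^i(z, D\tilde v_\epsilon) = 0$, so stability of viscosity solutions shows that $\bar v$ (resp.\ $\underline v$) is a viscosity subsolution (resp.\ supersolution) of \eqref{def:HJeffective1} in $\Omega^i$.

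The central step is the transmission inequality on $\Gamma$. Fix $z_0 = (0, z_2^0) \in \Gamma$ and consider a test-function $\phi$ with $D\phi(z_0) = (p_1, p_2)$. By the reduced test-function characterization from \cite{imbert:hal-01073954}, it suffices to test against $\phi$ whose tangential slope equals a prescribed $p_2$. I then construct a corrector $w = w_{p_2}$ on the vertical strip around the straightened interface, $1$-periodic in the second variable, satisfying in the viscosity sense a cell problem whose ergodic constant is precisely $E(z_2^0, p_2)$ as defined in \S\ref{sec:new-hamilt-involv}, and having sublinear growth as $|y_1| \to \infty$ (the growth analysis being the content of \S\ref{sec:furth-results-corr}). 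Existence proceeds by a truncate-and-pass-to-the-limit scheme: solve approximate cell problems on bounded pieces of the strip via the standard $\delta w_\delta$ ergodic approximation of Lions--Papanicolaou--Varadhan, extract uniform limits thanks to [H3], and let the truncation parameter go to infinity. The perturbed test-function $\phi_\epsilon(z) := \phi(z) + \epsilon w(z/\epsilon)$, verified to belong to $\cR_\epsilon$, is then plugged into the equation for $\tilde v_\epsilon$; locating near $z_0$ a sequence of extrema $z_\epsilon$ of $\tilde v_\epsilon - \phi_\epsilon$ and passing to the limit recovers $H_\Gamma$ from trajectories instantaneously leaving $\Gamma$ into a half-plane and $E$ from trajectories remaining close to $\Gamma$, combining to give the $\max(E, H_\Gamma)$ appearing in \eqref{def:HJeffective2}.

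Finally, the comparison principle of \S\ref{sec:comp-princ-refd}, applied to the bounded semicontinuous functions $\bar v$ and $\underline v$, forces $\bar v \le v \le \underline v$ on $\R^2$ and therefore $\bar v = \underline v = v$; combined with the uniform $L^\infty$ and Lipschitz bounds, this upgrades to uniform convergence of the whole family $\tilde v_\epsilon$, and pulling back through the straightening diffeomorphism (which is globally bi-Lipschitz and tends to the identity as $\epsilon \to 0$) yields the uniform convergence of $v_\epsilon$ to $v$. The most delicate part is the third step: the corrector lives on an unbounded domain, so neither existence, nor sublinear growth, nor the identification of the ergodic constant with $E$ follows from a direct compactness argument, and the truncation machinery of \S\ref{sec:new-hamilt-involv}--\S\ref{sec:furth-results-corr} is indispensable.
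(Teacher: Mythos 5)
Your overall strategy matches the paper's: half-relaxed limits $\overline{\tilde v}, \underline{\tilde v}$ in the straightened geometry, classical convergence in $\Omega^L\cup\Omega^R$, the reduced set of test-functions from Imbert--Monneau, a single corrector $\chi$ built as a limit of ergodic constants on truncated strips, a perturbed test-function argument, and the comparison principle of \S\ref{sec:comp-princ-refd} to conclude. So the architecture is sound. But there is one concrete step that, as written, would not close.

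You assert that the corrector $w$ ``has sublinear growth as $|y_1|\to\infty$'' and attribute this to \S\ref{sec:furth-results-corr}. This is wrong, and it is not a peripheral slip: in the nontrivial case $E(z_2,p_2)>E_0^i(z_2,p_2)$, the corrector grows \emph{exactly linearly} in $y_1$ with slope $\Pi^i(z_2,p_2)$ on the side $\Omega^i$; this is precisely what Proposition~\ref{SLOPESLemma1} and Corollaries~\ref{cor:slopes_omega}--\ref{cor:control_slopes_W} establish, and Corollary~\ref{cor:control_slopes_W} pins the rescaled limit $W$ between $\overline\Pi^i y_1$ and $\widehat\Pi^i y_1$ on $\Omega^i$. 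This linear growth is the whole mechanism by which the $E$-term emerges. Concretely, the reduced test-function is $\varphi(z)=\psi(z_2)+\bigl(\overline\Pi^R 1_{\Omega^R}+\overline\Pi^L 1_{\Omega^L}\bigr)z_1$, and the perturbed test-function in the paper is
$\varphi^\epsilon(z)=\psi(z_2)+\epsilon\chi\bigl(\bar z_2,\psi'(\bar z_2),z/\epsilon\bigr)$,
so that $\epsilon\chi(z/\epsilon)\to W(z)$ \emph{replaces} the piecewise-linear part of $\varphi$ and, thanks to $W(z)\ge\overline\Pi^i z_1 1_{\Omega^i}$, gives the boundary inequality $\tilde v_\epsilon + K_r \le \varphi^\epsilon$ on $\partial B(\bar z,r)$ needed before the local comparison principle (Theorem~\ref{th: local comparison_ps}) is invoked. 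Your formula $\phi_\epsilon(z)=\phi(z)+\epsilon w(z/\epsilon)$, with $\phi$ already containing the $\Pi^i z_1$ pieces and $w$ supposedly sublinear, either double-counts the slope (if $w$ is actually $\chi$, giving a test-function of slope $\approx 2\Pi^i$) or, if $w$ were genuinely sublinear, would make $\epsilon w(z/\epsilon)\to 0$ so that $\phi_\epsilon\to\phi$ and the corrector carries no information at the boundary, in which case the comparison argument only recovers the inherited Hamiltonian $H_\Gamma$ and misses $E$ altogether. The fix is simply to use the paper's form of the perturbed test-function and to rely on the linear (not sublinear) slope estimates of \S\ref{sec:furth-results-corr}. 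The remaining difference --- you speak of ``locating extrema of $\tilde v_\epsilon-\phi_\epsilon$ and passing to the limit'' whereas the paper argues by contradiction, shows $\varphi^\epsilon$ is a strict supersolution near $\bar z$, and then applies the local comparison principle --- is only a stylistic variant of the perturbed test-function scheme and is not a gap.
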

Let us list the  notions which are needed by Theorem~\ref{th:convergence_result}  and give a few comments:
\begin{enumerate}
\item Problem~(\ref{def:HJeffective_short}) is a transmission problem across the interface $\Gamma$, with the effective transmission condition~(\ref{def:HJeffective2}). The notion of viscosity solutions of (\ref{def:HJeffective_short}) 
is similar to the one proposed in  Definition~\ref{netviscoa}, replacing $\Gamma_\epsilon$ with $\Gamma$. 
\item Note that the Hamilton-Jacobi equations in $\Omega^L$ and $\Omega^R$ are  directly inherited from (\ref{eq:5bis}): this is quite natural, 
since the interface $\Gamma_\epsilon$ oscillates with an  amplitude  of the order of $\epsilon$, which therefore vanishes as $\epsilon\to 0$.
\item  The Hamiltonian  $H_\Gamma$   appearing in the effective transmission condition at the junction is defined in \S~\ref{sec:effect-hamilt-gamma}, precisely in (\ref{def:H_gamma}); it is built by considering only the dynamics related to $ \Omega^i$ which point from $\Gamma$ toward $\Omega^i$, for $i=L,R$.
\item The effective Hamiltonian $E$ is the only ingredient in the effective problem that  keeps track of the oscillations of $\Gamma_\epsilon$, i.e. of the function $g$. It is constructed in \S~\ref{sec:new-hamilt-involv}, see (\ref{eq:def_E}),  as 
the limit of a sequence of ergodic constants related to larger and larger bounded subdomains. This is reminiscent of a construction first performed in \cite{MR3299352} for 
 singularly perturbed problems in optimal control leading to Hamilton-Jacobi equations posed on a network. A similar construction can also be found in \cite{galise:hal-01010512}.
\item For proving  Theorem~\ref{th:convergence_result}, 
 the chosen strategy is reminiscent of  \cite{galise:hal-01010512},
 because it relies on the construction of a single corrector,
 whereas the method proposed in  \cite{MR3299352} requires the construction of an infinite family of correctors.  This will be done in \S~\ref{sec:new-hamilt-involv} and the slopes at infinity of the correctors will be studied in \S~\ref{sec:furth-results-corr}. 
\end{enumerate}

\section{Straightening  the geometry}
\label{sec:stra-geom}
It will be convenient to use a change of variables depending on $\epsilon$ and set the problem in a straightened and fixed geometry.
\subsection{A change of variables}\label{sec:changeofvar}
The following change of variables can be used to write the optimal control problem in a fixed geometry: for $x\in \R ^2$, take $z=G(x)=  \left(
  \begin{array}[c]{c}
    x_1-\epsilon g(\frac{x_2}{\epsilon})\\
x_2
  \end{array}
\right)$. We see that  $G^{-1}(x)=  \left(
  \begin{array}[c]{c}
    x_1+\epsilon g(\frac{x_2}{\epsilon})\\
x_2
  \end{array}
\right)$. The oscillatory interface $\Gamma_\epsilon$ is mapped onto $\Gamma=\{z: z_1=0\}$ by $G$.
 The  Jacobian of $G$ is 
\begin{equation}
\label{jacobian}
J_\epsilon(x)=\left(\begin{matrix}
1 &-g'(\frac{x_2}{\epsilon})\\
0 &1
\end{matrix}\right), 
\end{equation}
and it inverse is 
$
J_\epsilon^{-1}(x)=\left(\begin{matrix}
1 & g'\left(\frac{x_2}{\epsilon} \right)\\
0 & 1
\end{matrix} \right)$.
The following properties will be useful: for any $x\in \R^2$,
\begin{eqnarray}
\label{prop:matrix_norm_J_epsilon_2} J_\epsilon(G^{-1}(x))= J_\epsilon(x) \quad&\hbox{and }&\quad  \displaystyle  J_\epsilon^{-1}(G^{-1}(x))=J^{-1}_\epsilon(x),
\\
\label{prop:matrix_norm_J_epsilon_3}
\sup_{X\in \R^2, |X|\le 1}|J_\epsilon(x)   X| \le \sqrt 2(1+\parallel g' \parallel_\infty)\quad &\hbox{and}&\quad 
\!\!\!\!\!\!\!\! \displaystyle \sup_{X\in \R^2, |X|\le 1}|J_\epsilon^{-1}(x)  X| \le \sqrt 2 (1+\parallel g' \parallel_\infty),
\end{eqnarray}
where $|\cdot|$ stands for the  euclidean norm. Note that (\ref{prop:matrix_norm_J_epsilon_2}) holds because $G$ and $G^{-1}$ leave $x_2$ unchanged, and $J_\epsilon$ only depends on $x_2$.

\subsection{The optimal control problem in the straightened geometry}
\label{subsec:straight_optima_control}
For $i=L,R$, we define the new dynamics $\tilde f_\epsilon^i$ and running costs $\tilde \ell_\epsilon^i$ as 
\begin{eqnarray}
& & \begin{array}{lllllll}
\tilde f_\epsilon^i :& \bar \Omega^i\times A^i &\to & \R^2,\quad \quad
& (z,a) & \mapsto &J_\epsilon(z) f^i\left(G^{-1}(z),a\right),
\end{array}\\
& & \begin{array}{lllllll}
\tilde \ell_\epsilon^i :& \bar \Omega^i\times A^i &\to & \R,\quad \quad
& (z,a) & \mapsto & \ell^i\left(G^{-1}(z),a\right).
\end{array}
\end{eqnarray}
We deduce  the following properties from the standing assumptions $[\rm{H}0]$-$[\rm{H}3]$:
 \begin{description}
\item{$[\tH0]_\epsilon$} For $i=L,R$ and $\epsilon>0$, the function $\tilde f_\epsilon^i$ is continuous and bounded. 
Moreover, there exists  $\tilde L_f(\epsilon)>0 $ and $\tilde M_f>0$ such that 
for any $z,z'\in \bar \Omega^i$ and $a\in A^i$,
\begin{eqnarray*}
&    |\tilde f_\epsilon^i(z,a)-\tilde f_\epsilon^i(z',a)|\le \tilde L_f(\epsilon) |z-z'|,\\
& | \tilde f_\epsilon^i(z,a)| \le \tilde M_f.
\end{eqnarray*}
\item{$[\tH1]_\epsilon$}  For $i=L,R$ and $\epsilon>0$, the function $\tilde \ell_\epsilon^i$ is continuous and bounded. Moreover, 
if we set $\tilde \omega_{\ell}(t)=\omega_{\ell}(\sqrt 2(1+\parallel g' \parallel_\infty)t)$, then for any $z,z'\in \bar \Omega^i$ and $a\in A^i$,
\begin{eqnarray*}
&|\tilde \ell_\epsilon^i(z,a)-\tilde \ell_\epsilon^i(z',a)|\le \tilde \omega_{\ell} (|z-z'|),\\  
&| \tilde \ell_\epsilon^i(x,a)| \le M_\ell,
\end{eqnarray*}
  the constants $M_\ell$ and the modulus of continuity $\omega_\ell(\cdot)$ being introduced in $[\rm{H}1]$.
\item{$[\tH2]_\epsilon$}  For any $i=L,R$, $\epsilon>0$ and $x\in \bar \Omega^i$, the non empty set  $\tilde \FL_\epsilon^i(x)= \{ (\tilde f_\epsilon^i(x,a), \tilde \ell_\epsilon^i(x,a) ) , a\in A^i\}$ is closed and convex. 
\item{$[\tH3]_\epsilon$}  For any $i=L,R$ and $\epsilon>0$, if we set $\tilde \delta_0= \frac{\delta_0}{\sqrt 2(1+\parallel g'\parallel_\infty)}$, then for any  $z\in \Ga$, 
$    B(0,\tilde \delta_0) \subset \tilde F_\epsilon^i(z)=\{\tilde f_\epsilon^i(z,a), a\in A^i\}$.
\end{description}
Properties $[\tH0]_\epsilon$ and $[\tH1]_\epsilon$ result from direct calculations.
Property $[\tH2]_\epsilon$ comes from  the fact that linear maps preserve the convexity property.
 In order to prove $[\tH3]_\epsilon$, take $i=L,R$, $z=(0,z_2)\in \Ga$ and $p\in B(0,\tilde \delta_0)$.
 We look for $a\in A^i$ such that $\tilde f_\epsilon^i(z,a)=p$. Using \eqref{prop:matrix_norm_J_epsilon_3}, we see that 
$
| J_\epsilon^{-1}(z) p|\le  \sqrt 2(1+\parallel g' \parallel_\infty) \tilde \delta_0\le \delta_0
$.
Thus from $[\rm{H}3]$, since $   G^{-1}(z) \in \Ga_\epsilon$, there exists  $\bar a\in A^i$ such that 
$
f^i\left( G^{-1}(z) ,\bar a\right) =J_\epsilon^{-1}(z)p$, and we obtain  that $\tilde f_\epsilon^i(z,\bar a)=p$.
\\
Let us now define the counterparts of $\cM_\epsilon$, $ f_\epsilon$ and $\ell_\epsilon$:
\begin{eqnarray}
  \label{eq: def-M}
  \cM=\left\{(x,a);\; x\in \R^2,\quad  a\in A^i  \hbox{ if } x\in \Omega^i,  \hbox{ and } a     \in A^L\cup A^R  \hbox{ if } x \in \Ga\right \},\\
\label{eq:3}
  \forall (z,a)\in \cM,\quad\quad  \tilde f_\epsilon(z, a)=\left\{
    \begin{array}[c]{ll}
      \tilde f_\epsilon^i(z,a) \quad &\hbox{ if } x\in \Omega^i, \\
      \tilde f_\epsilon^i(z,a) \quad &\hbox{ if } x\in\Ga \hbox{ and }     a\in A^i,
    \end{array}
\right.\\
\forall (z,a)\in \cM,\quad\quad  \tilde \ell_\epsilon(z, a)=\left\{
    \begin{array}[c]{ll}
      \tilde \ell_\epsilon^i(z,a) \quad &\hbox{ if } x\in \Omega^i, \\
      \tilde \ell_\epsilon^i(z,a) \quad &\hbox{ if } x\in\Ga \hbox{ and }     a\in A^i.
    \end{array}
\right.
\end{eqnarray}
For $x\in \R^2$, the set of admissible  trajectories starting from $x$ is 
\begin{equation}
  \label{eq:2tilde}
\widetilde \cT_{x,\epsilon}=\left\{
  \begin{array}[c]{ll}
    ( y_x, a)  \ds \in L_{\rm{loc}}^\infty( \R^+; \cM): \quad   & y_x\in {\rm{Lip}}(\R^+; \R^2), 
 \\  &\ds  y_x(t)=x+\int_0^t \tilde f_\epsilon( y_x(s), a(s)) ds \quad  \forall t\in \R^+ 
  \end{array}\right\}.
\end{equation}
Note that $\forall z\in \R^2$,
$(y_z,a)\in  \widetilde\cT_{z,\epsilon}  \Leftrightarrow \left( G^{-1}(y_z(\cdot)),a\right)\in \cT_{G^{-1}(z),\epsilon}$.
The new optimal control problem consists in finding
\begin{equation}
  \label{eq:4_tilde}
\tilde v_\epsilon(z)= \inf_{( y_z, a)\in \widetilde \cT_{z,\epsilon}}   \int_0^\infty \tilde \ell_\epsilon(y_z(t),a(t)) e^{-\lambda t} dt.
\end{equation}

\begin{remark}[Relationship between $v_\epsilon$ and $\tilde v_\epsilon$]
\label{prop:relation_v_v_tilde} 
 For any $z\in \R^2$,
\begin{displaymath}
\tilde v_\epsilon(z)=v_\epsilon(G^{-1}(z))=v_\epsilon\left(z_1+\epsilon g\left(\frac{z_2}{\epsilon} \right),z_2\right).
\end{displaymath}
\end{remark}
\subsection{The Hamilton-Jacobi equation in the straightened geometry}\label{sec:hamilt-jacobi-equat-1}
\subsubsection{Hamiltonians}
\label{sec:hamiltonians_in_new_variables}
If $i\in \{L,R\}$,  the Hamiltonians $\tilde{H}^i_\epsilon:\R^2\times \R^2\rightarrow \R $ are defined by
\begin{equation}
  \label{eq:7new}
\tilde{H}^i_\epsilon(z,p)=    \max_{a\in A^i} \left(- \tilde f^i_\epsilon(z,a)\cdot  p  -\tilde \ell^i_\epsilon(z,a)\right)      = \max_{a\in A^i} \left(- J_\epsilon(z)f^i(G^{-1}(z),a)\cdot  p  -\ell^i(G^{-1}(z),a)\right).
\end{equation} 
 More explicitly,
 \begin{equation*}
 \tilde{H}^i_\epsilon(z,p)= \max_{a\in A^i} \left(
 -\left(\begin{matrix}
 1 & -g'(\frac{z_2}{\epsilon}) \\
 0 & 1
 \end{matrix}\right)f^i((z_1+\epsilon g(\frac{z_2}{\epsilon}), z_2),a)\cdot
 p  -\ell^i((z_1+\epsilon g(\frac{z_2}{\epsilon}), z_2),a)\right).
 \end{equation*} 
If $z\in \Ga$, the Hamiltonian $\tilde{H}_{ \Ga,\epsilon{}}:\Ga\times \R^2\times \R^2\to \R$ is defined by 
\begin{equation}
  \label{eq:8new}
\tilde{H}_{ \Ga, \epsilon} (z,p^L,p^R)=  \max \left( \;\tilde{H}_{ \Ga,\epsilon}^{+,L}(z,p^L),\tilde{H}_{ \Ga,\epsilon}^{+,R} (z,p^R)\right) ,
\end{equation} 
where for $i=1,2$,       $z\in \R^2$, $p^i\in \R^2$, and $\sigma^i$ is defined in \S~\ref{sec:geometry},
\begin{equation*}
\tilde{H}_{ \Ga,\epsilon}^{+,i} (z,p)=
\max_{\begin{array}{c}
a\in A^i \hbox{ s.t. } \\
\sigma^i  \tilde f^i_\epsilon (z,a)\cdot e_1\ge 0
\end{array}
}  (-\tilde f^i_\epsilon(z,a)\cdot  p -\tilde \ell^i_\epsilon(z,a)).
\end{equation*} 
If $z\in \Gamma$, then
 $n_\epsilon(G^{-1}(z))=J_\epsilon(z)^T e_1$, by \eqref{prop:matrix_norm_J_epsilon_2}. Hence,  $\tilde{H}_{ \Ga,\epsilon}^{+,i}$ is the counterpart of  
${H}_{ \Ga,\epsilon}^{+,i}$.

\subsubsection{Definition of viscosity solutions in  the straightened geometry}\label{sec:defin-visc-solut}
\begin{definition}
\label{def:set_test_function_with_straight_geometry}
The function $\phi : \R^2 \to \R$ is an admissible test-function for the fixed geometry if
$\phi$ is continuous in $\R^2$ and for any $i=L,R$, $\phi|_{\bar \Omega ^i} \in \cC^1(\bar \Omega^i)$.
\\
The set of the admissible test-functions is denoted $\cR$. If $\phi \in \cR$, $x\in \Ga$ and $i\in \{L,R\}$, we set
$D\phi^i(x)= \lim_{\overset{ x'\to x}{x'\in \Omega^i}}D\phi(x')$.
Of course,  the partial derivatives of $\phi|_{\bar \Omega^L}$ and $\phi|_{\bar \Omega^R}$ with respect to $x_2$ coincide on $\Gamma$.
\end{definition}
\noindent
We then define the sub/super-solutions and solutions of
\begin{equation}\label{HJaepsilontilde}
    \lambda u+{\tilde{\cal{H}}}_\epsilon(z, Du)=0
\end{equation}
as in Definition \ref{netviscoa}, using the set of test-functions $\cR$, the Hamiltonians $\tilde{H}^i_\epsilon(z,p)$ if $z\in\Omega^i$ and $\tilde{H}_{ \Ga,\epsilon}(z,p^L,p^R)$ if $z\in \Ga$. 

\begin{remark}
\label{th:equivalence_between_v_vtilde_2}
Let $u :\R^2 \to \R$ be an upper semi-continuous (resp. lower semi-continuous) function and $\tilde u :\R^2 \to \R$ be defined by $\tilde u (z) =u(G^{-1}(z))$. Then $u$ is a subsolution (resp. supersolution) of \eqref{HJaepsilon} if and only if $\tilde u$ is a subsolution (resp. supersolution) of \eqref{HJaepsilontilde}.
\end{remark}

\subsubsection{Existence and uniqueness}
\label{sec:existence-uniqueness}
We have seen in Remark \ref{prop:relation_v_v_tilde} that the optimal control problems (\ref{eq:4}) and (\ref{eq:4_tilde}) are equivalent; similarly Remark \ref{th:equivalence_between_v_vtilde_2} tells us that the notions of viscosity solutions of (\ref{HJaepsilon}) and (\ref{HJaepsilontilde}) are equivalent. Therefore, it is enough to focus on (\ref{eq:4_tilde})  and (\ref{HJaepsilontilde}).

\begin{lemma}
   \label{sec:prop-visc-sub-3}
  There exists $r>0$ such that any bounded  viscosity subsolution $u$ of \eqref{HJaepsilon} (resp. \eqref{HJaepsilontilde})  is Lipschitz continuous in $B(\Ga_\epsilon, r)$ (resp. $B(\Ga, r)$) with Lipschitz constant $L_u\le \frac{\lambda \parallel u \parallel_\infty+M_\ell}{\delta_0}$ (resp. $L_u\le \sqrt 2\frac{(\lambda \parallel u \parallel_\infty+M_\ell)(1+\parallel g'\parallel_\infty)}{\delta_0}$), where for $X$ a closed subset of $\R^2$, $B(X, r)$ denotes the set $\{y\in \R^2: dist(y, X)<r\}$.
 \end{lemma}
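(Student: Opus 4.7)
The plan is to upgrade the strong controllability hypothesis $[\rm{H}3]$ into a uniform coercivity estimate for the Hamiltonians $H^L$, $H^R$, and $H_{\Gamma_\epsilon}$ in a tubular neighborhood of $\Gamma_\epsilon$, and then to combine this with the subsolution inequalities through a cone-comparison argument.

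The first step is to extend controllability off the interface: since each $f^i(\cdot,a)$ is $L_f$-Lipschitz in $x$ uniformly in $a\in A^i$, the closed convex set $F^i(x)$ varies Lipschitz-continuously in $x$, so for any $\delta_0' < \delta_0$ there is an $r > 0$ such that $B(0,\delta_0')\subset F^i(x)$ for every $x \in \overline{\Omega^i_\epsilon}\cap B(\Gamma_\epsilon, r)$. Choosing, for a prescribed $p\in\R^2\setminus\{0\}$, an $a\in A^i$ realizing $f^i(x,a)=-\delta_0' p/|p|$ yields the coercivity bound $H^i(x,p)\ge \delta_0'|p|-M_\ell$ in this neighborhood. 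A crucial observation is that the same estimate survives passage to the junction Hamiltonian $H_{\Gamma_\epsilon}(y,p,p) = \max\bigl(H^{+,L}_{\Gamma_\epsilon}(y,p), H^{+,R}_{\Gamma_\epsilon}(y,p)\bigr)$ when $y\in\Gamma_\epsilon$: for any $p$ one can pick the side $i\in\{L,R\}$ so that $\sigma^i p\cdot n_\epsilon(y)\le 0$, and the extremal dynamics $f^i(y,a)=-\delta_0' p/|p|$ then automatically satisfies the inflow constraint $\sigma^i f^i(y,a)\cdot n_\epsilon(y) \ge 0$, so that the full coercivity $\delta_0'|p|-M_\ell$ is recovered also at the interface.

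The Lipschitz property is then extracted by a cone comparison. Fix $x_0 \in B(\Gamma_\epsilon,r/2)$, pick any $L > (\lambda\|u\|_\infty + M_\ell)/\delta_0'$, and for a radius $\rho\in(0,r/2]$ examine the upper semi-continuous function $\Phi(x) = u(x) - u(x_0) - L|x-x_0|$ on $\overline{B(x_0,\rho)}$. Choose $\rho$ large enough that $L\rho > 2\|u\|_\infty$, so that $\Phi < 0$ on $\partial B(x_0,\rho)$; a strict positive supremum of $\Phi$ would then have to be attained at some interior point $y_0\ne x_0$. At such a $y_0$ the cone $\phi(x)=u(x_0)+L|x-x_0|$ is smooth and belongs to $\cR_\epsilon$, with $|D\phi(y_0)|=L$ and $D\phi^L(y_0)=D\phi^R(y_0)$ if $y_0\in\Gamma_\epsilon$; plugging this into the subsolution inequality and combining with the coercivity bound of the previous paragraph forces $\delta_0' L \le \lambda\|u\|_\infty + M_\ell$, contradicting the choice of $L$. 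Hence $\Phi\le 0$ on $B(x_0,\rho)$, and sending $L\downarrow (\lambda\|u\|_\infty+M_\ell)/\delta_0'$ and $\delta_0'\uparrow\delta_0$ yields the announced Lipschitz constant in a neighborhood of $\Gamma_\epsilon$. The straightened statement is handled identically, using the controllability constant $\tilde\delta_0=\delta_0/\bigl(\sqrt 2\,(1+\|g'\|_\infty)\bigr)$ from $[\tH3]_\epsilon$ in place of $\delta_0$.

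The main obstacle is the case $y_0\in\Gamma_\epsilon$, where the subsolution inequality involves $H_{\Gamma_\epsilon}$ rather than a one-sided Hamiltonian; the side-selection observation above is precisely what lets the cone comparison go through uniformly in this case, regardless of the sign of $D\phi(y_0)\cdot n_\epsilon(y_0)$. A secondary technical point is the choice of $\rho$: it must lie in $\bigl(2\|u\|_\infty/L,\,r/2\bigr]$, an interval which is non-empty as soon as $r\geq 4\|u\|_\infty/L$; since $L$ is of order $(\lambda\|u\|_\infty+M_\ell)/\delta_0$ the ratio $\|u\|_\infty/L$ is bounded by $\delta_0/\lambda$, so the required lower bound on $r$ is uniform over all bounded subsolutions $u$ and compatible with the upper bound imposed by the extension of controllability.
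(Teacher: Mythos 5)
Your proof takes a genuinely different route from the paper's: the paper disposes of the straightened case by citing \cite[Lemma~2.6]{oudet2014} and then transports the estimate to the oscillatory geometry through $u=\tilde u\circ G$, whereas you try to reprove the result from scratch by a cone comparison. The structure of your argument (extend $[\rm{H}3]$ to a tubular neighborhood, derive coercivity of $H^i$ and $H_{\Gamma_\epsilon}$ there, then run a cone comparison and absorb the junction Hamiltonian by the side-selection trick) is sound in spirit, and the side-selection observation at $\Gamma_\epsilon$ is correct. However, there is a genuine gap in the localization step.

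The difficulty is that your two constraints on $\rho$ pull in opposite directions in a way your final sentence sweeps under the rug. You need $L\rho>2\|u\|_\infty$ to force the maximum of $\Phi$ into the interior, hence $\rho>2\|u\|_\infty/L$, and you need $\rho\le r/2$ so that $B(x_0,\rho)\subset B(\Gamma_\epsilon,r)$. Combined, $r>4\|u\|_\infty/L$. On the other hand, extending $[\rm{H}3]$ off the curve via the Lipschitz bound on $f^i$ only gives $B(0,\delta_0')\subset F^i(x)$ in the tube with $\delta_0'\le\delta_0-L_f r$. Substituting $L\le(\lambda\|u\|_\infty+M_\ell)/\delta_0'+o(1)$ shows that $r$ is bounded below by a positive quantity depending on $\|u\|_\infty$, $L_f$, $\lambda$, $M_\ell$, $\delta_0$, and therefore $\delta_0'$ is bounded away from $\delta_0$: the final step ``sending $L\downarrow(\lambda\|u\|_\infty+M_\ell)/\delta_0'$ and $\delta_0'\uparrow\delta_0$'' cannot be carried out, because $\delta_0'\uparrow\delta_0$ forces $r\downarrow0$, which contradicts $\rho>2\|u\|_\infty/L$ and $\rho\le r/2$. (For example with $\|u\|_\infty=1$, $\lambda=1$, $M_\ell=0$, $\delta_0=1$, $L_f=1$, your constraints force $r>4/5$ and hence $L>5$, while the target constant is $1$.) The standard repair is to replace the bare cone by $\phi(x)=u(x_0)+L|x-x_0|+C|x-x_0|^2$ with $C\ge(2\|u\|_\infty-L\rho)/\rho^2$, which keeps $|D\phi|\ge L$ at the maximum point but makes $\phi$ dominate $u$ on $\partial B(x_0,\rho)$ for $\rho$ arbitrarily small; this decouples $\rho$ from $\|u\|_\infty/L$ and lets $r$ (hence $L_f r$) be taken as small as one pleases. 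Even so, you only reach $L_u\le(\lambda\|u\|_\infty+M_\ell)/(\delta_0-L_f r)$ for $r>0$ fixed; that this can be reconciled with the exact constant in the lemma is precisely what is outsourced to \cite{oudet2014} in the paper's proof. A secondary omission is the chaining step needed to pass from the local estimate $|u(x)-u(x_0)|\le L|x-x_0|$ for $|x-x_0|<\rho$ to a global Lipschitz bound on the whole tube.
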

 \begin{proof}
For  a subsolution $u$ of \eqref{HJaepsilontilde},  the result is exactly \cite[Lemma~2.6]{oudet2014}.\\
If $u$ is a subsolution of \eqref{HJaepsilon}, then $\tilde u (z) =u(G^{-1}(z))$ is a subsolution of \eqref{HJaepsilontilde}, 
 and is therefore Lipschitz continuous in a neighborhood of $\Ga$. Since $u =\tilde u  \circ G $,  $u$ is Lipschitz continuous in a neighborhood of $\Ga_\epsilon$.
\end{proof}

\begin{theorem}[Local comparison principle]
\label{th: local comparison_ps} 
  Let $u$ be a bounded viscosity subsolution of \eqref{HJaepsilon} (resp. \eqref{HJaepsilontilde}),
 and $v$ be a  bounded viscosity supersolution of \eqref{HJaepsilon} (resp. \eqref{HJaepsilontilde}). For any $z\in \R^2$, there exists $r>0$ such that
\begin{equation}
\label{eq: lem local comparison}
\parallel (u-v)_+ \parallel_{L^\infty(B(z,r))} \le \parallel (u-v)_+ \parallel_{L^\infty(\partial B(z,r))}.
\end{equation}
\end{theorem}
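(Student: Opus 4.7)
The plan is to reduce to the straightened geometry via Remark \ref{th:equivalence_between_v_vtilde_2} and then to argue by the standard doubling of variables, with special care at the interface. Since $G$ and $G^{-1}$ are bi-Lipschitz with constants depending only on $\|g'\|_\infty$, if $\tilde u := u \circ G^{-1}$ and $\tilde v := v \circ G^{-1}$ satisfy \eqref{eq: lem local comparison} on a ball $B(G(z),r')$ relative to \eqref{HJaepsilontilde}, then $u, v$ satisfy the analogous inequality on some ball $B(z,r)$ relative to \eqref{HJaepsilon}. Hence it suffices to prove the statement for \eqref{HJaepsilontilde}.

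If $z \notin \Gamma$, I would pick $r>0$ so small that $\overline{B(z,r)} \subset \Omega^i$ for exactly one $i \in \{L,R\}$. On $B(z,r)$ equation \eqref{HJaepsilontilde} is a classical Hamilton-Jacobi equation $\lambda u + \tilde H^i_\epsilon(z,Du)=0$ with a continuous and coercive (thanks to $[\tilde{\mathrm{H}}3]_\epsilon$) Hamiltonian satisfying Barles's standard hypotheses; the textbook doubling of variables immediately gives \eqref{eq: lem local comparison}.

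The interesting case is $z \in \Gamma$. I would choose $r>0$ small enough that $\overline{B(z,r)} \subset B(\Gamma,r_0)$, with $r_0$ as in Lemma \ref{sec:prop-visc-sub-3}; thus $u$ is Lipschitz in $\overline{B(z,r)}$ with a constant depending only on $\|u\|_\infty$, $M_\ell$, $\lambda$, $\delta_0$ and $\|g'\|_\infty$. Arguing by contradiction, suppose $M := \max_{\overline{B(z,r)}}(u-v) > \max_{\partial B(z,r)}(u-v)_+$. I would introduce the usual penalization
\begin{equation*}
\Phi_\eta(x,y) \;=\; u(x) - v(y) - \frac{|x-y|^2}{\eta^2} - \kappa\,|x-z|^2,
\end{equation*}
with $\kappa>0$ small enough that $\max \Phi_\eta$ is attained at an interior point $(x_\eta,y_\eta) \in B(z,r) \times B(z,r)$. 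Standard arguments yield, up to a subsequence, $x_\eta,y_\eta \to \bar z \in B(z,r)$ with $(u-v)(\bar z)=M$ and $|x_\eta-y_\eta|/\eta \to 0$.

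The main obstacle is the subcase $\bar z \in \Gamma$, since for fixed $\eta>0$ the doubled points $x_\eta$ and $y_\eta$ may lie on different sides of $\Gamma$, so the standard viscosity inequalities cannot be played off against each other directly. To handle this, I would mimic the proof of the global comparison principle of \cite{oudet2014}, now localized by the cut-off $\kappa|x-z|^2$. The controllability assumption $[\tilde{\mathrm{H}}3]_\epsilon$ furnishes dynamics of both signs through $\Gamma$, which, together with the reduced set of test functions of \cite{imbert:hal-01073954}, allows one to apply the subsolution inequality for $u$ and the supersolution inequality for $v$ against a single pair of admissible gradients at $\bar z$. Exploiting the Lipschitz bound on $u$ from Lemma \ref{sec:prop-visc-sub-3} to pass to the limit $\eta \to 0$ and then $\kappa \to 0$, one arrives at $\lambda M \le 0$, contradicting $M>0$. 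This yields \eqref{eq: lem local comparison} and completes the proof.
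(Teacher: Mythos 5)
Your reduction to the straightened geometry and the interior case ($z\notin\Gamma$) match the paper, but at the only genuinely delicate point — $z\in\Gamma$ — your argument has a real gap. The paper does not re-prove anything there: it invokes the local comparison result of \cite[Theorem~3.3]{oudet2014}, whose hypotheses are checked via $[\tH0]_\epsilon$--$[\tH3]_\epsilon$. You instead launch the classical doubling of variables with the penalization $|x-y|^2/\eta^2+\kappa|x-z|^2$, correctly identify that the problem is the configuration where $x_\eta$ and $y_\eta$ lie on opposite sides of $\Gamma$ (or on $\Gamma$), and then resolve it by asserting that controllability, the reduced test functions of \cite{imbert:hal-01073954} and the Lipschitz bound of Lemma~\ref{sec:prop-visc-sub-3} ``allow one to apply the subsolution and supersolution inequalities against a single pair of admissible gradients.'' That sentence is precisely the missing proof. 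The naive quadratic penalization is known to fail across the interface because $H_{\Gamma}$-type junction conditions are not continuous in $x$ in any sense compatible with the standard $|x-y|^2/\eta^2$ estimates; this is exactly why \cite{oudet2014} (following \cite{barles2011bellman,barles2013bellman}) proves comparison by control-theoretic arguments (sub-/super-dynamic programming along trajectories crossing or staying on the interface), and why Imbert--Monneau replace $|x-y|^2$ by a specially built vertex test function. The reduced set of test functions (Theorem~\ref{th:restriction_set_of_test_functions} here) characterizes sub/supersolutions at $\Gamma$; it does not, by itself, repair the doubled-variable inequalities, and ``mimic the proof of the global comparison principle of \cite{oudet2014}, localized'' is circular in spirit, since that global result is itself built on the local one you are trying to prove.

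Two smaller points. First, your transfer step is stated too quickly: if comparison holds on a ball $B(G(z),r')$ for \eqref{HJaepsilontilde}, what you get for \eqref{HJaepsilon} is the maximum principle on the distorted domain $G^{-1}(B(G(z),r'))$, and the inequality on a larger domain does not pass to an inscribed Euclidean ball for a fixed pair $(u,v)$; one must either run the comparison argument on the family of images $G(B(z,r))$ or apply the result of \cite{oudet2014} directly in the geometry at hand. (The paper is terse here too, but the fix is immediate once one quotes \cite[Theorem~3.3]{oudet2014} rather than a ball-specific statement.) Second, in the interior case the coercivity you attribute to $[\tH3]_\epsilon$ is neither available (controllability is assumed only on the interface) nor needed: the classical comparison for $\lambda u+\tilde H^i_\epsilon(z,Du)=0$ uses only the Lipschitz-in-$z$ and Lipschitz-in-$p$ structure from $[\tH0]_\epsilon$--$[\tH1]_\epsilon$.
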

\begin{proof}
Let us focus on \eqref{HJaepsilontilde}. If $z\in \Omega^i$,  then we can choose $r>0$ small enough so that $B(z,r)\subset \Omega^i$ and the result is classical.
 If $z\in \Ga$,  the result stems from a direct application of \cite[Theorem~3.3]{oudet2014}. Indeed,
all the  assumptions required by  \cite[Theorem~3.3]{oudet2014} are satisfied thanks to  the properties 
 $[\widetilde{\rm{H}}0]_\epsilon$-$[\widetilde {\rm{H}}3]_\epsilon$. The result for \eqref{HJaepsilon} can be deduced from the latter thanks to Remark \ref{th:equivalence_between_v_vtilde_2}.
\end{proof}

\begin{theorem}[Global comparison principle]
\label{sec:comparison-principle-1}
 Let $u$ be a bounded viscosity subsolution of \eqref{HJaepsilon} (resp. \eqref{HJaepsilontilde}),
 and $v$ be a  bounded viscosity supersolution of \eqref{HJaepsilon} (resp. \eqref{HJaepsilontilde}). 
Then $u\le v$.
\end{theorem}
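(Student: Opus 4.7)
The plan is to reduce the global comparison to the local comparison principle Theorem~\ref{th: local comparison_ps} via a coercive penalization at infinity. By Remark~\ref{th:equivalence_between_v_vtilde_2} I would work with \eqref{HJaepsilontilde}, and argue by contradiction: suppose $M := \sup_{\R^2}(u-v) > 0$. Pick a smooth coercive function $\psi(z) = K + \sqrt{1+|z|^2}$ with $K$ large enough that $\lambda \psi(z) - \tilde M_f |D\psi(z)| \ge c_0 > 0$ on $\R^2$; this is feasible since $|D\psi|\le 1$ and $\psi\ge K$, so that any $K\ge (\tilde M_f+1)/\lambda$ works.

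Then I would verify, via a direct test-function computation, that $v_\alpha := v + \alpha\psi$ is a viscosity supersolution of the strict inequality
$$
\lambda v_\alpha(z) + \tilde{\mathcal H}_\epsilon(z, D v_\alpha(z)) \ge \alpha c_0 \qquad \text{on } \R^2.
$$
The ingredients are the Lipschitz estimates of $\tilde H^i_\epsilon(z,\cdot)$ and $\tilde H_{\Gamma,\epsilon}^{+,i}(z,\cdot)$ in $p$ with constant $\tilde M_f$, given by $[\tH0]_\epsilon$. Since $\tilde H_{\Gamma,\epsilon}$ is a max of two Lipschitz quantities with the same constant, it is itself Lipschitz in its $p$-arguments, so that if $\phi\in\cR$ touches $v_\alpha$ from below at $z\in\Gamma$, then $\phi-\alpha\psi\in\cR$ touches $v$ from below; applying the supersolution inequality for $v$ and absorbing the $\alpha D\psi$ term via the Lipschitz bound produces the $\alpha c_0$ margin, both on $\Omega^L\cup\Omega^R$ and on $\Gamma$.

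The coercivity of $\psi$ together with the boundedness of $u,v$ forces $u-v_\alpha\to-\infty$ as $|z|\to\infty$. For $\alpha$ small enough, picking $z_0$ with $(u-v)(z_0)\ge 3M/4$ shows $\sup(u-v_\alpha)\ge M/2 > 0$, so this supremum is attained at some $z_\alpha\in\R^2$. To conclude I would invoke a strict version of Theorem~\ref{th: local comparison_ps}: since $v_\alpha$ is a strict supersolution with margin $\alpha c_0$, the local comparison at $z_\alpha$ should yield
$$
(u-v_\alpha)(z_\alpha)\le \sup_{\partial B(z_\alpha,r)}(u-v_\alpha)_+ - \frac{\alpha c_0}{\lambda}\le (u-v_\alpha)(z_\alpha) - \frac{\alpha c_0}{\lambda},
$$
which contradicts $\alpha c_0 > 0$. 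The main obstacle will be proving this strict version: it requires redoing the doubling-of-variables argument of \cite[Theorem~3.3]{oudet2014} while tracking the extra $\alpha c_0$ margin, paying careful attention on $\Gamma$ to the test functions and to the maximum defining $\tilde H_{\Gamma,\epsilon}$ over $i\in\{L,R\}$, so that the strict constant survives the passage to the sup.
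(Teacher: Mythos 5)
The paper's own proof of this theorem is a one--liner: it invokes the global comparison result \cite[Theorem~3.4]{oudet2014} for the straightened problem \eqref{HJaepsilontilde}, having already verified that the hypotheses $[\tH0]_\epsilon$--$[\tH3]_\epsilon$ hold, and then transports the result back to \eqref{HJaepsilon} via Remark~\ref{th:equivalence_between_v_vtilde_2}. Your route is genuinely different: you aim to \emph{rederive} the global statement from the local one (Theorem~\ref{th: local comparison_ps}, itself a citation of \cite[Theorem~3.3]{oudet2014}) by penalizing at infinity. This is a legitimate and standard reduction, and the first half of your argument is correct: $v_\alpha = v+\alpha\psi$ is indeed a strict supersolution with margin $\alpha c_0$, because $\psi$ is smooth so $\phi-\alpha\psi$ is an admissible test function whenever $\phi$ is, and all the Hamiltonians $\tilde H^i_\epsilon(z,\cdot)$, $\tilde H^{\pm,i}_{\Gamma,\epsilon}(z,\cdot)$ and hence their maximum $\tilde H_{\Gamma,\epsilon}(z,\cdot,\cdot)$ are $\tilde M_f$--Lipschitz in $p$. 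The coercivity argument yielding an interior maximizer $z_\alpha$ with $(u-v_\alpha)(z_\alpha)\ge M/2>0$ is also fine.

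The gap is in the final step, and you have honestly flagged it yourself: Theorem~\ref{th: local comparison_ps}, as stated, only yields the non-strict estimate $\|(u-v_\alpha)_+\|_{L^\infty(B(z_\alpha,r))}\le \|(u-v_\alpha)_+\|_{L^\infty(\partial B(z_\alpha,r))}$, and since $z_\alpha$ is a global maximizer both sides equal $M_\alpha$, so no contradiction appears. The ``strict version'' you posit is not in the paper, and its proof would require re-running the doubling-of-variables argument of \cite[Theorem~3.3]{oudet2014} in full, which is exactly the work that the paper sidesteps by citing \cite[Theorem~3.4]{oudet2014} directly. Moreover the inequality you write, $(u-v_\alpha)(z_\alpha)\le \sup_{\partial B(z_\alpha,r)}(u-v_\alpha)_+ - \alpha c_0/\lambda$, is not the natural form (for a boundary maximizer no such margin should appear); the correct dichotomy from doubling is ``either the maximum is attained on $\partial B$, or the interior maximum satisfies $(u-v_\alpha)\le -\alpha c_0/\lambda<0$,'' which contradicts $M_\alpha>0$ directly. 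A cleaner way to close the gap \emph{without} a strict version of Theorem~\ref{th: local comparison_ps} is to break the tie locally: for $\beta>0$ small, $w_\beta(z)=v_\alpha(z)+\beta|z-z_\alpha|^2$ is still a supersolution in $B(z_\alpha,r)$ (the margin $\alpha c_0$ absorbs the extra $2\tilde M_f\beta r$), while $(u-w_\beta)(z_\alpha)=M_\alpha$ and $\sup_{\partial B(z_\alpha,r)}(u-w_\beta)_+\le M_\alpha-\beta r^2$; applying the (weak) Theorem~\ref{th: local comparison_ps} to $(u,w_\beta)$ then gives $M_\alpha\le M_\alpha-\beta r^2$, a contradiction. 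In either case the labor you avoid by not opening \cite{oudet2014} reappears in one form or another, which is why the paper simply cites the ready-made global comparison theorem.
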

\begin{proof}
The result for equation \eqref{HJaepsilontilde} stems from a direct application of \cite[Theorem~3.4]{oudet2014}. 
Then we deduce the result for  equation \eqref{HJaepsilon} thanks to Remark \ref{th:equivalence_between_v_vtilde_2}.
\end{proof}

\begin{theorem}
  \label{existence-epsilon}
The value function $v_\epsilon$ (resp. $\tilde v_\epsilon$) defined in \eqref{eq:4} (resp. \eqref{eq:4_tilde}) is the unique bounded  viscosity solution of \eqref{HJaepsilon} (resp. \eqref{HJaepsilontilde}).
\end{theorem}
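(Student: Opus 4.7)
The plan is to prove the statement for $\tilde v_\epsilon$ first and then transfer the result to $v_\epsilon$ via the change of variables established in \S\ref{sec:stra-geom}. Uniqueness is free: any two bounded viscosity solutions of \eqref{HJaepsilontilde} are each a sub- and a supersolution, so Theorem~\ref{sec:comparison-principle-1} applied twice yields equality, and the same argument works for \eqref{HJaepsilon}. Hence the core of the proof is the existence part, i.e. showing that $\tilde v_\epsilon$, defined in \eqref{eq:4_tilde}, is a viscosity solution of \eqref{HJaepsilontilde} in the sense of Definition~\ref{netviscoa} (with $\cR$, $\tilde H^i_\epsilon$, $\tilde H_{\Ga,\epsilon}$).

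The first step is to check that the straightened optimal control problem \eqref{eq:4_tilde} falls exactly within the framework of \cite{oudet2014}: we have verified in \S\ref{subsec:straight_optima_control} that the assumptions $[\tH0]_\epsilon$--$[\tH3]_\epsilon$ hold, so in particular the new dynamics are Lipschitz (for fixed $\epsilon$) and bounded, the running costs are uniformly continuous and bounded, the sets $\tilde{\FL}^i_\epsilon(z)$ are closed and convex, and a uniform controllability ball of radius $\tilde\delta_0$ is available at every point of $\Gamma$. These are precisely the hypotheses under which \cite{oudet2014} characterises the value function of an infinite horizon optimal control problem with discontinuities on a flat interface $\Gamma$, with no mixing at the interface, as the unique bounded viscosity solution of the associated Hamilton-Jacobi transmission problem. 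I would invoke the corresponding existence theorem of \cite{oudet2014} to conclude that $\tilde v_\epsilon$ is a bounded viscosity solution of \eqref{HJaepsilontilde}: boundedness follows from the bound $\|\tilde v_\epsilon\|_\infty\le M_\ell/\lambda$; continuity has been stated in Proposition~\ref{sec:assumption} for $v_\epsilon$ and transfers to $\tilde v_\epsilon$ through $G$; the subsolution property at interior points follows from the dynamic programming principle combined with the controllability from $[\tH3]_\epsilon$ (which produces admissible trajectories leaving any neighborhood of a test point); and the supersolution property on $\Gamma$ follows from the maximisation in \eqref{eq:8new} restricted to dynamics pointing into $\Omega^i$, which is exactly the structure of the admissible trajectories at a point of $\Gamma$ in \eqref{eq:2tilde} (no mixing is allowed).

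The second step is the transfer to $v_\epsilon$. By Remark~\ref{prop:relation_v_v_tilde}, $\tilde v_\epsilon(z)=v_\epsilon(G^{-1}(z))$, so $v_\epsilon=\tilde v_\epsilon\circ G$. Since $G$ is a $\mathcal{C}^1$-diffeomorphism that maps $\Omega^i$ onto $\Omega^i_\epsilon$ and $\Gamma$ onto $\Gamma_\epsilon$, and since by construction the Hamiltonians $\tilde H^i_\epsilon$, $\tilde H_{\Gamma,\epsilon}$ are precisely the pullbacks through $G$ of $H^i$, $H_{\Gamma_\epsilon}$ (this is the content of the identities displayed in \S\ref{sec:hamiltonians_in_new_variables}, using $n_\epsilon(G^{-1}(z))=J_\epsilon(z)^T e_1$), Remark~\ref{th:equivalence_between_v_vtilde_2} applies: $\tilde v_\epsilon$ being a viscosity solution of \eqref{HJaepsilontilde} is equivalent to $v_\epsilon$ being a viscosity solution of \eqref{HJaepsilon}. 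This closes the existence part for $v_\epsilon$, and as already noted, uniqueness follows from Theorem~\ref{sec:comparison-principle-1}.

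The main technical obstacle is not in the uniqueness (which is handed to us by the comparison principle) nor in the transfer between the two geometries (which is essentially bookkeeping once Remark~\ref{th:equivalence_between_v_vtilde_2} is accepted), but in the interface part of the subsolution/supersolution verification, namely showing that $\tilde v_\epsilon$ satisfies the correct Hamilton-Jacobi inequalities on $\Gamma$ with the Hamiltonian $\tilde H_{\Gamma,\epsilon}$ built only from inward-pointing dynamics. In practice this is dealt with by carefully combining the dynamic programming principle with the controllability hypothesis $[\tH3]_\epsilon$ to construct admissible trajectories starting on $\Gamma$ with a prescribed inward velocity, and this is exactly the content of the existence theorem of \cite{oudet2014} that we invoke.
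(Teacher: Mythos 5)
Your proof follows the same route as the paper: uniqueness from the global comparison principle (Theorem~\ref{sec:comparison-principle-1}), existence for $\tilde v_\epsilon$ by verifying $[\tH0]_\epsilon$--$[\tH3]_\epsilon$ and invoking the existence result of \cite{oudet2014} (the paper cites its Theorem~2.3), and transfer to $v_\epsilon$ via Remark~\ref{th:equivalence_between_v_vtilde_2}. The extra detail you supply about the dynamic programming / controllability argument underlying the interface inequalities is consistent with what \cite{oudet2014} does, so this is the same proof with a bit more exposition.
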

\begin{proof}
Uniqueness is a direct consequence of  Theorem \ref{sec:comparison-principle-1} for both equations \eqref{HJaepsilon} and \eqref{HJaepsilontilde}. Existence  for  equation \eqref{HJaepsilontilde} is proved in the same way as in \cite[Theorem~2.3]{oudet2014}. Then, existence for \eqref{HJaepsilon} is deduced  from Remark \ref{th:equivalence_between_v_vtilde_2}.
\end{proof}
\begin{remark}
Under suitable assumptions, see \S 4 in \cite{oudet2014},
all the above results hold if we modify \eqref{HJaepsilon} (resp.\eqref{HJaepsilontilde}) by 
adding to  $H_{\Ga_\epsilon}$ (resp. $\tilde H_{\Ga,\epsilon}$)  a  Hamiltonian $H^0_\epsilon$  (resp. $\tilde H^0_\epsilon$) 
correponding to trajectories staying on the junctions.
\end{remark}

\section{Asymptotic behavior in $\Omega^L$ and $\Omega^R$}
\label{sec:asymptotic_behavior}
Our goal is to understand the asymptotic behavior of the sequence $\left(v_\epsilon\right)_\epsilon$ as $\epsilon$ tends to $0$. 
In this section, we are going to see that the Hamilton-Jacobi equations remain unchanged in $\Omega^L$ and $\Omega^R$; this is 
not surprising  because the amplitude of the oscillations of the interface vanishes as $\epsilon\to 0$. Then, we are going to introduce some of the ingredients of the effective boundary conditions on $\Gamma$.
\\
From Remark \ref{prop:relation_v_v_tilde}, the sequence $\left(v_\epsilon\right)_\epsilon$ converges if and only if the sequence $\left(\tilde v_\epsilon\right)_\epsilon$ converges. Moreover, if they converge, the two sequences have the same limit. It will be convenient to focus on  the asymptotic behavior of the sequence $\left(\tilde v_\epsilon\right)_\epsilon$, since the geometry is fixed. It is now classical to consider the relaxed semi-limits
 \begin{equation}
 \label{def:v_tilde_overline_underline}
 \overline{\tilde{v}}(z)={\limsup_\epsilon}^{*} \tilde{v}_\epsilon(z)=\limsup_{z'\to z, \epsilon\to 0}\tilde{v}_\epsilon(z') \quad \mbox{ and } \quad \underline{\tilde{v}}(z)=\underset{\epsilon}{{\liminf}_{*}}\tilde{v}_\epsilon(z)=\liminf_{z'\to z, \epsilon\to 0}\tilde{v}_\epsilon(z').
 \end{equation}
Note that  $ \overline{\tilde{v}}$ and $ \underline{\tilde{v}}$ are well defined, since  $\left(\tilde v_\epsilon\right)_\epsilon$ is uniformly bounded by $\frac{M_\ell}{\lambda}$, see \eqref{eq:4_tilde}.
\subsection{For the Hamilton-Jacobi equations in   $\Omega^L$ and $\Omega^R$, nothing changes}
\label{sec:hamilt-jacobi-equat-2}
\begin{proposition}
  \label{supersub}
 For $i=L,R$, the functions $\overline{\tilde{v}}(z)$ and $\underline{\tilde{v}}(z)$ are respectively a bounded subsolution and a bounded supersolution in $\Omega^i$ of
\begin{equation}
  \label{eq:5bisnew}
\l u(z)+H^i(z,Du(z))= 0,
\end{equation}
where the Hamiltonian $H^i$ is given by \eqref{eq:7}.
\end{proposition}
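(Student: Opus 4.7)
The plan is to transfer everything back to the original non-straightened variables, where the Hamilton-Jacobi equation has a Hamiltonian independent of $\epsilon$ as soon as one is away from the interface, and then invoke the classical Barles–Perthame stability result for viscosity solutions.

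First, I would translate the relaxed semi-limits of $\tilde v_\epsilon$ into those of $v_\epsilon$. Setting $\overline v = {\limsup_\epsilon}^* v_\epsilon$ and $\underline v = \underset{\epsilon}{{\liminf}_*} v_\epsilon$, I claim that $\overline{\tilde v}(z)=\overline v(z)$ and $\underline{\tilde v}(z)=\underline v(z)$ for every $z\in \Omega^L\cup \Omega^R$. Indeed, Remark~\ref{prop:relation_v_v_tilde} gives $\tilde v_\epsilon(z')=v_\epsilon(G^{-1}(z'))$ with $|G^{-1}(z')-z'|\leq \epsilon \,\|g\|_\infty\to 0$, so any test sequence $(\epsilon_n,z_n)\to (0,z)$ for $\tilde v_{\epsilon_n}$ yields a test sequence $(\epsilon_n,G^{-1}_{\epsilon_n}(z_n))\to (0,z)$ for $v_{\epsilon_n}$, and conversely. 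The uniform bound $\|v_\epsilon\|_\infty\leq M_\ell/\lambda$ guarantees that the semi-limits are finite.

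Second, for any $z_0\in\Omega^i$ one has $|z_{0,1}|>0$, so there exists $\epsilon_0(z_0)>0$ such that the ball $B=B(z_0, |z_{0,1}|/2)$ is contained in $\Omega^i_\epsilon$ for every $\epsilon\leq \epsilon_0$, simply because $\Gamma_\epsilon\subset \{|x_1|\leq \epsilon \|g\|_\infty\}$. On this fixed ball no interface is present: the admissible test-functions from $\cR_\epsilon$ restricted to $B$ are exactly the $C^1(B)$ functions, and Definition~\ref{netviscoa} collapses to the classical notion of viscosity solution for the $\epsilon$-independent equation $\lambda u+H^i(x,Du)=0$. Thus, by Theorem~\ref{existence-epsilon_a}, each $v_\epsilon$ with $\epsilon\leq \epsilon_0$ is a classical viscosity solution of this equation on $B$.

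Finally, I would apply the standard Barles–Perthame stability theorem (see e.g.\ \cite{MR1484411}) to the uniformly bounded family $\{v_\epsilon\}_{\epsilon\leq \epsilon_0}$ on $B$; this yields that $\overline v$ is a subsolution and $\underline v$ a supersolution of \eqref{eq:5bisnew} on $B$. Since $z_0\in\Omega^i$ is arbitrary and the sub/supersolution properties are local, the conclusion extends to all of $\Omega^i$, and by the first step it is precisely the statement for $\overline{\tilde v}$ and $\underline{\tilde v}$. I do not expect a real obstacle here: the whole argument relies on the fact that the oscillations of the interface live in an $O(\epsilon)$-strip around $\Gamma$, so that on any compactly contained subset of $\Omega^i$ they have disappeared from both the geometry and the equation as soon as $\epsilon$ is small enough.
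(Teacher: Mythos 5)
Your proof is correct, and it follows a genuinely different and arguably cleaner route than the paper's. The paper works directly in the straightened variable $z$, where the Hamiltonian $\tilde H^i_\epsilon$ from \eqref{eq:7new} still oscillates in $\epsilon$ even in the interior of $\Omega^i$, so it has to deploy Evans's perturbed test-function method: given a test function $\phi$ touching, say, $\overline{\tilde v}$ from above at $\bar z\in\Omega^i$, one builds $\phi_\epsilon(z)=\phi(z)+\epsilon\,\partial_{z_1}\phi(\bar z)\,g(z_2/\epsilon)-\delta$, the corrector $y_2\mapsto p_1 g(y_2)$ being exactly the solution of the interior cell problem \eqref{eq:cell_pb_in_omega_i} identified in Remark~\ref{sec:fast-slow-variables}. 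You instead undo the change of variables $G$: you observe that the relaxed semi-limits are invariant under the $O(\epsilon)$ shift $G^{-1}-\mathrm{Id}$, so $\overline{\tilde v}=\overline v$ and $\underline{\tilde v}=\underline v$, and then note that in the original, non-straightened geometry the equation satisfied by $v_\epsilon$ away from $\Gamma_\epsilon$ is already $\epsilon$-independent, $\lambda u+H^i(x,Du)=0$, on any fixed ball compactly contained in $\Omega^i$ once $\epsilon<\mathrm{dist}(z_0,\Gamma)/\|g\|_\infty$. From there the classical Barles--Perthame stability theorem closes the argument with no corrector needed. These are two faces of the same computation — your step~1 and the paper's corrector $\epsilon p_1 g(z_2/\epsilon)$ both amount to composing the test function with $G^{-1}$ to leading order — but your version makes this structural and avoids the ad hoc construction. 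The paper's choice is motivated by wanting to set up the perturbed test-function machinery that is anyway essential for the transmission condition on $\Gamma$; your version is more economical for this particular interior statement.

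One small cosmetic point: the identity $\overline{\tilde v}=\overline v$ (and similarly for the lower semi-limits) in your first step actually holds on all of $\R^2$, not only on $\Omega^L\cup\Omega^R$; restricting it is harmless but unnecessary.
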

\begin{proof}
The proof is classical and relies on perturbed test-functions techniques, see\cite{MR1007533}. 
 For a test- function $\phi$  (near a point $\bar z$ for example), the main idea is to construct the perturbed test-function $\phi_\epsilon(z)=\phi(z)+\epsilon\partial_{z_1}\phi(\bar{z})g(\frac{z_2}\epsilon)-\delta$, for a suitable positive number $\delta$.
\end{proof}
\subsection{An ingredient in the effective transmission condition on $\Gamma$: the Hamiltonian $H_\Ga$  inherited from the half-planes}
\label{sec:effect-hamilt-gamma}
For $i\in \{L,R\}$, let us define the Hamiltonian $H^{+,i}$ and $H^{-,i}$:  $\R^2\times \R^2\rightarrow \R $  by
\begin{equation}
  \label{def:H_Gamma_+_i}
H^{\pm,i}(z,p)= \max_{a\in A^i \hbox{ s.t. } \pm \sigma^i f^i(z,a).e_1 \geq 0} (-p. f^i(z,a) -\ell^i(z,a)).
\end{equation}
and   $H_\Ga:\Ga \times \R^2 \times \R^2 \rightarrow \R $  by
\begin{equation}
  \label{def:H_gamma}
H_\Ga(z,p^L,p^R)= \max\left(H^{+,L}(z,p^L),H^{+,R}(z,p^R) \right).
\end{equation}
As in \cite{MR3299352,galise:hal-01010512}, we introduce the functions $ E_0^i: \R\times \R \to \R$, $i=L,R$ and $ E_0: \R\times \R \to \R$:
\begin{eqnarray}
 \label{def:E_0^i}
 E_0^i(z_2,p_2)&=&\min\left\lbrace H^i((0, z_2),p_2e_2+qe_1), \quad q\in \R \right\rbrace,
 \\
  \label{def:E_0}
 E_0(z_2,p_2)&=&\max\left\lbrace  E_0^L(z_2,p_2), E_0^R(z_2,p_2)\right\rbrace.
\end{eqnarray}
The following lemma, which is the same as  \cite[Lemma~2.1]{oudet2014},    deals with some monotonicity properties of   $H^{\pm,i}$: 
\begin{lemma}\label{sec:effect-hamilt-gamma-1}
$\;$
  \begin{enumerate}
  \item For any $(0,z_2)\in \Ga$, $p\in \R^2$,
  $p_1\mapsto H^{+,L}((0,z_2),p+p_1e_1)$   and $p_1\mapsto H^{-,R}((0,z_2),p+p_1e_1)$   are nondecreasing;   $p_1\mapsto H^{-,L}((0,z_2),p+p_1e_1)$  and $p_1\mapsto H^{+,R}((0,z_2),p+p_1e_1)$ are  nonincreasing.
\item For $z_2, p_2\in \R$,  there exist two  unique real numbers $p^{-,L}_0(z_2,p_2)\le  p^{+,L}_0(z_2,p_2) $ such that
  \begin{eqnarray*}
H^{-,L}((0,z_2),p_2 e_2+p e_1) &=& \left\lbrace
\begin{array}{ll}
H^L((0,z_2),p_2 e_2+p e_1) & \mbox{ if } p \le p^{-,L}_0(z,p_2),\\
E^L_0(z_2,p_2)  & \mbox{ if } p > p^{-,L}_0(z,p_2),
\end{array}
\right.    \\
H^{+,L}((0,z_2),p_2 e_2+p e_1) &=& \left\lbrace
\begin{array}{ll}
E^L_0(z_2,p_2)  & \mbox{ if } p \le p^{+L}_0(z,p_2),\\
H^L((0,z_2),p_2 e_2+p e_1) & \mbox{ if } p > p^{+,L}_0(z,p_2).
\end{array}
\right.
  \end{eqnarray*}
\item For  $z_2, p_2\in \R$,  there exist two  unique real numbers $p^{+,R}_0(z_2,p_2)\le  p^{-,R}_0(z_2,p_2) $ such that
  \begin{eqnarray*}
H^{-,R}((0,z_2),p_2 e_2+p e_1) &=& \left\lbrace
\begin{array}{ll}
E^R_0(z_2,p_2)  & \mbox{ if } p \le p^{-,R}_0(z,p_2),\\
H^R((0,z_2),p_2 e_2+p e_1) & \mbox{ if } p > p^{-,R}_0(z,p_2),
\end{array}
\right.\\
H^{+,R}((0,z_2),p_2 e_2+p e_1) &=& \left\lbrace
\begin{array}{ll}
H^R((0,z_2),p_2 e_2+p e_1) & \mbox{ if } p \le p^{+,R}_0(z,p_2),\\
E^R_0(z_2,p_2)  & \mbox{ if } p > p^{+,R}_0(z,p_2).
\end{array}
\right.
  \end{eqnarray*}
  \end{enumerate}
\end{lemma}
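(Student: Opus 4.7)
The plan is to reduce everything to the one-dimensional convex function
$\phi_i : q \mapsto H^i((0,z_2), p_2e_2 + qe_1)$. By definition $\phi_i$ is a supremum of affine maps in $q$, hence convex and continuous; assumption [H3] provides coercivity, since choosing $a\in A^i$ with $f^i((0,z_2),a) = -\delta_0\,\operatorname{sgn}(q)\, e_1$ yields $\phi_i(q) \ge \delta_0|q| - |p_2|M_f - M_\ell$. Therefore $\phi_i$ attains its infimum $E_0^i(z_2,p_2)$ on a nonempty compact interval $[q^i_-(z_2,p_2), q^i_+(z_2,p_2)]$, and the two endpoints will serve as the thresholds of the lemma via $p_0^{-,L} = q_-^L$, $p_0^{+,L} = q_+^L$, $p_0^{+,R} = q_-^R$, $p_0^{-,R} = q_+^R$; uniqueness will come from the monotonicity in Part 1 combined with the piecewise identification below.

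Part 1 is immediate from the definitions: for $H^{+,L}$ the constraint $\sigma^L f^L\cdot e_1 = -f^L\cdot e_1 \ge 0$ makes $p_1\mapsto -p_2(f^L\cdot e_2) - p_1(f^L\cdot e_1) - \ell^L$ nondecreasing for every admissible $a$, and the supremum of nondecreasing functions is nondecreasing; the three remaining monotonicities are identical. For Parts 2 and 3 the key tool is [H2]: the set $\FL^i((0,z_2))$ is closed convex, so maximizing over $A^i$ is equivalent to maximizing over $\FL^i$, and the subdifferential of $\phi_i$ at any point of $[q^i_-, q^i_+]$ is the convex set $\{-f\cdot e_1 : (f,\ell)\in\FL^i \text{ maximizing}\}$, which contains $0$. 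I can therefore extract a single maximizer $(f_*,\ell_*)\in\FL^i$ with $f_*\cdot e_1 = 0$, simultaneously admissible for $H^{+,i}$ and $H^{-,i}$ and realizing the value $E_0^i$ independently of $p_1$. This yields the universal lower bound $H^{\pm,i}((0,z_2), p_2e_2 + p_1e_1) \ge E_0^i(z_2,p_2)$ for every $p_1\in\R$.

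To finish the identification I focus on $H^{+,L}$. For $p_1\le q^L_+$, Part 1 combined with $H^{+,L}\le H^L$ and $H^L((0,z_2), p_2e_2 + q^L_+ e_1) = E_0^L$ gives $H^{+,L}(p_1)\le E_0^L$, which combined with the lower bound forces equality. For $p_1 > q^L_+$, any maximizer $a^*$ of $\phi_L(p_1)$ defines the affine minorant $q\mapsto -p_2(f^L(a^*)\cdot e_2) - q(f^L(a^*)\cdot e_1) - \ell^L(a^*)$ of $\phi_L$, tangent at $p_1$; since $\phi_L(p_1) > E_0^L = \phi_L(q^L_+)$ and $p_1 > q^L_+$, this minorant is strictly larger at $p_1$ than at $q^L_+$, forcing the slope $-f^L(a^*)\cdot e_1$ to be strictly positive, so $a^*$ is admissible for $H^{+,L}$ and $H^{+,L}(p_1) \ge H^L(p_1)$, giving equality with the trivial reverse inequality. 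The analysis for $H^{-,L}$ is symmetric, with $q^L_+$ replaced by $q^L_-$, producing the ordering $p_0^{-,L}\le p_0^{+,L}$ of Part 2. Part 3 follows by the same template applied to $\phi_R$: the sign $\sigma^R = +1$ reverses the roles of the two constraints, so that $H^{+,R}$ coincides with $H^R$ to the left of $q_-^R$ and with $E_0^R$ to the right, and conversely for $H^{-,R}$, giving $p_0^{+,R} = q_-^R \le q_+^R = p_0^{-,R}$. The main delicate step is the extraction of a single maximizer with $f\cdot e_1 = 0$ on the argmin interval, which requires the convexity of $\FL^i$ from [H2]; without it, $0$ would lie only in the convex hull of the admissible slopes, and the universal lower bound would require an additional approximation argument.
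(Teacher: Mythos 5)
Your proof is correct. Note that the paper itself does not prove this lemma: it simply states it and cites \cite[Lemma~2.1]{oudet2014}, so there is no internal proof to compare against.

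Your self-contained argument is sound and well organized. Part 1 is the elementary observation that the integrand $-p_1(f^i\cdot e_1)$ has a definite sign under each constraint. For Parts 2 and 3, the crucial step is the one you flag yourself: using the closedness and convexity of $\FL^i((0,z_2))$ from [H2] to produce a \emph{single} element $(f_*,\ell_*)$ in the argmax at a minimizer $\bar q$ of $\phi_i$ with $f_*\cdot e_1=0$. This is where the work is: for a general non-convex $A^i$, the subdifferential identity would only yield $0$ in the \emph{convex hull} of the active slopes, and the universal lower bound $H^{\pm,i}\ge E^i_0$ would need a separate argument. With the maximizer $(f_*,\ell_*)$ in hand, the affine minorant $q\mapsto -p_2 f_{*,2}-\ell_*$ is constant and admissible for both sign constraints, giving the lower bound cleanly. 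The two one-sided identifications then follow exactly as you describe, by combining the monotonicity of Part 1 (for the flat regime, using $H^{\pm,i}\le H^i$ at the argmin endpoint) with the strict-slope argument (for the regime where $H^{\pm,i}=H^i$, using that any maximizer at a $p_1$ outside the argmin interval produces an affine minorant with slope of the required sign, hence is admissible for the constrained Hamiltonian). The bookkeeping of thresholds $p_0^{\pm,i}$ against the endpoints $q^i_\mp$ is correct in all four cases, including the sign reversal $\sigma^R=+1$ which swaps the roles on the right. One minor remark: your coercivity bound $\phi_i(q)\ge \delta_0|q|-|p_2|M_f-M_\ell$ is weaker than what the choice $f^i=-\delta_0\,\mathrm{sgn}(q)\,e_1$ actually gives (the $|p_2|M_f$ term is unnecessary since $f^i\cdot e_2=0$ for that control), but this is harmless.
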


\section{A new Hamiltonian involved in the effective transmission condition}
\label{sec:new-hamilt-involv}
In this section, we construct the effective Hamiltonian corresponding to effective dynamics staying on the interface $\Gamma$, 
by using similar ideas as those presented in \cite{MR3299352}. 
 We will define an effective Hamiltonian $E$ on $\Gamma$ as the  limit of a sequence of  ergodic constants for  state-constrained
 problems in larger and larger truncated domains.
 We will also construct correctors associated to the effective Hamiltonian. 
The  noteworthy difficulty is that the correctors need to be defined in an unbounded domain.
\subsection{Fast and slow variables}
Let us introduce the fast variable  $y_2 = \frac{z_2}{\epsilon}$. Neglecting the contribution of  $\epsilon g(y_2)$  in 
the Hamiltonians  $\tilde H_\epsilon^i$ previously defined in \eqref{eq:7new}, we obtain the new Hamiltonians
$\tilde H^i:\R^2\times \R^2\times \R\to \R$:  
\begin{equation}
\label{explicitelyfast}
\tilde{H}^i(z,p,y_2):= \max_{a\in A^i} \left(-{\tilde{J}}(y_2) f^i(z,a)\cdot
p  -\ell^i(z,a)\right),
\end{equation} 
where
\begin{equation}
\label{jacobiany}
{\tilde{J}}(y_2)=\left(\begin{matrix}
1 &-g'(y_2)\\
0 &1
\end{matrix}\right).
\end{equation}

As above, using $\sigma^i$  introduced in \S~\ref{sec:geometry}, we also define  $\tilde{H}^{+,i}$ and $\tilde{H}^{-,i}$: $\R^2\times\R^2\times \R\to \R$ by
\begin{equation}
\label{Ham+fastslowGamma}
\tilde{H}^{\pm,i} (z,p, y_2)=\max_{\begin{array}{c}
a\in A^i \hbox{ s.t. } \\
 \pm \sigma^i{\tilde{J}}(y_2) f^i(z,a)\cdot e_1\ge 0
\end{array}
}  \left(-{\tilde{J}}(y_2)f^i(z,a)\cdot p  -\ell^i(z,a)\right).
\end{equation}
\begin{lemma}
   \label{sec:fast-slow-variables-2}
With $L_f$, $M_f$, $M_\ell$ and $\delta_0$     appearing in Assumptions  $[\rm{H}0]$-$[\rm{H}3]$, 
for any $z\in \R^2$, $ y_2\in \R$ and  $p,p'\in \R^2$, 
\begin{equation}\label{eq:12}
| \tilde H^{i}(z,p,y_2)- \tilde H^{i}(z,p',y_2) | \le M_f |p-p'|, \quad i=L,R,
\end{equation}
and there exists a constant $M>0$ (which can be computed from $L_f$, $M_f$, $\delta_0$ and  $\|g'\|_{\infty}$)  and a modulus of continuity $\omega$ (which can be deduced from $\omega_\ell$, $M_\ell$ $L_f$, $\delta_0$ and  $\|g'\|_{\infty}$) such that  
for any $z,z'\in \R^2$, $y_2\in\R$ and $p\in \R^2$,
\begin{equation}\label{eq:13}
\mid \tilde H^{i}(z,p,y_2)- \tilde H^{i}(z',p,y_2) \mid \le M |p||z-z'|+\omega(|z-z'|).
\end{equation}
Similar estimates hold for  $\tilde H^{+,i}$ and $\tilde H^{-,i}$.
\end{lemma}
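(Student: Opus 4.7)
The plan is to reduce both inequalities to the elementary inequality $|\max_a F(a) - \max_a G(a)| \le \max_a |F(a) - G(a)|$, applied to the expressions in braces defining $\tilde H^i$ and $\tilde H^{\pm,i}$, and then to control the matrix $\tilde J(y_2)$ uniformly in $y_2$ by using the operator-norm estimate \eqref{prop:matrix_norm_J_epsilon_3}, which remains valid for $\tilde J(y_2)$ exactly as it was written for $J_\epsilon$.

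For \eqref{eq:12}, I observe that only the linear term $-\tilde J(y_2) f^i(z,a)\cdot p$ depends on $p$, while the set of admissible controls depends on $z$ and $y_2$ but not on $p$. The max-inequality followed by Cauchy--Schwarz yields
\begin{equation*}
| \tilde H^{i}(z,p,y_2)- \tilde H^{i}(z,p',y_2) | \le \max_{a\in A^i} | \tilde J(y_2) f^i(z,a)|\cdot |p-p'|,
\end{equation*}
and one concludes using $|f^i(z,a)|\le M_f$ and the uniform bound on $\|\tilde J(y_2)\|$ (with the constant $M_f$ understood up to the structural factor $\sqrt 2(1+\|g'\|_\infty)$ coming from \eqref{prop:matrix_norm_J_epsilon_3}). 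The same argument applies verbatim to $\tilde H^{\pm,i}$, because the sign constraint on $a$ does not depend on $p$.

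For \eqref{eq:13}, writing $\tilde J(y_2)f^i(z,a)\cdot p - \tilde J(y_2)f^i(z',a)\cdot p = \tilde J(y_2)(f^i(z,a)-f^i(z',a))\cdot p$ and applying the same max-inequality gives
\begin{equation*}
| \tilde H^i(z, p, y_2) - \tilde H^i(z', p, y_2) | \le \max_{a\in A^i}\Bigl( |\tilde J(y_2)(f^i(z,a) - f^i(z', a))\cdot p| + |\ell^i(z,a) - \ell^i(z',a)| \Bigr),
\end{equation*}
after which the Lipschitz bound $|f^i(z,a)-f^i(z',a)|\le L_f|z-z'|$ from $[\mathrm{H}0]$ and the modulus of continuity $\omega_\ell$ from $[\mathrm{H}1]$, again combined with \eqref{prop:matrix_norm_J_epsilon_3}, give the desired estimate with $M$ depending only on $L_f$ and $\|g'\|_\infty$ and $\omega=\omega_\ell$.

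The main obstacle, and the reason for the appearance of $\delta_0$ and $M_\ell$ in the constants for the constrained case, is the $z$-dependence estimate for $\tilde H^{\pm,i}$: a control $a^\star$ that achieves (or nearly achieves) the maximum at $z$ need not satisfy $\pm\sigma^i\tilde J(y_2)f^i(z',a^\star)\cdot e_1\ge 0$ at a nearby point $z'$, so a direct application of the max-inequality is not available. The standard remedy is to use the controllability hypothesis $[\mathrm{H}3]$ to produce a control $\bar a\in A^i$ whose velocity $f^i(G^{-1}(z),\bar a)$ points strictly into the required half-space with margin of order $\delta_0$; by convexity $[\mathrm{H}2]$, a convex combination $a_\theta$ of $a^\star$ and $\bar a$ with $\theta$ of order $L_f|z-z'|/\delta_0$ becomes admissible at $z'$ and loses only $O\bigl((L_f|p|+M_\ell)|z-z'|/\delta_0\bigr)$ in the objective. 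I expect this control-reconstruction step to be the delicate point; the rest is routine bookkeeping.
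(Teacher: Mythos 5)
Your proposal is correct and follows the same route the paper sketches: the paper's proof for the constrained Hamiltonians is only a pointer to Lemmas~3.5 and~3.6 of the reference, explicitly naming convexity of $\{(\tilde J(y_2)f^i(z,a),\ell^i(z,a)):a\in A^i\}$, the $[\mathrm{H}3]$-type controllability, and the $[\mathrm{H}0]$--$[\mathrm{H}1]$ regularity as the ingredients, which is exactly the control-reconstruction argument you spell out. You also correctly isolate that for \eqref{eq:12} the admissible set is $p$-independent (so the max inequality works verbatim for $\tilde H^{\pm,i}$), while the genuine difficulty is the $z$-dependence of the constraint set in \eqref{eq:13}, and you correctly explain why $\delta_0$ and $M_\ell$ enter $M$ and $\omega$.

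Two small points worth tidying. First, in the control-reconstruction step the velocity whose sign you want to push strictly into the admissible half-space is $\pm\sigma^i\,\tilde J(y_2)f^i(z,\bar a)\cdot e_1$, not $f^i(G^{-1}(z),\bar a)$: in \S~4.1 the Hamiltonians $\tilde H^{\pm,i}$ are defined directly on the slow variable $z$ via $\tilde J(y_2)f^i(z,a)$, and $G^{-1}$ plays no role there. Since $\tilde J(y_2)$ is invertible with norm bounded in terms of $\|g'\|_\infty$, assumption $[\mathrm{H}3]$ (in the uniform-in-$z$ form the paper invokes) still yields a $\bar a$ with margin of order $\delta_0/(1+\|g'\|_\infty)$, so the argument goes through unchanged. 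Second, you are right that the constant in \eqref{eq:12}, computed literally, is $\sqrt 2(1+\|g'\|_\infty)M_f$ rather than $M_f$; you noted this already, and the interpretation you adopt is the sensible one.
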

\begin{proof}
 The proof is standard for the Hamiltonians $\tilde H^i$. Adapting the proofs of Lemmas 3.5 and 3.6 in \cite{oudet2014}, 
we see that  similar estimates hold for  $\tilde H^{+,i}$ and $\tilde H^{-,i}$: 
the proofs are not direct and rely on the convexity of $\{ (\tilde J (y_2) f^i (z, a), \ell^i (z,  a)), a\in A^i\}$, on the fact that the dynamics $\tilde J (y_2) f^i (z, \cdot)$ satisfy  a strong controlability property  similar to  $[\rm{H}3]$ uniformly w.r.t. $z$,   and on continuity properties of $ (z, a)\mapsto \tilde J(y_2) f^i (z, a)$ and  $ (z, a)\mapsto \ell^i (z, a)$ similar to  $[\rm{H}0]$ and $[\rm{H}1]$.
\end{proof}

\begin{remark}
  \label{sec:fast-slow-variables}
Take $z\in \Omega^i$ and $p\in\R^2$.  
The unique real number $\lambda^i(z, p)$ such that
the following one dimensional cell problem in the variable $y_2$
 \begin{equation}
\label{eq:cell_pb_in_omega_i}
\left\lbrace
\begin{array}{ll}
\tilde{H}^i(z,p+\chi'(y_2)e_2, y_2)=\lambda^i(z, p),\\
\chi \mbox{ is 1-periodic w.r.t. }y_2,
\end{array}
\right.
\end{equation}
admits a viscosity solution is $\lambda^i(z, p)=H^i(z,p)$; indeed, for this choice of  $\lambda^i(z, p)$,   it is easy to check that 
$\chi(y_2)= p_1 g(y_2)$ is a  solution of \eqref{eq:cell_pb_in_omega_i} and the uniqueness of $\lambda^i(z, p)$ such that \eqref{eq:cell_pb_in_omega_i}
 has a solution is  well known, see e.g.  \cite{LPV,MR1007533}.
\end{remark}
\begin{remark}
  \label{sec:fast-slow-variables-1}
 For any $(0,z_2)\in \Ga$, $p\in \R^2$ and $y_2\in \R$,     the functions $p_1\mapsto \tilde H^{\pm,i}((0,z_2),p+p_1e_1,y_2)$  have the same  monotonicity properties
as those  stated in point 1 in  Lemma~\ref{sec:effect-hamilt-gamma-1} for  $p_1\mapsto H^{\pm,i}((0,z_2),p+p_1e_1)$. Similarly,  
one can prove the  counterparts of points 2 and 3 in  Lemma~\ref{sec:effect-hamilt-gamma-1} involving $\tilde H^{\pm,i}((0,z_2),p+p_1e_1,y_2)$,  $\tilde H^{i}((0,z_2),p+p_1e_1,y_2)$ and
\[\tilde E_0^i(z_2,p_2,y_2)=\min\left\lbrace \tilde H^i((0, z_2),p_2e_2+qe_1,y_2), \; q\in \R \right\rbrace.\]
\end{remark}

\subsection{Ergodic constants for state-constrained problems in truncated domains}
\label{sec:ergod-const-state}

\subsubsection{State-constrained problem in truncated domains}\label{sec:state-constr-probl}
Let us fix $z=(0,z_2)\in \Ga$ and $p_2\in \R$. For $\rho>0$, we consider the {\sl truncated cell problem}:
\begin{equation}
\label{trunc-cellp}
\left\{
    \begin{array}[c]{lll}
    \tilde{H}^L((0,z_2),Du(y)+p_2e_2,y_2)&=\lambda_\rho(z_2, p_2)&\hbox{ in } (-\rho,0)\times \R,  \\
        \tilde{H}^R((0,z_2),Du(y)+p_2e_2,y_2)&=\lambda_\rho(z_2, p_2)&\hbox{ in } (0,\rho)\times \R,  \\
\ds \max_{i=L,R}\left( \tilde{H}^{+,i} ((0,z_2),Du^i(y)+p_2e_2,y_2)\right) &=\lambda_\rho(z_2, p_2)&\hbox{ on } \Ga,\\
\tilde{H}^{-,L} ((0,z_2),Du(y)+p_2e_2,y_2) &=\lambda_\rho(z_2, p_2)&\hbox{ on } \{-\rho\}\times \R ,\\
\tilde{H}^{-,R} ((0,z_2),Du(y)+p_2e_2,y_2) &=\lambda_\rho(z_2, p_2)&\hbox{ on } \{\rho\}\times \R ,\\
    u \hbox{ is 1-periodic w.r.t. } y_2,
    \end{array}
\right.
\end{equation}
where the Hamiltonians $\tilde H^i$, $\tilde{H}^{+,i}$ and $\tilde{H}^{-,i}$ are respectively defined in \eqref{explicitelyfast} and \eqref{Ham+fastslowGamma}.
The notions of viscosity subsolution, supersolution and solution of \eqref{trunc-cellp} are defined in the same way as in Definition \ref{netviscoa} using the set of test-functions
\begin{equation}
\label{def:viscosity_solution_of_trunc-cell_pb}
\cR_\rho = 
\left\lbrace \psi|_{[-\rho,\rho]\times\R}, \; \psi \in \cR
\right\rbrace,
\end{equation}
with $\cR$ defined in Definition~\ref{def:set_test_function_with_straight_geometry}. The following stability property allows one to construct a solution of \eqref{trunc-cellp}:
\begin{lemma}[A stability result]
\label{lem:stability_for_truncated_cell_problem}
Let $(u^\eta)_{\eta}$ be a sequence of uniformly Lipschitz continuous solutions of the perturbed equation
\begin{equation}
\label{trunc-cellpapprox}
\left\{
    \begin{array}[c]{lll}
   \eta u(y)+\tilde{H}^L((0,z_2),Du(y)+p_2e_2,y_2)&=\lambda_\eta&\hbox{ in } (-\rho,0)\times \R,  \\
\eta u(y)+\tilde{H}^R((0,z_2),Du(y)+p_2e_2,y_2)&=\lambda_\eta&\hbox{ in } (0,\rho)\times \R,  \\
\ds \eta u(y)+ \max_{i=L,R}\left( \tilde{H}^{+,i} ((0,z_2),Du^i(y)+p_2e_2,y_2)\right) &=\lambda_\eta&\hbox{ on } \Ga,\\
\eta u(y)+\tilde{H}^{-,L} ((0,z_2),Du(y)+p_2e_2,y_2) &=\lambda_\eta&\hbox{ on } \{-\rho\}\times \R ,\\
\eta u(y)+\tilde{H}^{-,R} ((0,z_2),Du(y)+p_2e_2,y_2) &=\lambda_\eta &\hbox{ on } \{\rho\}\times \R ,\\
    u \hbox{ is 1-periodic w.r.t. } y_2,
    \end{array}
\right.
\end{equation}
such that $\lambda_\eta$ tends to $\lambda$ as $\eta$ tends to $0$ and $u^\eta$ converges to $u^0$  uniformly in $[-\rho,\rho]\times \R$. Then $u^0$ is a viscosity solution of  \eqref{trunc-cellp}  (replacing  $\lambda_\rho(z_2,p_2)$ with  $\lambda$).
\end{lemma}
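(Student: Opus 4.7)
The plan is to apply the classical half-relaxed limits stability technique \cite{MR1007533}, suitably adapted to the junction framework of Definition~\ref{netviscoa}. As a preliminary remark, since $u^\eta$ is uniformly Lipschitz continuous and defined on $[-\rho,\rho]\times\R$ with $1$-periodicity in $y_2$, the functions $u^\eta$ are automatically uniformly bounded (up to a harmless additive constant, which does not alter the viscosity property), so $\eta u^\eta \to 0$ uniformly on $[-\rho,\rho]\times\R$. The strategy is then to verify the viscosity sub- and supersolution properties of $u^0$ for \eqref{trunc-cellp} by passing to the limit in the corresponding properties of $u^\eta$ for \eqref{trunc-cellpapprox}.

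For the subsolution property, I would fix $\phi\in\cR_\rho$ and a strict local maximum point $y_0$ of $u^0-\phi$. By uniform convergence, standard arguments furnish a sequence $y_\eta$ of local maxima of $u^\eta-\phi$ with $y_\eta\to y_0$. I would then split into cases according to the location of $y_0$:
\begin{itemize}
\item If $y_0$ lies in the open set $(-\rho,0)\times\R$ or $(0,\rho)\times\R$, then $y_\eta$ eventually lies in the same open region. The subsolution inequality for $u^\eta$ gives $\eta u^\eta(y_\eta)+\tilde H^i((0,z_2),D\phi(y_\eta)+p_2e_2,(y_\eta)_2)\le\lambda_\eta$, and the Lipschitz/continuity estimates from Lemma~\ref{sec:fast-slow-variables-2} allow passage to the limit.
\item If $y_0\in\{\pm\rho\}\times\R$, the corresponding $\tilde H^{-,i}$ inequality passes to the limit directly when $y_\eta\in\{\pm\rho\}\times\R$, and when $y_\eta$ sits in the adjacent open half, one uses $\tilde H^{-,i}\le\tilde H^i$ (a max over a subset) to conclude.
\item If $y_0\in\Gamma$, then along a subsequence $y_\eta$ is either always on $\Gamma$ (direct passage to the limit) or always in one of $\Omega^L,\Omega^R$. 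In the latter case, say $y_\eta\in\Omega^L$, we obtain in the limit $\tilde H^L((0,z_2),D\phi^L(y_0)+p_2e_2,(y_0)_2)\le\lambda$, which yields $\tilde H^{+,L}\le\lambda$ at once. To complete the transmission inequality $\max_i\tilde H^{+,i}\le\lambda$, I would rely on the reduced test-function characterization from \cite{imbert:hal-01073954} (already invoked elsewhere in the paper), together with the structural monotonicity relations between $\tilde H^i$ and $\tilde H^{\pm,i}$ supplied by Remark~\ref{sec:fast-slow-variables-1}: it suffices to test against admissible $\phi$ whose one-sided gradients are linked through the formulas of Lemma~\ref{sec:effect-hamilt-gamma-1}, for which the bound on $\tilde H^L(D\phi^L)$ transfers to a bound on $\tilde H^{+,R}(D\phi^R)$.
\end{itemize}

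The supersolution property is obtained by the mirror argument, replacing strict local maxima by strict local minima; here the interior inequalities $\tilde H^i\ge \lambda_\eta-\eta u^\eta$ pass to the limit more easily, since $\max_i\tilde H^{+,i}\ge\tilde H^{+,i}$ for each $i$, and at points on $\Gamma$ the supersolution characterization via the reduced test-function set again shortcuts the delicate case where the minimum point $y_\eta$ of $u^\eta-\phi$ sits off $\Gamma$.

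The main obstacle is therefore the subsolution property at $y_0\in\Gamma$ when $y_\eta$ stays strictly inside one half-plane: only one-sided information survives the passage to the limit, and recovering the full transmission inequality requires the structural monotonicity/inversion relations of Lemma~\ref{sec:effect-hamilt-gamma-1} and Remark~\ref{sec:fast-slow-variables-1}, combined with the reduced class of test-functions introduced in \cite{imbert:hal-01073954}. All other cases are routine adaptations of the standard viscosity stability machinery.
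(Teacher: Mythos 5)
Your proposal correctly diagnoses where the difficulty lies — the subsolution inequality at $\bar y\in\Gamma$ when the approximating maximizers $y^\eta$ stay inside one half-plane — but the resolution you sketch does not go through, and it is not the route the paper takes.

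The specific gap: if $y^\eta\in\Omega^L$ for all $\eta$, the passage to the limit gives $\tilde H^L((0,z_2),D\phi^L(\bar y)+p_2e_2,\bar y_2)\le\lambda$, hence $\tilde H^{+,L}(\dots)\le\lambda$; but this yields no information whatsoever on $\tilde H^{+,R}(D\phi^R)$. Your claim that a bound on $\tilde H^L(D\phi^L)$ "transfers to a bound on $\tilde H^{+,R}(D\phi^R)$" via the monotonicity relations of Lemma~\ref{sec:effect-hamilt-gamma-1} and the reduced test-function framework is not a valid step: the two quantities live on different sides of $\Gamma$, involve different control sets, and the reduced test-function characterization of Imbert--Monneau is a theorem about which test-functions one must check, not a bridge that turns a one-sided inequality into a two-sided one. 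As stated, the subsolution case is not proved.

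The paper's proof (Appendix~A) instead uses a concrete penalization device. For each fixed $i\in\{L,R\}$ one introduces $d_i(y)=0$ on $\overline\Omega^i$ and $d_i(y)=|y_1|$ on the other side, and replaces the test-function $\varphi$ by $\varphi+Cd_i$ with $C$ one more than the uniform Lipschitz constant of $u^\eta-\varphi$. This forces the local maximum points $y^\eta$ of $u^\eta-\varphi-Cd_i$ into $\overline\Omega^i$ (else the Lipschitz bound is contradicted). One then passes to the limit separately for $i=L$ and $i=R$, obtaining both inequalities $\tilde H^{+,i}\le\lambda$, $i=L,R$. That is the key idea you are missing.

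Your treatment of the supersolution case is also too optimistic. If $y^\eta\in\Omega^i$ one gets $\tilde H^i\ge\lambda$, but since $\tilde H^i\ge\tilde H^{+,i}$ this does not yield $\tilde H^{+,i}\ge\lambda$, let alone $\max_i\tilde H^{+,i}\ge\lambda$. The paper resolves this with a genuine case analysis based on the kink structure of the Hamiltonians (the critical slopes $\tilde p_0^L,\tilde p_0^R$ of Remark~\ref{sec:fast-slow-variables-1} and Lemma~\ref{sec:effect-hamilt-gamma-1}): either the slopes of $\varphi$ already sit on the favorable side of the kinks, in which case $\tilde H^i=\max_j\tilde H^{+,j}$ at $\bar y$ and the limiting interior inequality suffices, or one modifies the slopes of $\varphi$ in the $e_1$-direction to produce a new test-function $\psi$ touching $u^0$ from below at $\bar y$ that does satisfy the favorable condition, and applies the first case to $\psi$. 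This constructive modification is the supersolution counterpart of the penalization trick and is not a "routine adaptation of the standard viscosity stability machinery."
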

\begin{proof}
The proof of Lemma \ref{lem:stability_for_truncated_cell_problem} follows the lines of the proofs of Theorem 6.1 and Theorem 6.2 in \cite{achdou:hal-00847210}. Actually, the proof is even simpler in the present case since the involved Hamiltonians do not depend of $\eta$. We give it in Appendix \ref{appendix:proof_stability_truncated_cell_pb} for the reader's convenience.
\end{proof}
The following  comparison principle for \eqref{trunc-cellpapprox} yields the uniqueness of the constant $\lambda_\rho(z_2,z_2)$ for which the cell-problem \eqref{trunc-cellp} admits a solution:
\begin{lemma}[A comparison result]
\label{th:comparison_truncated_cell_pb}
 For $\eta>0$, let $u$ be a  bounded subsolution of \eqref{trunc-cellpapprox}  and 
$v$ be a  bounded supersolution of \eqref{trunc-cellpapprox}. Then $u\le v$ in $[-\rho,\rho]\times \R$.
\end{lemma}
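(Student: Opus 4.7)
I would argue by contradiction. Set $M:=\sup_{[-\rho,\rho]\times\R}(u-v)$ and assume $M>0$. The $1$-periodicity in $y_2$ lets me work on the compact cylinder $K:=[-\rho,\rho]\times\T$, on which $M$ is attained at some point $\bar y$. Classical Crandall--Ishii variable doubling on $K\times K$ with penalty $|x-y|^2/(2\alpha)$ produces approximate maximizers $(x_\alpha,y_\alpha)\to\bar y$ satisfying $|x_\alpha-y_\alpha|^2/\alpha\to 0$ and $u(x_\alpha)-v(y_\alpha)\to M$.

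The analysis then splits into three cases according to the position of $\bar y$. If $\bar y$ lies in the open interior $((-\rho,0)\cup(0,\rho))\times\T$, then for $\alpha$ small both $x_\alpha,y_\alpha$ sit on the same side of $\Gamma$, away from $\{\pm\rho\}\times\R$. Writing the subsolution inequality at $x_\alpha$ and the supersolution inequality at $y_\alpha$ with gradient $p_\alpha:=(x_\alpha-y_\alpha)/\alpha+p_2 e_2$, subtracting, and applying the Lipschitz/continuity estimates of Lemma~\ref{sec:fast-slow-variables-2}, yields
\[
\eta\bigl(u(x_\alpha)-v(y_\alpha)\bigr)\le \omega(|x_\alpha-y_\alpha|)+M_f|p_\alpha|\,|x_\alpha-y_\alpha|\xrightarrow[\alpha\to 0]{}0,
\]
contradicting $\eta M>0$.

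If $\bar y\in\Gamma$, the junction Hamiltonian $\max_i\tilde H^{+,i}$ replaces the interior equations in the viscosity inequalities at the critical points. I would invoke directly the local junction comparison of \cite[Theorem~3.3]{oudet2014}, whose hypotheses reduce to $[\widetilde{\rm H}0]_\epsilon$--$[\widetilde{\rm H}3]_\epsilon$ already verified in \S\ref{subsec:straight_optima_control}; the additional dependence on the fast variable $y_2$ enters only through the bounded coefficient $g'(y_2)$ of $\tilde J(y_2)$ and therefore does not disrupt any of the estimates used in \cite{oudet2014}.

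The main obstacle is the state-constrained boundary case $\bar y\in\{\pm\rho\}\times\T$, where only the inward Hamiltonians $\tilde H^{-,i}$ appear. My plan is a Soner-style penalization: choose $\psi\in\mathcal C^1([-\rho,\rho])$ with $\psi(0)=0$, $\psi'(-\rho)>0$, $\psi'(\rho)<0$, and replace the doubled functional by $\Psi_{\alpha,\beta}(x,y):=u(x)-v(y)-|x-y|^2/(2\alpha)+\beta\psi(x_1)$. For $\beta>0$ fixed and $\alpha$ small, the new approximate maximizer $x_\alpha^\beta$ lies strictly in $(-\rho,\rho)\times\T$, which reduces the analysis to the two previous cases up to an extra term $\beta\psi'(x_\alpha^\beta)e_1$ in the subsolution's effective gradient; by the monotonicity of $p_1\mapsto\tilde H^{\pm,i}$ stated in Lemma~\ref{sec:effect-hamilt-gamma-1} and extended in Remark~\ref{sec:fast-slow-variables-1}, this extra term produces only a controlled error that vanishes when one lets first $\alpha\to 0$ and then $\beta\to 0$. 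In all three cases, the strict monotonicity furnished by $\eta u$ converts the asymptotically vanishing right-hand side into the contradiction $\eta M\le 0$, and the proof is complete.
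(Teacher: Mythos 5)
Your route is genuinely different from the paper's: the paper proves this lemma in one line, by checking (as in the proof of Lemma~\ref{lem:existence_solution_truncated_cell_pb}) that the frozen dynamics and costs \eqref{eq:def_dynamic_pb_intermediate}--\eqref{def:ell_pb_oc_truncated} satisfy assumptions of type $[\rm{H}0]$--$[\rm{H}3]$ and then invoking the global comparison principle \cite[Theorem~3.4]{oudet2014}, which already covers both the interface $\Ga$ and the constrained boundaries $\{\pm\rho\}\times\R$. Your attempt at a self-contained doubling proof has gaps at exactly the two places that make such a proof nontrivial. At the junction, you cannot dispatch the case $\bar y\in\Ga$ by ``invoking'' the local comparison of \cite[Theorem~3.3]{oudet2014}: that statement only gives $\parallel (u-v)_+\parallel_{L^\infty(B)}\le \parallel (u-v)_+\parallel_{L^\infty(\partial B)}$, which is not contradicted by a maximum of $u-v$ located on $\Ga$ (the maximum may equally well be attained on $\partial B$, e.g.\ if $u-v$ is constant), and it is not a viscosity-inequality step that can be inserted at the critical points of a doubling functional. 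Handling test points on $\Ga$, where both inequalities involve the one-sided Hamiltonians $\tilde H^{+,i}$ and the usual cancellation fails, is precisely the hard content of \cite{oudet2014} (optimal-control arguments) or of the reduced-test-function approach of Imbert--Monneau; citing the local statement neither substitutes for it nor explains how your doubling is concluded.

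The boundary step is also flawed as written. Adding $\beta\psi(x_1)$ dislodges a maximizer from $\{\pm\rho\}\times\R$ only if the inward gain $\beta|\psi'(\pm\rho)|$ beats the Lipschitz constant of $u$ (compare the values of your functional at $x_\alpha$ and at $x_\alpha\mp h e_1$: the quadratic penalty contributes with the favorable sign, but $u$ may decrease at its Lipschitz rate), so $\beta$ cannot tend to $0$, and then the $O(\beta)$ perturbation of the gradient does not vanish; with $\beta$ small the maximizer can stay on the boundary, where for the subsolution only the non-coercive Hamiltonian $\tilde H^{-,i}$ is available and the cancellation again needs a junction-type treatment, not a standard doubling. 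The genuine Soner device shifts the point \emph{inside} the quadratic penalty rather than adding a bounded term. Two smaller inaccuracies: the monotonicity of $p_1\mapsto\tilde H^{\pm,i}$ from Lemma~\ref{sec:effect-hamilt-gamma-1} and Remark~\ref{sec:fast-slow-variables-1} is irrelevant in the interior case, where Lipschitz continuity in $p$ suffices; and the continuity you need in the interior doubling is with respect to the fast variable $y_2$ (through $g'$), not the estimate \eqref{eq:13} in the slow variable $z$ --- harmless because $g\in\cC^2$, but it is not what Lemma~\ref{sec:fast-slow-variables-2} states. If you want to avoid citing \cite[Theorem~3.4]{oudet2014} as the paper does, you must reproduce that machinery rather than a classical Crandall--Ishii argument.
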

\begin{proof}
As for Theorem \ref{sec:comparison-principle-1}, this result can be obtained by applying \cite[Theorem~3.4]{oudet2014}.
\end{proof}

\begin{lemma}
\label{lem:existence_solution_truncated_cell_pb}
There is a unique  $\lambda_\rho(z_2, p_2)\in \R$ such that \eqref {trunc-cellp} admits a bounded solution. 
For this choice of  $\lambda_\rho(z_2, p_2)$, there exists a  solution  $\chi_\rho(z_2,p_2,\cdot)$ which is Lipschitz continuous with  Lipschitz constant
 $L$ depending on $p_2$ only (independent of $\rho$).
\end{lemma}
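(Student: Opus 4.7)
The plan is to construct $\lambda_\rho(z_2,p_2)$ and a corrector $\chi_\rho(z_2,p_2,\cdot)$ by the classical vanishing-discount method, relying on Lemma~\ref{lem:stability_for_truncated_cell_problem} to pass to the limit, and then to obtain uniqueness of $\lambda_\rho$ from Lemma~\ref{th:comparison_truncated_cell_pb}. The central analytic ingredient is an $\eta$- and $\rho$-independent Lipschitz estimate, which I expect to be the main obstacle.

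Concretely, for each $\eta>0$ I would first solve the equation \eqref{trunc-cellpapprox} with $\lambda_\eta=0$, namely
\[
\eta w+\tilde H^i((0,z_2),Dw+p_2e_2,y_2)=0,
\]
together with the junction condition on $\Ga$, the state-constraint conditions at $\{y_1=\pm\rho\}\times\R$, and $1$-periodicity in $y_2$. Existence of a bounded continuous solution $w^\eta$ follows from Perron's method, since the constants $\pm M_\ell/\eta$ are respectively a super- and a subsolution; uniqueness comes from Lemma~\ref{th:comparison_truncated_cell_pb}. In particular $\|\eta w^\eta\|_\infty\le M_\ell$.

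The heart of the argument is then a uniform Lipschitz estimate. Using $[\widetilde{\rm H}3]_\epsilon$, one obtains the coercivity bound $\tilde H^i((0,z_2),p,y_2)\ge \tilde\delta_0|p|-M_\ell$, so that the subsolution inequality $\tilde H^i(\cdot,Dw^\eta+p_2e_2,\cdot)\le -\eta w^\eta\le M_\ell$ forces a global gradient bound $L(p_2):=|p_2|+2M_\ell/\tilde\delta_0$ on $w^\eta$ in the interior of each half-strip. The same estimate must be propagated up to $\Ga$ and up to $\{y_1=\pm\rho\}\times\R$, where the relevant Hamiltonians are $\tilde H^{+,i}$ and $\tilde H^{-,i}$; this is where strong controllability plays a delicate role, via the counterparts for $\tilde H^{\pm,i}$ of Lemmas~3.5--3.6 of \cite{oudet2014} (also used implicitly in Lemma~\ref{sec:prop-visc-sub-3}), which transfer the coercivity to the restricted-control Hamiltonians. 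I would then set $u^\eta:=w^\eta-w^\eta(0,0)$, which is $L(p_2)$-Lipschitz, vanishes at $(0,0)$, and is uniformly bounded on the fundamental domain $[-\rho,\rho]\times[0,1]$ by $1$-periodicity in $y_2$. Arzel\`a--Ascoli extracts a subsequence $u^\eta\to \chi_\rho$ uniformly, and the bound $|\eta w^\eta(0,0)|\le M_\ell$ allows to assume $-\eta w^\eta(0,0)\to \lambda_\rho(z_2,p_2)$. Rewriting the discounted equation as $\eta u^\eta+\tilde H^i(\cdot,Du^\eta+p_2e_2,\cdot)=-\eta w^\eta(0,0)$ and applying Lemma~\ref{lem:stability_for_truncated_cell_problem} yields that $(\chi_\rho,\lambda_\rho)$ solves \eqref{trunc-cellp}, with $\chi_\rho$ inheriting the Lipschitz bound $L(p_2)$.

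For uniqueness of $\lambda_\rho$, I would proceed as follows. If $(\chi_1,\lambda_1)$ is any bounded solution of \eqref{trunc-cellp}, then for every $\eta>0$, $\chi_1$ is a subsolution (resp.\ supersolution) of $\eta u+\tilde H^i(\cdot,Du+p_2e_2,\cdot)=\lambda_1+\eta\|\chi_1\|_\infty$ (resp.\ $=\lambda_1-\eta\|\chi_1\|_\infty$). Comparing with $w^\eta+\lambda_1/\eta$ via Lemma~\ref{th:comparison_truncated_cell_pb} and letting $\eta\to 0$ produces a two-sided bound $\lambda_1=-\lim_\eta \eta w^\eta(0,0)=\lambda_\rho$, so $\lambda_\rho$ is uniquely determined by the problem; in particular the entire family $-\eta w^\eta(0,0)$ converges, not only a subsequence. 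The principal obstacle, as already noted, is the uniform Lipschitz estimate at $\Ga$ and at $\{y_1=\pm\rho\}\times\R$: propagating the coercivity from $\tilde H^i$ to the restricted Hamiltonians $\tilde H^{\pm,i}$ is indirect and relies essentially on the strong controllability assumption $[\widetilde{\rm H}3]_\epsilon$.
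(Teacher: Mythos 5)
Your strategy differs from the paper's in a meaningful way at the existence step, and there is a genuine gap in the key Lipschitz estimate.

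The paper does not build $w^\eta$ by Perron's method. Instead, it defines explicit freezed dynamics $f_{z_2}$ and running costs $\ell_{z_2,p_2}$ (see \eqref{eq:def_dynamic_pb_intermediate}--\eqref{def:ell_pb_oc_truncated}), and sets $v^\eta_\rho$ to be the value function of the state-constrained infinite-horizon problem \eqref{valueR} in $[-\rho,\rho]\times\R$. With this representation, the uniform Lipschitz estimate \eqref{eq:v_R^rho_L_lipschitz} comes for free from the strong controllability deduced from $[\rm H3]$: since $B(0,\delta_0')\subset\{f_{z_2}(y,a):a\in A^i\}$ on the whole strip, one can steer from $y$ to $y'$ along the segment $[y,y']$ (which stays in the convex strip) in time $|y-y'|/\delta_0'$, paying at most $(M_\ell+M_f|p_2|)$ per unit time; that yields $L(p_2)=L_1+L_2|p_2|$ directly, uniformly in $\eta$ and $\rho$, including on $\Ga$ and on $\{y_1=\pm\rho\}$. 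Then $v^\eta_\rho$ is shown (via the cited literature) to solve \eqref{trunc-cellpapprox} with $\lambda_\eta=0$, and the rest of your argument (normalization, Arzel\`a--Ascoli, Lemma~\ref{lem:stability_for_truncated_cell_problem}, uniqueness via Lemma~\ref{th:comparison_truncated_cell_pb}) matches the paper.

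The gap in your version is precisely where you say it is, but the mechanism you invoke does not close it. Coercivity of $\tilde H^i$ (which is what $[\widetilde{\rm H}3]_\epsilon$ gives you) controls $|Dw^\eta|$ at interior points of each half-strip. However, the restricted Hamiltonians $\tilde H^{\pm,i}$ are \emph{not} coercive: by Lemma~\ref{sec:effect-hamilt-gamma-1} and Remark~\ref{sec:fast-slow-variables-1}, $p_1\mapsto\tilde H^{+,R}((0,z_2),p+p_1e_1,y_2)$ is nonincreasing and becomes constant (equal to $\tilde E_0^R$) for $p_1$ large, so the subsolution inequality $\max_i\tilde H^{+,i}\le -\eta w^\eta$ on $\Ga$ gives no upper bound on $\partial_{y_1}w^{\eta,R}$ or lower bound on $\partial_{y_1}w^{\eta,L}$. (The same phenomenon occurs at $\{y_1=\pm\rho\}$ with $\tilde H^{-,i}$.) Moreover, Lemmas~3.5--3.6 of \cite{oudet2014}, which you cite, concern Lipschitz continuity of the Hamiltonians in $(z,p)$, not gradient bounds for subsolutions; the relevant statement is rather \cite[Lemma~2.6]{oudet2014} (Lemma~\ref{sec:prop-visc-sub-3} here), whose proof is itself control-theoretic and is essentially what the paper reuses by working with the value function from the start. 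So either switch to the value-function construction as the paper does, or replace the coercivity step at $\Ga$ and $\{y_1=\pm\rho\}$ by an argument modeled on \cite[Lemma~2.6]{oudet2014} (a sub-/supersolution barrier built from the controllable dynamics), since coercivity alone cannot deliver the one-sided gradient bounds at those boundaries.
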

\begin{proof} With the set $\cM$ defined in (\ref{eq: def-M}), let us consider the new {\sl freezed} dynamics $ f_{z_2}: \cM\to \R^2$ and running costs $ \ell_{z_2,p_2}: \cM\to \R^2$:
  \begin{eqnarray}
\label{eq:def_dynamic_pb_intermediate}
  f_{z_2}(y, a)&=&\left\{
    \begin{array}[c]{l}
      \left(\begin{matrix}
1 & -g'(y_2) \\
0 & 1
\end{matrix}\right)
f^L((0,z_2),a) \quad  \hbox{ if } y_1\le 0, a\in A^L,
\\
 \left(\begin{matrix}
1 & -g'(y_2) \\
0 & 1
\end{matrix}\right)
f^R((0,z_2),a)  \quad \hbox{ if } y_1\ge 0 , a\in A^R, 
    \end{array}\right.
\\
\label{def:ell_pb_oc_truncated}
 \ell_{z_2,p_2}(y, a)&=&
\left\{
    \begin{array}[c]{l}
  f ^L_2((0, z_2),a)p_2+ \ell^L((0, z_2),a)\quad   \hbox{ if } y_1\le 0, a\in A^L, \\
  f ^R_2((0, z_2),a)p_2+ \ell^R((0, z_2),a)  \quad \hbox{ if } y_1\ge 0, a\in A^R,
\end{array}
\right.
\end{eqnarray}
where $f_2$ stands for the second component of $f$.\\
Let $\cT_{z_2,x,\rho}$ be the set of admissible trajectories starting from $y\in (-\rho,\rho)\times \R$ and constrained to $[-\rho,\rho]\times \R$:
\begin{equation}
  \label{eq:2freezed}
\cT_{z_2,y,\rho}=\left\{
  \begin{array}[c]{ll}
    (\zeta_y,a)  \ds \in L_{\rm{loc}}^\infty( \R^+; \cM) : \quad   & \zeta_y\in {\rm{Lip}}(\R^+; [-\rho,\rho]\times \R), 
 \\  &\ds  \zeta_y(t)=y+\int_0^t f_{z_2}( \zeta_y(s), a(s)) ds \quad  \forall t\in \R^+ 
  \end{array}\right\}.
\end{equation}
For any $\eta>0$, the cost associated to the trajectory $(\zeta_y,a)\in\cT_{z_2,y,\rho}$ is
\begin{equation}
\label{cost_functional_R}
 \cJ^\eta_\rho(z_2, p_2, y;( \zeta_y, a) )=\int_0^\infty \ell_{z_2,p_2}(\zeta_y(t), a(t)) e^{-\eta t} dt,
\end{equation}
and we introduce the optimal control problem:
\begin{equation}
\label{valueR}
v^\eta_\rho(z_2, p_2,y)=\inf_{(\zeta_y,a)\in\cT_{z_2,y,\rho}} \cJ^\eta_\rho(z_2, p_2,y;( \zeta_y, a) ).
\end{equation}
Thanks to $[\rm{H}3]$,  we see that if $\delta_0'=\frac{\delta_0}{ \sqrt 2 (1+\parallel g' \parallel_\infty)}$, then $B(0,\delta_0')\subset \{f_{z_2}(y, a), a\in A^i\}$ for  any $i=L,R$, 
$y\in [-\rho,\rho]\times \R$. This strong controlability property can be proved
 in the same manner as  $[\tH3]_\epsilon$ in $\S$ \ref{subsec:straight_optima_control}.  From this, it follows that
for any $y,y'\in [-\rho,\rho]\times \R$,
\begin{equation}
\label{eq:v_R^rho_L_lipschitz}
\vert v^\eta_\rho(z_2, p_2,y)-v^\eta_\rho(z_2, p_2,y')\vert\leq L(p_2)\vert y - y'\vert.
\end{equation}
for some $ L(p_2)=  L_1 + L_2 |p_2|$ with $L_1$, $L_2$ depending on $M_f$, $M_\ell$ $ \delta_0$ and $\parallel g'\parallel_\infty$ but not on $p_2$.
Introducing $\chi_\rho^\eta(z_2, p_2, y)=v^\eta_\rho(z_2, p_2,y)-v^\eta_\rho(z_2, p_2,(0,0))$, 
we deduce from \eqref{valueR} and \eqref{eq:v_R^rho_L_lipschitz} that there exists a constant $K=K(p_2)$ such that 
\begin{equation}
\label{eq: estimate_pho_v}
\begin{array}{l}
\vert \eta v^\eta_\rho(z_2, p_2,y)\vert\leq K,\quad \hbox{and}\quad 
\vert \chi_\rho^\eta(z_2, p_2, y)\vert\leq K.
\end{array}
\end{equation}
From Ascoli-Arzela's theorem, up to the extraction a subsequence, $\chi_\rho^\eta(z_2, p_2, \cdot)$ and $-\eta v^\eta_\rho(z_2, p_2,\cdot)$ converge uniformly  respectively to a Lipschitz function $\chi_\rho(z_2,p_2,\cdot)$ defined on $[-\rho,\rho]\times \R$ (with Lipschitz constant $L$) and to a constant $\lambda_\rho(z_2, p_2)$ as $ \eta\rightarrow 0$.\\
On the other hand, with the  arguments contained in \cite{MR3057137,imbert:hal-00832545,imbert:hal-01073954}, it can be proved that  $v_\rho^\eta(z_2, p_2, \cdot)$ is a viscosity solution of \eqref{trunc-cellpapprox} with $\lambda_\eta=0$. Hence, $\chi_\rho^\eta(z_2, p_2, \cdot)$ is a sequence of viscosity solutions of \eqref{trunc-cellpapprox} for $\lambda_\eta=-\eta v_\rho^\eta(z_2,p_2,(0,0))$, and $\lambda_\eta \to \lambda_\rho(z_2,p_2)$ as $\eta$ tends to $0$. From the stability result in Lemma \ref{lem:stability_for_truncated_cell_problem}, the function $\chi_\rho(z_2,p_2,\cdot)$ is a viscosity solution of \eqref{trunc-cellp}.
Finally, uniqueness can be proved in a classical way using the comparison principle in Lemma \ref{th:comparison_truncated_cell_pb} and the boundedness of $\chi_\rho$.
\end{proof}
\subsubsection{Passage to the limit as $\rho\to +\infty$}
\label{sec:passage-limit-as}
By definition of $\cT_{z_2,y,\rho}$,  it is clear that if $\rho_1\leq \rho_2$, then $\cT_{z_2,y,\rho_1}\subset \cT_{z_2,y,\rho_2}$.
 Then, thanks to \eqref{valueR} and \eqref{eq: estimate_pho_v}, we see that
\begin{equation*}
-\eta v^\eta_{\rho_1}\leq -\eta v^\eta_{\rho_2}\le K,
\end{equation*}
and letting $\eta \rightarrow 0$, we obtain that 
\begin{equation}
\lambda_{\rho_1}(z_2, p_2)\leq \lambda_{\rho_2}(z_2, p_2)\leq K.
\end{equation}
\begin{definition}
We define the effective tangential Hamiltonian $E(z_2, p_2)$ as 
\begin{equation}
\label{eq:def_E}
E(z_2, p_2)=\lim_{\rho\rightarrow \infty} \lambda_{\rho}(z_2, p_2).
\end{equation}
\end{definition}
For $z_2,p_2\in\R$ fixed, we consider the  {\sl global cell-problem}
\begin{equation}
\label{cellpE}
\left\{
    \begin{array}[c]{lll}
    \tilde{H}^i((0,z_2),Du(y)+p_2e_2,y_2)&=E(z_2, p_2)&\hbox{ in } \Omega^i,  \\
\max\left( \tilde{H}^{+,L} ((0,z_2),Du^L(y)+p_2e_2,y_2),
\tilde{H}^{+,R}((0,z_2),Du^R(y)+p_2e_2,y_2)  \right)&=E(z_2, p_2)&\hbox{ on } \Ga,\\
 u \hbox{  is  1-periodic w.r.t.  } y_2,
    \end{array}
\right.
\end{equation}
The following stability result is useful for proving  the existence of a viscosity solution $u$ of the cell-problem \eqref{cellpE}:
\begin{lemma}
\label{thm:stability_from_truncated_cell_pb_to_global_cell_pb}
Let $u_\rho$ be a sequence of uniformly Lipschitz continuous solutions  of the truncated cell-problem \eqref{trunc-cellp} which converges to $u$ 
locally uniformly on $\R^2$. Then $u$ is a viscosity solution of the global cell-problem \eqref{cellpE}.
\end{lemma}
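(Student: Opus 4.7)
The plan is to prove the statement as a classical stability result for viscosity solutions of a junction problem. The strategy exploits the fact that as $\rho \to \infty$ the state-constraint boundaries $\{\pm\rho\}\times\R$ move to infinity, so near any fixed point they play no role; one can then pass to the limit in the viscosity inequalities that $u_\rho$ satisfies locally. The periodicity in $y_2$ of $u$ follows immediately: each $u_\rho$ is $1$-periodic in $y_2$ and the convergence is locally uniform.

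First I would localise. Fix $y_0 \in \R^2$ and a test function $\varphi \in \cR$ such that $u-\varphi$ has a strict local maximum (subsolution case; supersolution case is symmetric) at $y_0$, unique on some closed ball $\overline{B(y_0,r)}$. Choose $\rho_0 \geq |y_{0,1}|+r$, so that for $\rho \geq \rho_0$ one has $\overline{B(y_0,r)} \subset (-\rho,\rho)\times\R$. By uniform convergence of $u_\rho$ to $u$ on $\overline{B(y_0,r)}$, there exist local maximizers $y_\rho$ of $u_\rho - \varphi$ on $\overline{B(y_0,r)}$ with $y_\rho \to y_0$. Since $y_\rho$ stays strictly inside the truncated strip, the only equations of \eqref{trunc-cellp} active at $y_\rho$ are the interior ones (in $\Omega^i$) and the junction one (on $\Ga$), with right-hand side $\lambda_\rho$.

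Next I would pass to the limit. By \eqref{eq:def_E}, $\lambda_\rho \to E(z_2,p_2)$, and Lemma \ref{sec:fast-slow-variables-2} provides the required continuity of the Hamiltonians in $(p,y_2)$. If $y_0 \in \Omega^i$, for $\rho$ large enough $y_\rho \in \Omega^i$ and the limit of the interior viscosity inequality yields the required $\tilde H^i((0,z_2), D\varphi(y_0) + p_2 e_2, y_{0,2})\le E$ (resp. $\ge E$). If $y_0 \in \Ga$, one discriminates according to whether $y_\rho$ lies on $\Ga$ along a subsequence or in one of the open half-planes $\Omega^i$. In the first subcase the junction inequality passes directly. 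In the second one obtains only a bound on $\tilde H^i$ in the limit and must upgrade it to the junction inequality $\max(\tilde H^{+,L}, \tilde H^{+,R}) \leq E$ (resp. $\ge E$) using the monotonicity properties of $\tilde H^{\pm,i}$ in $p_1$ recorded in Remark \ref{sec:fast-slow-variables-1}, together with the decomposition $\tilde H^i = \max(\tilde H^{+,i}, \tilde H^{-,i})$; alternatively, one may restrict the class of test-functions at $\Ga$ to the reduced family of \cite{imbert:hal-01073954}, which the paper already relies on and which simplifies this part considerably.

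The hard part will be exactly this last point at a junction $y_0 \in \Ga$ when the approximating maximizers $y_\rho$ approach $y_0$ from an open half-plane: in the supersolution case, information on $\tilde H^i$ at $y_0$ has to be converted into information on one of the $\tilde H^{+,i}$ via the monotonicity lemma, which is the typical delicate step in junction-type stability arguments. Fortunately, the present case is strictly simpler than the one treated in Lemma \ref{lem:stability_for_truncated_cell_problem}, because no auxiliary parameter $\eta$ has to be sent to zero simultaneously; the scheme of the appendix referenced there therefore applies here with only cosmetic changes.
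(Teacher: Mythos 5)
Your proposal is correct and takes essentially the same approach as the paper, which itself simply states ``proceed exactly in the same way as in the proof of Lemma~\ref{lem:stability_for_truncated_cell_problem}'' (written out in Appendix~\ref{appendix:proof_stability_truncated_cell_pb}); your localisation away from $\{\pm\rho\}\times\R$ and your observation that the absence of the $\eta u^\eta$ term only simplifies matters are both accurate. One small imprecision worth noting: at a junction point where the approximating maximizers come from an open half-plane $\Omega^i$, the \emph{subsolution} inequality for both $i=L$ and $i=R$ is obtained in that appendix via the one-sided distance penalization $\varphi + C\,d_i$ (so that the extremizer is forced into $\overline{\Omega^i}$ for the chosen $i$, after which $\tilde H^{+,i}\le \tilde H^i$ suffices), whereas the monotonicity-in-$p_1$ / slope-modification argument you invoke is really the mechanism used on the \emph{supersolution} side.
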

\begin{proof}
Proceed exactly in the same way as in the proof of Lemma \ref{lem:stability_for_truncated_cell_problem}.
\end{proof}
\begin{theorem}[Existence of a global corrector]
\label{lem:existence_cellpE}
There exists $\chi(z_2, p_2,\cdot)$ a Lipschitz continuous viscosity solution of\eqref{cellpE}  with the same Lipschitz constant $L$ as in \eqref{eq:v_R^rho_L_lipschitz} and
such that $\chi(z_2, p_2,(0,0))=0$.
\end{theorem}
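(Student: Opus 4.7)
The plan is to obtain $\chi(z_2,p_2,\cdot)$ as a limit of the truncated correctors $\chi_\rho(z_2,p_2,\cdot)$ constructed in Lemma \ref{lem:existence_solution_truncated_cell_pb}, using a diagonal Arzel\`a-Ascoli extraction together with the stability result of Lemma \ref{thm:stability_from_truncated_cell_pb_to_global_cell_pb}.

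First, I would fix $z_2, p_2 \in \R$ and recall from the proof of Lemma \ref{lem:existence_solution_truncated_cell_pb} that $\chi_\rho(z_2,p_2,\cdot)$ arises as the uniform limit (as $\eta\to 0$) of $\chi_\rho^\eta = v_\rho^\eta(z_2,p_2,\cdot)-v_\rho^\eta(z_2,p_2,(0,0))$, hence it is a viscosity solution of the truncated cell-problem \eqref{trunc-cellp} that is Lipschitz continuous in $[-\rho,\rho]\times\R$ with the constant $L=L(p_2)$ from \eqref{eq:v_R^rho_L_lipschitz} (independent of $\rho$), is $1$-periodic in $y_2$, and satisfies the normalization $\chi_\rho(z_2,p_2,(0,0))=0$. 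Consequently, for any compact set $[-n,n]\times[0,1]$ with $n\in\N$ and any $\rho>n$,
\begin{equation*}
|\chi_\rho(z_2,p_2,y)| \le L|y| \le L\sqrt{n^2+1} \quad\text{for all } y\in[-n,n]\times[0,1].
\end{equation*}
By periodicity in $y_2$, the same family is also uniformly bounded on $[-n,n]\times\R$ by $L(n+1/2)$.

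Next, I would run a diagonal extraction: on each slab $[-n,n]\times[0,1]$ (which is compact), the family $\{\chi_\rho\}_{\rho>n}$ is uniformly bounded and equi-Lipschitz, so by Arzel\`a-Ascoli there is a subsequence $\rho_k^{(n)}\to\infty$ along which $\chi_{\rho_k^{(n)}}(z_2,p_2,\cdot)$ converges uniformly on $[-n,n]\times[0,1]$. Extracting successive subsequences as $n\to\infty$ and taking the diagonal one, I obtain a sequence $\rho_k\to\infty$ such that $\chi_{\rho_k}(z_2,p_2,\cdot)$ converges uniformly on every $[-n,n]\times[0,1]$ to some function $\chi(z_2,p_2,\cdot)$. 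Because of $1$-periodicity in $y_2$, the convergence is in fact locally uniform on all of $\R^2$. Passing to the limit in the Lipschitz inequality, the normalization, and the periodicity, $\chi(z_2,p_2,\cdot)$ is $L$-Lipschitz, satisfies $\chi(z_2,p_2,(0,0))=0$, and is $1$-periodic in $y_2$.

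Finally, since by \eqref{eq:def_E} the constants $\lambda_\rho(z_2,p_2)$ in the truncated cell-problems converge (monotonically, by the inclusion of admissible trajectory sets) to $E(z_2,p_2)$, the stability statement of Lemma \ref{thm:stability_from_truncated_cell_pb_to_global_cell_pb} applies to the sequence $\chi_{\rho_k}(z_2,p_2,\cdot)$ and its limit $\chi(z_2,p_2,\cdot)$, yielding that $\chi(z_2,p_2,\cdot)$ is a viscosity solution of the global cell-problem \eqref{cellpE}. The main technical subtlety is the passage to the limit on the whole unbounded strip $\R^2$ (handled via the diagonal extraction above) and the verification that the boundary conditions on $\{\pm\rho\}\times\R$ in \eqref{trunc-cellp} disappear in the limit; the latter is automatic because for any fixed test-point $y_0\in\R^2$ one has $y_0\in(-\rho_k,\rho_k)\times\R$ for $k$ large enough, so only the interior and junction equations in \eqref{trunc-cellp} are relevant when checking the viscosity inequalities at $y_0$ in the limit.
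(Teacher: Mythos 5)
Your argument follows the paper's proof almost verbatim: the same truncated correctors $\chi_\rho(z_2,p_2,\cdot)$ from Lemma~\ref{lem:existence_solution_truncated_cell_pb}, the same uniform-Lipschitz-plus-normalization bounds, Arzel\`a--Ascoli to extract a locally uniform limit, and then the stability result of Lemma~\ref{thm:stability_from_truncated_cell_pb_to_global_cell_pb} to pass to the global cell-problem. The only additions on your side (spelling out the diagonal extraction over slabs $[-n,n]\times[0,1]$ and explicitly remarking that the boundary conditions on $\{\pm\rho\}\times\R$ disappear because any fixed point is eventually interior) are details the paper suppresses and leaves implicit in the stability lemma, so this is the same proof rather than a different route.
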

\begin{proof}
Let $\chi_\rho(z_2,p_2,\cdot)$ be the sequence of solutions of \eqref{trunc-cellp} given by Lemma \ref{lem:existence_solution_truncated_cell_pb}. 
Recall that $\chi_\rho(z_2,p_2,\cdot)$ is Lipschitz continuous with  Lipschitz constant $L$ independent of $\rho$ and periodic with respect to $y_2$.
By taking $\chi_\rho(z_2,p_2,\cdot)-\chi_\rho(z_2,p_2,(0,0))$ instead of $\chi_\rho(z_2,p_2,\cdot)$, we may assume that $\chi_\rho(z_2,p_2,(0,0))=0$.
 Thus, $\chi_\rho(z_2,p_2,\cdot)$ is locally bounded and thanks to Ascoli-Arzela's theorem, up to the extraction a subsequence, $\chi_\rho(z_2,p_2,\cdot)$ converges locally uniformy to a  function $\chi(z_2, p_2,\cdot)$, which is Lipschitz continuous and periodic with respect to $y_2$ and satisfies  $\chi(z_2, p_2, (0, 0))=0$.
 Thanks to the stability result in Lemma \ref{thm:stability_from_truncated_cell_pb_to_global_cell_pb}, $\chi(z_2, p_2,\cdot)$ is a viscosity solution of \eqref{cellpE}.
\end{proof}

 \subsubsection{Comparison between  $E_0$ and $E$ respectively defined in (\ref{def:E_0}) and (\ref{eq:def_E})}
For $\epsilon>0$,  let us call $W_\epsilon(z_2, p_2,y)=\epsilon\chi(z_2, p_2,\frac y {\epsilon})$.
 The following result is reminiscent of \cite[Theorem 4.6,iii]{galise:hal-01010512}:
 \begin{lemma}
\label{lem:rescaling_omega} 
 For any $z_2, p_2\in \R$,   
there exists a subsequence    $\epsilon_n$  such that  $W_{\epsilon_n}  (z_2, p_2,\cdot)$ converges locally uniformly to a 
Lipschitz  function $y\mapsto W(z_2, p_2, y)$,  with the Lipschitz constant $L$ appearing in \eqref{eq:v_R^rho_L_lipschitz}. 
This function is constant with respect to $y_2$ and satisfies $W(z_2, p_2,0)=0$. It is  a viscosity solution of 
\begin{equation}
\label{W}
H^i((0,z_2),Du(y)+p_2e_2)=E(z_2, p_2)\quad\hbox{ in } \Omega^i.
\end{equation}
\end{lemma}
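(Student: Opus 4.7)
My plan is to prove Lemma \ref{lem:rescaling_omega} in three steps, following the classical perturbed test function method of Evans \cite{MR1007533}.

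First I would establish compactness. Since Theorem \ref{lem:existence_cellpE} gives that $\chi(z_2,p_2,\cdot)$ is $L$-Lipschitz, the chain rule shows that $W_\epsilon(z_2,p_2,\cdot)$ is also $L$-Lipschitz, with $W_\epsilon(z_2,p_2,0) = \epsilon\chi(z_2,p_2,0) = 0$. By Ascoli--Arzela, a subsequence $W_{\epsilon_n}$ converges locally uniformly to a Lipschitz limit $W$ with constant $L$, satisfying $W(z_2,p_2,0)=0$.

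Next I would show that $W$ is independent of $y_2$. For $y_1$ fixed and $y_2,y_2'\in\R$, using the $1$-periodicity of $\chi$ in its fast second argument to reduce $y_2/\epsilon$ and $y_2'/\epsilon$ modulo $1$, and then the $L$-Lipschitz bound on $\chi$, I would obtain
\[
|W_\epsilon(z_2,p_2,(y_1,y_2))-W_\epsilon(z_2,p_2,(y_1,y_2'))| \le \epsilon L,
\]
which vanishes as $\epsilon \to 0$. Hence $W$ depends only on $y_1$.

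The main step is to identify the equation satisfied by $W$. Because $\chi$ solves \eqref{cellpE}, a direct rescaling shows that $W_\epsilon$ is itself a viscosity solution of
\[
\tilde H^i\bigl((0,z_2),\, DW_\epsilon(y)+p_2 e_2,\, y_2/\epsilon\bigr) = E(z_2,p_2) \qquad \hbox{in } \Omega^i.
\]
To pass to the limit I would apply the perturbed test function method. Given a smooth test function $\varphi$ and a strict local extremum $\bar y \in \Omega^i$ of $W - \varphi$, set $\bar p = D\varphi(\bar y)$. Applying Remark \ref{sec:fast-slow-variables} with $p = \bar p + p_2 e_2$, the one-dimensional corrector $\bar\chi(y_2) = \bar p_1 g(y_2)$ yields the key averaging identity
\[
\tilde H^i\bigl((0,z_2),\, \bar p + p_2 e_2 + \bar p_1 g'(y_2)e_2,\, y_2\bigr) = H^i\bigl((0,z_2),\, \bar p + p_2 e_2\bigr) \qquad \forall y_2\in\R.
\]
I would then define the perturbed test function $\varphi_\epsilon(y) = \varphi(y) + \epsilon \bar p_1 g(y_2/\epsilon)$, which converges uniformly to $\varphi$ as $\epsilon\to 0$, so that $W_\epsilon - \varphi_\epsilon$ has a local extremum at some $\bar y_\epsilon$ with $\bar y_\epsilon \to \bar y$. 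Writing the viscosity sub/super-solution inequality for $W_\epsilon$ at $\bar y_\epsilon$, and using the continuity of $\tilde H^i$ in its first two slots together with the above identity, I would pass to the limit to recover the corresponding inequality between $H^i((0,z_2), \bar p + p_2 e_2)$ and $E(z_2,p_2)$, thereby proving \eqref{W}.

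The hard part is this third step: the rapidly oscillating dependence of $\tilde H^i$ on $g'(y_2/\epsilon)$ must be exactly absorbed by the corrector $\bar\chi$ of Remark \ref{sec:fast-slow-variables}, which is the whole point of Evans's construction. Steps 1 and 2 are soft compactness arguments that exploit the uniform Lipschitz estimate and the $y_2$-periodicity provided by Theorem \ref{lem:existence_cellpE}.
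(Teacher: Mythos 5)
Your proof is correct and follows essentially the same perturbed test function strategy as the paper, relying on the same ingredients: the uniform Lipschitz bound from Theorem~\ref{lem:existence_cellpE}, the $\epsilon$-periodicity in $y_2$, the rescaled equation~\eqref{cellpEepsilon}, and the one-dimensional corrector $\bar\chi(y_2)=\bar p_1\,g(y_2)$ of Remark~\ref{sec:fast-slow-variables}, which is exactly the corrector $\epsilon\,\partial_{y_1}\phi(\bar y)\,g(y_2/\epsilon)$ the paper adds to the test function. The only difference is in how the limit is taken in the last step: you pass to the limit directly through extremum points $\bar y_\epsilon\to\bar y$ of $W_\epsilon-\varphi_\epsilon$ (Evans's original implementation), absorbing the discrepancy between $\partial_{y_1}\varphi(\bar y_\epsilon)$ and $\bar p_1$ via the Lipschitz estimate~\eqref{eq:12}, whereas the paper argues by contradiction, showing that $\phi_\epsilon$ is a strict supersolution of a nearby equation in $B(\bar y,r)$ and invoking a local comparison principle; both are standard and equivalent realizations of the same idea.
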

\begin{proof}
It is clear that $y\mapsto W_\epsilon(z_2, p_2,y)$ is a Lipschitz continuous function with constant $L$ and that $W_\epsilon(z_2, p_2,(0,0))=0$.
Thus, from Ascoli-Arzela's Theorem, we may assume that  $y\mapsto W_\epsilon(z_2, p_2,y)$  converges locally uniformly to some function $y \mapsto W(z_2,p_2,y)$,  up to the extraction of subsequences. The function $y \mapsto W(z_2,p_2,y)$ is Lipschitz continuous  with constant $L$ and  $W(z_2,p_2,(0,0))=0$. Moreover, since $W_\epsilon(z_2, p_2,y)$ is periodic with respect to $y_2$  with period $\epsilon$, $W(z_2,p_2,y)$ does not depend on $y_2$. To prove that $W(z_2,p_2,\cdot)$ is a viscosity solution of \eqref{W},
we first observe   that $y\mapsto W_\epsilon(z_2, p_2,\cdot)$ is a viscosity solution of
\begin{equation}
\label{cellpEepsilon}
    \tilde{H}^i((0,z_2),Du(y)+p_2e_2,\frac {y_2} {\epsilon})=E(z_2, p_2)\quad \hbox{ in } \Omega^i.
\end{equation}
For $i=L,R$, assume that  $\bar y\in \Omega^i$, $\phi \in \cC^1(\Omega^i)$ and $r_0<0$ are such that  $ B(\bar y,r_0)\subset \Omega^i$  and that 
\begin{equation*}
\label{proof:rescaling1}
W(z_2,p_2,y)-\phi(y)<W(z_2,p_2,\bar y)-\phi(\bar y)=0 \mbox{ for  } y\in B(\bar y,r_0)\setminus\{\bar y\}.
\end{equation*}
We wish to prove that
$H^i((0,z_2),D\phi(\bar y)+p_2e_2)\le E(z_2, p_2)$.
Let us argue by contradiction and assume that there exists $\theta>0$ such that
\begin{equation}
\label{proof:rescaling3}
H^i((0,z_2),D\phi(\bar y)+p_2e_2)= E(z_2, p_2)+\theta.
\end{equation}
Take $\phi_\epsilon(y)=\phi(y)+\epsilon\partial_{y_1}\phi(\bar y)g(\frac{y_2}{\epsilon})-\delta$, where $\delta>0$ is a fixed positive number.
 We claim that for $\epsilon>0$ and $r>0$ small enough, $\phi_\epsilon$ is a viscosity supersolution of
\begin{equation}
\label{proof:rescaling4}
\tilde H^i((0,z_2),Du(y)+p_2e_2,\frac{y_2}{\epsilon})\ge E(z_2, p_2)+\frac{\theta}{2} \quad \quad \hbox{ in } B(\bar y,r).
\end{equation}
Indeed $\phi_\epsilon$ is a regular function which satisfies
\begin{displaymath}
\tilde H^i((0,z_2),D\phi_\epsilon(y)+p_2e_2,\frac{y_2}{\epsilon})=H^i\left((0,z_2),D\phi(y)+g'(\frac{y_2}{\epsilon})(\partial_{y_1}\phi(\bar y)-\partial_{y_1}\phi(y))e_2\right),
\end{displaymath}
and we deduce \eqref{proof:rescaling4} from \eqref{proof:rescaling3} and the regularity properties of the Hamiltonian $H^i$.
Hence,  $W_\epsilon(z_2,p_2,\cdot)$ is a subsolution of \eqref{cellpEepsilon} and  $\phi_\epsilon$ is a supersolution of \eqref{proof:rescaling4} in $B(\bar y,r)$.
  Moreover for $r>0$ small enough,
$ \max_{y\in \partial B(\bar y,r)}\left(W(z_2,p_2,y)-\phi(y) \right)<0$. Hence, for $\epsilon>0$ small enough
$\max_{y\in \partial B(\bar y,r)}\left(W_\epsilon(z_2,p_2,y)-\phi_\epsilon(y) \right) \le 0$.\\
Thanks to a standard comparison principle (which holds thanks to the fact that $\frac{\theta}{2}>0$)
\begin{equation}
\label{proof:rescaling6}
\max_{y\in B(\bar y,r)}\left(W_\epsilon(z_2,p_2,y)-\phi_\epsilon(y) \right) \le 0.
\end{equation}
 Letting $\epsilon \to 0$ in \eqref{proof:rescaling6}, we deduce that $W(z_2,p_2,\bar y)\le\phi(\bar y)-\delta$,
which is in contradiction with the assumption.
\end{proof}
 Using Lemma~\ref{lem:rescaling_omega}, it is possible to  compare $E_0(z_2, p_2)$  and $E(z_2, p_2)$  respectively defined in (\ref{def:E_0}) and (\ref{eq:def_E}):
 \begin{proposition}
\label{cor:E_bigger_than_E_0}
For any $z_2,p_2 \in \R$,
\begin{equation}
\label{eq:cor:E_bigger_than_E_0}
E(z_2, p_2)\geq  E_0(z_2, p_2).
\end{equation}
\end{proposition}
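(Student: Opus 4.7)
The strategy is to use the global corrector $W(z_2, p_2, \cdot)$ provided by Lemma~\ref{lem:rescaling_omega}. Recall that $W$ is Lipschitz with constant $L$, is constant with respect to $y_2$, vanishes at the origin, and is a viscosity solution of \eqref{W}, namely $H^i((0, z_2), DW(y) + p_2 e_2) = E(z_2, p_2)$ in $\Omega^i$ for $i = L, R$. I would write $W(y_1, y_2) = w(y_1)$ and first observe that, by restricting the viscosity test functions in the sub/supersolution definition to those depending only on $y_1$, the one-variable function $w$ is a viscosity sub- and supersolution of the reduced equation
\[
H^L((0, z_2), w'(y_1) e_1 + p_2 e_2) = E(z_2, p_2) \quad \text{on } (-\infty, 0),
\]
and of the analogous equation with $H^R$ on $(0, +\infty)$.

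The next step is to combine two classical facts. By Rademacher's theorem, the Lipschitz function $w$ is differentiable almost everywhere on $\R$; in particular there exist differentiability points in each of the open half-lines. At any such point $y_0$, a Lipschitz viscosity solution of a first-order equation satisfies the equation in the classical pointwise sense, because the affine approximation of $w$ at $y_0$, corrected by a smooth bump vanishing to second order at $y_0$, can be used simultaneously as an admissible sub- and super-test-function. Applied at a differentiability point $y_0 \in (-\infty, 0)$, this gives
\[
H^L\bigl((0, z_2), w'(y_0) e_1 + p_2 e_2\bigr) = E(z_2, p_2).
\]
Since $E_0^L(z_2, p_2)$ is by definition the infimum of $q \mapsto H^L((0, z_2), p_2 e_2 + q e_1)$ over $q\in \R$, this immediately yields $E(z_2, p_2) \geq E_0^L(z_2, p_2)$.

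Running the symmetric argument on $(0, +\infty)$ with $H^R$ in place of $H^L$ gives $E(z_2, p_2) \geq E_0^R(z_2, p_2)$, and the desired inequality \eqref{eq:cor:E_bigger_than_E_0} then follows from the definition $E_0 = \max(E_0^L, E_0^R)$. The only delicate point is the passage from the viscosity relation to the classical pointwise identity at a differentiability point; this is the main reason for routing the argument through the rescaled corrector $W$ of Lemma~\ref{lem:rescaling_omega}, whose independence of $y_2$ reduces the comparison with $E_0^i$ to a one-dimensional pointwise statement, rather than attempting to read $E_0^i$ off directly from the fast corrector $\chi$, whose oscillations in $y_2$ would obstruct such a pointwise reading of the Hamiltonian.
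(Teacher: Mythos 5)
Your proposal is correct and follows essentially the same route as the paper: both arguments invoke Lemma~\ref{lem:rescaling_omega} to obtain the rescaled corrector $W(z_2,p_2,\cdot)$, use that a Lipschitz viscosity solution of a first-order equation satisfies the equation at every point of differentiability (hence almost everywhere, by Rademacher), and then read off $E(z_2,p_2)\ge E_0^i(z_2,p_2)$ from the fact that $E_0^i$ is defined as the minimum of $q\mapsto H^i((0,z_2),p_2e_2+qe_1)$. Your explicit reduction to the one-dimensional function $w(y_1)$ and the remark about why one must pass through $W$ rather than the oscillating corrector $\chi$ are merely more detailed expositions of the same steps the paper carries out implicitly.
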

\begin{proof}
 Let $i\in \{L,R\}$ be fixed. Thanks to Lemma \ref{lem:rescaling_omega}, the function $y\mapsto W(z_2,p_2,y)$ is a viscosity solution of \eqref{W} in $\Omega^i$.
Therefore,   \eqref{W} is satisfied by $W (z_2,p_2,\cdot)$ almost everywhere. Keeping in mind that  $W(z_2,p_2,y)$ is independent of $y_2$, we see that  for almost all $y\in \Omega^i$, $E(z_2, p_2)=H^i((0,z_2),\partial_{y_1}W(z_2,p_2,y_1)e_1+p_2e_2)\ge E_0^i(z_2,p_2)$.
\end{proof}
From Proposition \ref{cor:E_bigger_than_E_0} and the coercivity of the Hamiltonian $H^i$, the following numbers are well defined for all  $z_2, p_2 \in \R$:
\begin{eqnarray}
 \label{eq:5}
\overline{\Pi}^L(z_2,p_2)\!= \!\max\left\lbrace p\in \R : H^L((0,z_2), p_2e_2+pe_1)=H^{-,L}((0,z_2), p_2e_2+pe_1)=E(z_2,p_2) \right\rbrace\\
\label{eq:6}
\widehat{\Pi}^L(z_2,p_2)\!= \!\min\left\lbrace p\in \R : H^L((0,z_2), p_2e_2+pe_1)= H^{-,L}((0,z_2), p_2e_2+pe_1)=E(z_2,p_2) \right\rbrace\\
\label{eq:9}
\overline{\Pi}^R(z_2,p_2)\!= \!\min\left\lbrace p\in \R : H^R((0,z_2), p_2e_2+pe_1)=H^{-,R}((0,z_2), p_2e_2+pe_1)=E(z_2,p_2) \right\rbrace\\
\label{eq:10}
\widehat{\Pi}^R(z_2,p_2)\!= \!\max\left\lbrace p\in \R : H^R((0,z_2), p_2e_2+pe_1)=H^{-,R}((0,z_2), p_2e_2+pe_1)=E(z_2,p_2) \right\rbrace
\end{eqnarray}
\begin{remark}\label{sec:comp-betw-e_0}
In \S~\ref{sec:furth-results-corr}, see in particular Corollaries \ref{cor:slopes_omega} and \ref{cor:control_slopes_W} below,  we will see that the function $W$ which is defined in Lemma~\ref{lem:rescaling_omega} and  provides information on the   growth of $y\mapsto \chi(z_2,p_2,y)$ as $|y_1|\to \infty$, satisfies
\begin{equation*}
\overline{\Pi}^L(z_2,p_2) y_1 1_{\Omega^L}+\overline{\Pi}^R(z_2,p_2) y_1 1_{\Omega^R}\le W(z_2,p_2,y) \le  \widehat{\Pi}^L(z_2,p_2) y_1 1_{\Omega^L}+ \widehat{\Pi}^R(z_2,p_2) y_1 1_{\Omega^R}.
\end{equation*}    
These growth properties at infinity show that $\chi(z_2,p_2,\cdot)$ is precisely the corrector associated to the reduced set of test-functions proposed by Imbert and Monneau in \cite{imbert:hal-00832545,imbert:hal-01073954}, see \S~\ref{def:test_functions_set_restricted} below.
\end{remark}

\begin{remark}
\label{rmk:equality_bar_Pi_and_hat_Pi}
From the  convexity  of the Hamiltonians $H^i$ and $H^{-,i}$, we deduce that if $E^i_0(z_2,p_2)<E(z_2,p_2)$,  then 
$\overline{\Pi}^i(z_2,p_2)=\widehat{\Pi}^i(z_2,p_2)$. In this case, we will use the notation
\begin{equation}
\label{eq:special_notation_for_Pi}
\Pi^i(z_2,p_2)=\overline{\Pi}^i(z_2,p_2)=\widehat{\Pi}^i(z_2,p_2).
\end{equation}
\end{remark}

 \section{Further properties of the correctors}
\label{sec:furth-results-corr}
In this section, we prove further growth properties of the correctors, which will be useful in the proof of convergence in \S~\ref{sec:proof-main-result} below, see Remark~\ref{sec:comp-betw-e_0}.
We start by stating a useful comparison principle related to a mixed boundary value problem:
 \begin{lemma}
   \label{sec:furth-prop-corr}
Take $0<\rho_1<\rho_2$, $z_2,p_2,\lambda\in \R$, a continuous function $U_0: \R\to \R$ and  $\epsilon_0>0$.
Let $v$ be a  continuous viscosity supersolution of 
\begin{equation}
\label{eq:def_mixed_boundary_value_problem_super}
\left\lbrace
\begin{array}{ll}
    \tilde{H}^R((0,z_2),Dv(y)+p_2e_2,y_2)  \ge \lambda, & y=(y_1,y_2)\in (\rho_1,\rho_2)\times \R,\\
      \tilde{H}^{-,R}((0,z_2),Dv(y)+p_2e_2,y_2)  \ge \lambda, & y=(y_1,y_2)\in \{\rho_2\}\times \R,\\
      v(y)  \ge U_0(y_2), & y=(y_1,y_2)\in \{\rho_1\}\times \R,\\
      v \hbox{ is 1-periodic w.r.t. } y_2,
\end{array}
\right.
\end{equation}
and  $u$ be a continuous  viscosity subsolution of 
\begin{equation}
\label{eq:def_mixed_boundary_value_problem_sub}
\left\lbrace
\begin{array}{ll}
    \tilde{H}^R((0,z_2),Du(y)+p_2e_2,y_2)  \le \lambda-\epsilon_0, & y=(y_1,y_2)\in (\rho_1,\rho_2)\times \R,\\
      \tilde{H}^{-,R}((0,z_2),Du(y)+p_2e_2,y_2)  \le \lambda-\epsilon_0, & y=(y_1,y_2)\in \{\rho_2\}\times \R,\\
      u(y)  \le U_0(y_2), & y=(y_1,y_2)\in \{\rho_1\}\times \R,\\
      u \hbox{ is 1-periodic w.r.t. } y_2,
\end{array}
\right.
\end{equation}
where  the inequalities on $y_1=\rho_1$  are understood  pointwise.
Then, $u\le v$ in $[\rho_1,\rho_2]\times \R$.
 \end{lemma}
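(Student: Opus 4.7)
The plan is to follow the standard doubling-of-variables comparison argument, treating the state-constraint boundary at $y_1=\rho_2$ by the same strategy as in \cite[Theorem~3.4]{oudet2014}, which has already been invoked for Lemma~\ref{th:comparison_truncated_cell_pb} and Theorem~\ref{sec:comparison-principle-1}. Using $y_2$-periodicity I work on the compact cylinder $\mathcal{K} := [\rho_1,\rho_2]\times\T$. Since $0<\rho_1<\rho_2$, $\mathcal{K}$ sits entirely in $\overline{\Omega^R}$, so only $\tilde H^R$ and $\tilde H^{-,R}$ appear and no interface condition at $y_1=0$ is involved. The Dirichlet-type inequalities $u\le U_0\le v$ on $\{\rho_1\}\times\R$ force any maximizer of $u-v$ with positive value to have first coordinate strictly greater than $\rho_1$.

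Assume for contradiction $M:=\max_{\mathcal{K}}(u-v)>0$. For small $\eta>0$, the classical penalized functional
\[
\Phi_\eta(y,y') := u(y) - v(y') - \frac{1}{2\eta}|y-y'|^2
\]
attains its maximum on $\mathcal{K}\times\mathcal{K}$ at some $(y_\eta,y'_\eta)$, with $y_\eta,y'_\eta\to\bar y$ and $|y_\eta-y'_\eta|^2/\eta\to 0$; set $p_\eta:=(y_\eta-y'_\eta)/\eta$. For $\eta$ small, both first coordinates lie in $(\rho_1,\rho_2]$. In the easy case where $(y_\eta)_1,(y'_\eta)_1$ are both in $(\rho_1,\rho_2)$ or both equal to $\rho_2$, the sub- and super-solution inequalities involve the same Hamiltonian ($\tilde H^R$ or $\tilde H^{-,R}$) at the common momentum $p_\eta+p_2 e_2$; subtracting them and using the Lipschitz dependence of $\tilde H^R$ and $\tilde H^{-,R}$ on $y_2$ (which holds because $g\in\mathcal{C}^2$, hence $g'$ is Lipschitz, and the $y_2$-dependence enters only through $\tilde J(y_2)$) yields
\[
\epsilon_0 \le C\,|p_\eta|\,|(y_\eta)_2-(y'_\eta)_2| \le C\,\frac{|y_\eta-y'_\eta|^2}{\eta} \longrightarrow 0,
\]
a contradiction.

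The remaining case is when exactly one of $y_\eta,y'_\eta$ lies on $\{\rho_2\}\times\R$, so that the subsolution inequality uses the restricted Hamiltonian $\tilde H^{-,R}$ while the supersolution uses $\tilde H^R$ (or vice versa); these two Hamiltonians are not directly comparable, and this is the main difficulty. The resolution relies on the strong controllability $[\tH3]_\epsilon$ (inward dynamics of speed at least $\tilde\delta_0$ available everywhere) exactly as in \cite[Theorem~3.4]{oudet2014}: strong controllability forces $p_1\mapsto\tilde H^{-,R}((0,z_2),p_1 e_1 + p_2 e_2,y_2)$ to grow at least linearly with slope $\tilde\delta_0$ above a threshold, which bounds the normal component of $p_\eta$. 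The argument is then closed by a Soner-type perturbation of the supersolution, e.g.\ replacing $v$ with $v_\gamma(y):=v(y)+\gamma y_1$: by the Lipschitz bound of Lemma~\ref{sec:fast-slow-variables-2}, $v_\gamma$ remains a supersolution of the interior equation at level $\lambda-C\gamma$; the monotonicity of $p_1\mapsto\tilde H^{-,R}$ recorded in Remark~\ref{sec:fast-slow-variables-1} preserves the boundary supersolution inequality at $\{\rho_2\}\times\R$; and on $\{\rho_1\}\times\R$ we still have $u\le U_0\le v_\gamma$. Choosing $\gamma$ small enough that $C\gamma<\epsilon_0$ preserves the strict gap, and the extra gradient information brought by the perturbation suffices to rule out the mixed configuration, reducing matters to the previous case.

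The main obstacle is this last case, i.e.\ the state-constraint/restricted-Hamiltonian situation at $y_1=\rho_2$; all the other ingredients are routine penalization. That this obstacle can be handled is ensured by $[\tH3]_\epsilon$ and the monotonicity properties of $\tilde H^{-,R}$ established earlier in the paper, so invoking \cite[Theorem~3.4]{oudet2014} directly is legitimate.
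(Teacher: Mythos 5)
The paper does not actually give a proof of this lemma: it simply writes ``the proof is rather classical, and follows the lines of \cite{MR1484411} Theorem IV.5.8.\ We skip it for brevity.'' So there is no written proof to compare to; what can be assessed is whether your argument is correct.

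Your reduction to the compact cylinder $[\rho_1,\rho_2]\times\T$, the doubling of variables, and the treatment of the two ``clean'' configurations (both $y_\eta$, $y'_\eta$ in $(\rho_1,\rho_2)$, or both on $\{\rho_2\}$, both testing the \emph{same} Hamiltonian with the same slope $p_\eta$ and landing the contradiction via the Lipschitz dependence in $y_2$ of $\tilde J$ and the $\epsilon_0$-gap) are all fine, and you correctly isolate the mixed configuration at the state-constraint boundary $y_1=\rho_2$ as the genuine obstacle. The gap is in how you then dispose of it. Replacing $v$ by $v_\gamma(y)=v(y)+\gamma y_1$ changes the \emph{supersolution} test slope from $p_\eta$ to $p_\eta-\gamma e_1$, which by the nondecreasing monotonicity of $p_1\mapsto\tilde H^{-,R}$ only makes the boundary supersolution inequality easier, and in the interior costs you $M_f\gamma$ in the level; none of this touches the subsolution side. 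The problematic configuration is $y_\eta\in\{\rho_2\}\times\T$, $y'_\eta$ interior: there you only have $\tilde H^{-,R}((0,z_2),p_\eta+p_2e_2,(y_\eta)_2)\le\lambda-\epsilon_0$ (a \emph{weaker} statement than the interior $\tilde H^R$ one, since $\tilde H^{-,R}\le\tilde H^R$) against $\tilde H^R((0,z_2),p_\eta+p_2e_2+O(\gamma),(y'_\eta)_2)\ge\lambda-O(\gamma)$, and since $(p_\eta)_1>0$ is not a priori above the threshold $\tilde p_0^{-,R}$ past which $\tilde H^{-,R}=\tilde H^R$, no contradiction follows from monotonicity or Lipschitzness alone. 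In short, the perturbation of $v$ does not move the subsolution point $y_\eta$ off the state-constraint boundary, nor does it lift the normal component of $p_\eta$ above the threshold; the sentence ``the extra gradient information brought by the perturbation suffices to rule out the mixed configuration'' is exactly the missing step, and as written it is not true.

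The classical way to close this (and what underlies \cite{MR1484411} Thm.\ IV.5.8 and Soner's original argument) is to shift the \emph{doubling penalty}, not the supersolution: use
\begin{equation*}
\Phi_\eta(y,y')=u(y)-v(y')-\frac{1}{2\eta}\bigl|y-y'+\eta\alpha e_1\bigr|^2 ,
\end{equation*}
with $\alpha>0$ fixed. The coercivity of $\tilde H^R$ in the normal direction, which comes from the strong controllability $[\tH3]_\epsilon$ (or its frozen analogue used in the proof of Lemma~\ref{lem:existence_solution_truncated_cell_pb}), makes the subsolution $u$ Lipschitz with a computable constant $L_u$, and then the inequality $\Phi_\eta(y_\eta,y'_\eta)\ge\Phi_\eta(y'_\eta-\eta\alpha e_1,y'_\eta)$ gives $|y_\eta-y'_\eta+\eta\alpha e_1|\le 2\eta L_u$; choosing $\alpha>2L_u$ forces $(y_\eta)_1<(y'_\eta)_1\le\rho_2$, i.e.\ the subsolution point is strictly interior and the mixed configuration never occurs. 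Alternatively one can keep the unshifted penalty, observe that in the mixed case $(p_\eta)_1\ge\alpha$ after the shift (so $\tilde H^{-,R}$ and $\tilde H^R$ coincide once $\alpha$ exceeds the relevant threshold, which is bounded because $|(p_\eta)_2|\le L_u$), and then the two Hamiltonians match again. Either way, the essential mechanism is a shift in the penalization by a \emph{fixed} vector of length $O(\eta)$, combined with the Lipschitz bound on $u$; the additive perturbation of $v$ you propose is not a substitute for it.
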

The proof is rather classical, and follows the lines of  \cite{MR1484411} Theorem IV.5.8. We skip it for brevity.
 \begin{remark}
   \label{sec:furth-results-corr-1}
From \cite[Proposition~2.14]{imbert:hal-00832545}, we know  that a bounded lsc function $v$ is a supersolution of \eqref{eq:def_mixed_boundary_value_problem_super} if and only if it is a supersolution of
 \begin{displaymath}
 \left\lbrace
\begin{array}{ll}
    \tilde{H}^R((0,z_2),Dv(y)+p_2e_2,y_2)  \ge \lambda, & y=(y_1,y_2)\in (\rho_1,\rho_2]\times \R,\\
      v(y)  \ge U_0(y_2), & y=(y_1,y_2)\in \{\rho_1\}\times \R,\\
      v \mbox{ is 1-periodic w.r.t. } y_2,
\end{array}
\right.
\end{displaymath}
and that a bounded usc function $u$ is a subsolution of \eqref{eq:def_mixed_boundary_value_problem_sub} if and only if it is a subsolution of
\begin{displaymath}
 \left\lbrace
\begin{array}{ll}
    \tilde{H}^R((0,z_2),Du(y)+p_2e_2,y_2)  \le \lambda-\epsilon_0, & y=(y_1,y_2)\in (\rho_1,\rho_2)\times \R,\\
      u(y)  \le U_0(y_2), & y=(y_1,y_2)\in \{\rho_1\}\times \R,\\
      u \mbox{ is 1-periodic w.r.t. } y_2.
\end{array}
\right.
\end{displaymath}
In other words, the boundary conditions on $y_1=\rho_2$ correspond to state constraints.
 \end{remark}

 \begin{remark}
 \label{rmk:about_mixed_boundary_value_problem}
 There is of course a similar result for the mirror boundary value problem posed in
 $[-\rho_2,-\rho_1]\times \R \subset \Omega^L$  with  the Hamiltonian $\tilde{H}^L$ instead of $\tilde{H}^R$,  a Dirichlet condition on  $y_1=-\rho_1$ and a state constrained boundary condition on $y_1=-\rho_2$ (i.e. involving $\tilde{H}^{-,L}$).
 \end{remark}
\begin{proposition}[Control of slopes on the truncated domain]
\label{SLOPESLemma1}
 With $E$ and $E^R_0$ respectively defined in~\eqref{eq:def_E} and \eqref{def:E_0^i},  let $z_2,p_2\in\R$ be such that 
$E(z_2, p_2)>E_0^R(z_2,p_2)$.
There exists $\rho^*=\rho^*(z_2, p_2)>0$, $\delta^*=\delta^*(z_2,p_2)>0$, \mbox{$m(\cdot)=m(z_2,p_2,\cdot):\R_+ \to \R_+$} satisfying $\lim_{\delta \to 0^+}m(\delta)=0$ and $M^*=M^*(z_2,p_2)$ such that for all  $\delta\in (0,\delta^*]$, $\rho\ge \rho^*$, $y=(y_1,y_2)\in [\rho^*,\rho]\times \R$, $h_1\in [0,\rho-y_1]$ and $h_2\in \R$,
\begin{equation}
\label{slope3}
\chi_\rho(z_2, p_2, y+h_1e_1+h_2 e_2)-\chi_\rho(z_2, p_2, y)\geq  (\Pi^R(z_2,p_2)-m(\delta)) h_1-M^*,
\end{equation}
where 
$\Pi^R(z_2,p_2)$ is  given by \eqref{eq:special_notation_for_Pi} and $\chi_\rho(z_2,p_2,\cdot)$ is a solution of \eqref{trunc-cellp} given by Lemma \ref{lem:existence_solution_truncated_cell_pb}.

\medskip

 Similarly, let $z_2,p_2\in \R$ be such that  $E(z_2, p_2)>E_0^L(z_2,p_2)$.
There exists $\rho^*>0$, $\delta^*>0$, $m(\cdot):\R_+ \to \R_+$ and $M^*$ as above, such that
 for all  $\delta\in (0,\delta^*]$, $\rho\ge \rho^*$, $y=(y_1,y_2)\in [-\rho,-\rho^*]\times \R$, $h_1\in [0,\rho+y_1]$ and $h_2\in \R$,
\begin{equation}
\label{slope3_bis}
\chi_\rho(z_2, p_2, y-h_1e_1+h_2 e_2)-\chi_\rho(z_2, p_2, y)\geq  -(\Pi^L(z_2,p_2)+m(\delta)) h_1-M^*.
\end{equation}
\end{proposition}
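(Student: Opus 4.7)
The strategy is to apply the mixed comparison principle of Lemma~\ref{sec:furth-prop-corr} on the truncated strip $[\rho^*, \rho] \times \R$, between $\chi_\rho$ viewed as a supersolution and a family of \emph{sliding} strict subsolutions whose $y_1$-slope is prescribed to be $\Pi^R(z_2, p_2) - m(\delta)$, the sliding parameter being an arbitrary base point $y \in [\rho^*, \rho] \times \R$. Evaluating the resulting comparison inequality at $\tilde y = y + h_1 e_1 + h_2 e_2$ will then directly produce \eqref{slope3}.

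For $y = (y_1, y_2) \in [\rho^*, \rho] \times \R$ I take
\[
 u_\delta^y(\tilde y) := (\Pi^R(z_2, p_2) - m(\delta))(\tilde y_1 - y_1) + (\Pi^R(z_2, p_2) - m(\delta))\bigl(g(\tilde y_2) - g(y_2)\bigr) + \chi_\rho(z_2, p_2, y) - M^{**}.
\]
The $g$-modulation in the middle term is engineered so that $\tilde J(\tilde y_2)^T (Du_\delta^y + p_2 e_2) = (\Pi^R - m) e_1 + p_2 e_2$, independently of $\tilde y_2$. Using the identity $\tilde H^R((0, z_2), q, \tilde y_2) = H^R((0, z_2), \tilde J(\tilde y_2)^T q)$ (a direct consequence of \eqref{explicitelyfast}), this cancellation yields
\[
 \tilde H^R\bigl((0, z_2), Du_\delta^y + p_2 e_2, \tilde y_2\bigr) = H^R\bigl((0, z_2), (\Pi^R - m) e_1 + p_2 e_2\bigr)
\]
for every $\tilde y_2$, and the trivial bound $\tilde H^{-,R} \le \tilde H^R$ takes care of the state-constrained face $\{\tilde y_1 = \rho\}$. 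The hypothesis $E(z_2, p_2) > E_0^R(z_2, p_2)$, together with Lemma~\ref{sec:effect-hamilt-gamma-1}(3) and the convexity of $H^R$, places $\Pi^R(z_2, p_2)$ strictly on the increasing branch of $p \mapsto H^R((0, z_2), p_2 e_2 + p e_1)$; hence one may select $\delta^* > 0$, a modulus $m(\cdot)$ with $m(\delta) \to 0$, and $\epsilon_0(\delta) > 0$ such that the right-hand side above is $\le E(z_2, p_2) - 2\epsilon_0(\delta)$ for $\delta \in (0, \delta^*]$. Combined with $\lambda_\rho \uparrow E(z_2, p_2)$ (cf.~\eqref{eq:def_E}), this makes $u_\delta^y$ a strict subsolution with right-hand side $\lambda_\rho - \epsilon_0$ on $(y_1, \rho) \times \R$ for all $\rho \ge \rho^*$.

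It remains to verify the Dirichlet bound $u_\delta^y(y_1, \tilde y_2) \le \chi_\rho(z_2, p_2, (y_1, \tilde y_2))$ for every $\tilde y_2 \in \R$, which reduces to
\[
 \chi_\rho(z_2, p_2, (y_1, \tilde y_2)) - \chi_\rho(z_2, p_2, y) \ge (\Pi^R - m)\bigl(g(\tilde y_2) - g(y_2)\bigr) - M^{**}.
\]
The crucial observation is that both sides are $1$-periodic in $\tilde y_2$ (using the periodicity of $g$ and the periodicity of $\chi_\rho$ given by Lemma~\ref{lem:existence_solution_truncated_cell_pb}), so one may restrict to $|\tilde y_2 - y_2| \le 1/2$. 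The uniform Lipschitz bound of Lemma~\ref{lem:existence_solution_truncated_cell_pb} then controls the left-hand side below by $-L/2$, while the right-hand side is bounded above by $2|\Pi^R|\,\|g\|_\infty$; taking $M^{**} := L/2 + 2|\Pi^R(z_2, p_2)|\,\|g\|_\infty$ settles the matter \emph{uniformly in $y$ and $\rho$}. Lemma~\ref{sec:furth-prop-corr} then delivers $u_\delta^y \le \chi_\rho$ throughout $[y_1, \rho] \times \R$; evaluating at $\tilde y = y + h_1 e_1 + h_2 e_2$ (permitted since $h_1 \le \rho - y_1$) yields \eqref{slope3} once the bounded term $(\Pi^R - m)(g(y_2 + h_2) - g(y_2))$ is absorbed into $M^* := M^{**} + 2|\Pi^R(z_2, p_2)|\,\|g\|_\infty$. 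The mirror bound \eqref{slope3_bis} is proven verbatim on $[-\rho, -\rho^*] \times \R$ using the symmetric parts of Lemma~\ref{sec:effect-hamilt-gamma-1} and Remark~\ref{rmk:about_mixed_boundary_value_problem}.

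The principal obstacle I foresee is precisely this uniform-in-$y$ boundary matching: a direct use of the Lipschitz bound alone would contribute a constant that grows with $|y|$ and would spoil the clean slope $\Pi^R - m(\delta)$. The $y_2$-periodicity of $\chi_\rho$ is the indispensable ingredient that confines $M^{**}$, and therefore $M^*$, to depend only on $(z_2, p_2)$.
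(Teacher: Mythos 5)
Your strategy is the paper's strategy: invoke the mixed comparison principle of Lemma~\ref{sec:furth-prop-corr} on $[\bar y_1,\rho]\times\R$, comparing $\chi_\rho$ (supersolution with Dirichlet data on $y_1=\bar y_1$ supplied by the Lipschitz bound and the $1$-periodicity in $y_2$) against an explicit affine-in-$y_1$ subsolution modulated by $g(y_2)$ to cancel the off-diagonal term of $\tilde J(y_2)^T$. The calculation of $\tilde J^T(Du+p_2e_2)$, the observation $\tilde H^{-,R}\le\tilde H^R$ for the state-constrained face, and the use of $y_2$-periodicity to make the Dirichlet constant uniform in $y$ all match the paper.

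The one place where you diverge from the paper, and where I would flag a genuine, if subtle, gap, is the calibration of the slope. You fix the slope $\Pi^R-m(\delta)$ by reference to the \emph{limit} Hamiltonian $E(z_2,p_2)$, asking $H^R((0,z_2),(\Pi^R-m(\delta))e_1+p_2e_2)\le E-2\epsilon_0(\delta)$, and then need $\lambda_\rho\ge E-\epsilon_0(\delta)$ to downgrade this to $\le\lambda_\rho-\epsilon_0(\delta)$. Since $\lambda_{\rho^*}<E$ and $\epsilon_0(\delta)\to 0$ as $\delta\to 0$, there is no \emph{fixed} $\rho^*$ for which this works for all $\delta\in(0,\delta^*]$: your $\rho^*$ would have to grow as $\delta\to 0$, contradicting the statement's quantifier order. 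The paper sidesteps this by defining the slope $q_\delta$ \emph{per} $\rho$, via $H^R((0,z_2),q_\delta e_1+p_2e_2)=\lambda_\rho-\delta$, so that the strict subsolution inequality holds identically (with gap exactly $\delta$) for every $\rho\ge\rho^*$; the price is that $q_\delta$ and hence $m(\delta)=\Pi^R-q_\delta$ implicitly depend on $\rho$ (one should read $m(\delta,\rho)$, with the $\rho$-uniform choice $m(\delta)=\Pi^R-q_{\delta,\rho^*}$ satisfying $m(\delta,\rho)\le m(\delta)$; this $\rho$-uniform $m$ does \emph{not} tend to $0$ as $\delta\to 0$, so the paper's own statement is mildly imprecise, but Corollary~\ref{cor:slopes_omega} is recovered by first sending $\rho\to\infty$ then $\delta\to 0$). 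In short: your construction is correct, but to match the claimed quantifiers you should, as the paper does, tie the subsolution slope to $\lambda_\rho-\delta$ rather than to $E-2\epsilon_0(\delta)$, accepting that the resulting $m$ is $\rho$-dependent and extracting the clean slope only in the limit of the corollary.
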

\begin{proof}
Let us focus on \eqref{slope3} since the proof of \eqref{slope3_bis} is similar. Recall that $\rho\mapsto \lambda_\rho(z_2,p_2)$ is nondecreasing and tends to $E(z_2,p_2)$ as $\rho\to+\infty$. Choose $\rho^*=\rho^*(z_2,p_2)>0$ such that for any $\rho\ge \rho^*$, $E(z_2, p_2)>\lambda_\rho(z_2,p_2)>E_0^R(z_2,p_2)$.
 Then, choose $\delta^*=\delta^*(z_2,p_2)>0$ such that for any $\delta\in (0,\delta^*]$, $\lambda_\rho(z_2,p_2)-\delta>E_0^R(z_2,p_2)$.
\begin{figure}[H]
\begin{center}
  \begin{tikzpicture}[scale=0.60]
\draw (-1,1.5) -- (2,1.5);
\draw (2,1.5) ..controls +(2,0) and +(-0.5,-2).. (7,6);
\draw (9.7,6) node[right,above]{$q\mapsto H^{-,R}((0, z_2),p_2e_2+q e_1)$} ;
\draw[dashed] (-1,5.3) -- (8,5.3);
\draw (8,5.3) node[right]{$E(z_2,p_2)$};
\draw[dashed] (2,1.5) -- (8,1.5);
\draw (8,1.5) node[right]{$E_0^R(z_2,p_2)$};
\draw[dashed] (-1,4.2) -- (8,4.2);
\draw (8,4.2) node[right]{$\lambda_{\rho^*}(z_2,p_2)$};
\draw[dashed] (-1,2.2) -- (8,2.2);
\draw (8,2.2) node[right]{$\lambda_{\rho^*}(z_2,p_2)-\delta^*$};
\draw[dashed] (-1,2.9) -- (8,2.9);
\draw (8,2.9) node[right]{$\lambda_{\rho}(z_2,p_2)-\delta$};
\draw[dashed] (6.78,0.5) -- (6.78,5.3);
\draw (6.78,0.5) node[below]{$\Pi^R(z_2,p_2)$};
\draw[dashed] (4.98,0.5) -- (4.98,2.9);
\draw (4.98,0.4) node[below]{$q_\delta$};
\draw[dashed] (4.09,0.5) -- (4.09,2.2);
\draw (4.09,0.4) node[below]{$q_{\delta^*}$};
\end{tikzpicture}
\caption{Construction of $\rho^*,\delta^*$ and $q_\delta$}
    \label{fig:proof_slop_properties}
\end{center}
\end{figure}
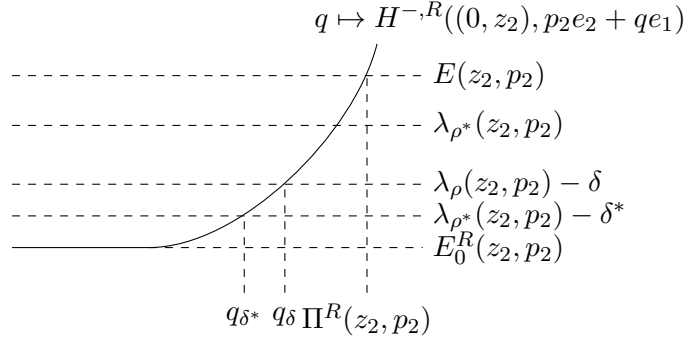
Let us fix $\rho> \rho^*$, $\delta\in (0,\delta^*]$ and $\bar y=(\bar y_1,\bar y_2)\in [\rho^*,\rho]\times \R$. Consider $y \mapsto \chi_\rho(z_2,p_2,y)$ 
a solution of \eqref{trunc-cellp} as in Lemma \ref{lem:existence_solution_truncated_cell_pb}. The function $\chi_\rho(z_2,p_2,\cdot)$ is 1-periodic 
with respect to $y_2$ and Lipschitz continuous with constant $L=L(p_2)$. Thus, for any $y\in \{\bar y_1\}\times \R$ 
\begin{equation*}
\chi_ \rho (z_2,p_2,y)-\chi_\rho (z_2,p_2,\bar y) \geq -L.
\end{equation*}
Therefore, $y\mapsto \chi_\rho(z_2,p_2,y)-\chi_\rho (z_2,p_2,\bar y)$ is a supersolution of 
\begin{equation}
\label{eq:proof_lem_control_slopes_bounded_value_pb_super}
\left\lbrace
\begin{array}{ll}
    \tilde{H}^R((0,z_2),Dv(y)+p_2e_2,y_2)  \ge \lambda_\rho(z_2,p_2), & y\in (\bar y_1,\rho)\times \R,\\
      \tilde{H}^{-,R}((0,z_2),Dv(y)+p_2e_2,y_2)  \ge \lambda_\rho(z_2,p_2), & y\in \{\rho\}\times \R,\\
      v(y)  \ge -L, & y\in \{\bar y_1\}\times \R,\\
      v \hbox{ is 1-periodic w.r.t. } y_2.
\end{array}
\right.
\end{equation}
Since $\rho\ge \rho^*$  and $\delta\in (0,\delta^*]$, there exists a unique $q_\delta\in \R$, see Figure \ref{fig:proof_slop_properties}, such that 
\begin{equation}
\label{slope1}
\lambda_\rho(z_2, p_2)-\delta=H^R((0, z_2),p_2e_2+q_\delta e_1)=H^{-,R}((0, z_2),p_2e_2+q_\delta e_1).
\end{equation}
We observe that $q_{\delta^*}\le q_\delta \le \Pi^R(z_2,p_2)$ and that $q_\delta$ tends to $\Pi^R(z_2,p_2)$ as $\delta$ tends to $0$.
 Choose $ m(\delta)=\Pi^R(z_2,p_2)-q_\delta\ge 0$ and consider the function $w^R$: $w^R(y)=q_\delta(y_1+g(y_2))$, which is of class  $\cC^2$. 
From the choice of $q_\delta$, for any $y\in \R^2$,
\begin{equation}
\label{eq:proof_lem_control_slopes_relation_satisfies_by_w^2}
\tilde{H}^{-,R}(z_2,Dw^R(y)+p_2e_2,y_2)\le \tilde{H}^R(z_2,Dw^R(y)+p_2e_2,y_2)=\lambda_\rho(z_2, p_2)-\delta,
\end{equation}
and for any $y\in \{\bar y_1\}\times \R$,
\begin{displaymath}
w^R(y)-q_\delta \bar y_1\le |q_\delta|\parallel g\parallel_\infty\le \max\left( |q_{\delta^*}|, |\Pi^R(z_2,p_2)| \right) \parallel g\parallel_\infty=C= C(z_2,p_2).
\end{displaymath}
Therefore, as stated in Remark~\ref{sec:furth-results-corr-1}, the subsolution property holds up to the boundary $y_1=\rho$ and 
 the function  $u^{R}:  y  \in  [\bar y_1,R]\times\R \mapsto w^R(y)-q_\delta \bar y_1- C-L$  is a subsolution of
\begin{equation}
\label{eq:proof_lem_control_slopes_bounded_value_pb_sub}
\left\lbrace
\begin{array}{ll}
    \tilde{H}^R((0,z_2),Du(y)+p_2e_2,y_2)  \le \lambda_\rho(z_2,p_2)-\delta & y\in (\bar y_1,\rho)\times \R,\\
      \tilde{H}^{-,R}((0,z_2),Du(y)+p_2e_2,y_2)  \le \lambda_\rho(z_2,p_2)-\delta, & y\in \{\rho\}\times \R,\\
      u(y)  \le -L, & y\in \{\bar y_1\}\times \R,\\
      u \hbox{ is 1-periodic w.r.t. } y_2.
\end{array}
\right.
\end{equation}
Finally, since $\chi_\rho(z_2,p_2,y)-\chi_\rho (z_2,p_2,\bar y)$ is a supersolution of \eqref{eq:proof_lem_control_slopes_bounded_value_pb_super} 
and $u^R$ is a subsolution of \eqref{eq:proof_lem_control_slopes_bounded_value_pb_sub},
 the comparison principle in Lemma~\ref{sec:furth-prop-corr} yields:  for all $y\in [\bar y_1,\rho]\times \R$
\begin{equation}
\label{eq:proof_lem_control_slopes_bounded_value_pb_end}
\begin{array}{lll}
\chi_\rho(z_2,p_2,y)-\chi_\rho (z_2,p_2,\bar y) \geq u^{R}(y) & = & q_\delta((y_1-\bar y_1)+g(y_2))- C-L\\
& \geq & (\Pi^R(z_2,p_2)-m(\delta)) ( y_1-\bar y_1)-M^*,
\end{array}
\end{equation}
where $M^*$ is a constant depending only of $z_2$ and $p_2$. 
Note that the constants which appear in \eqref{eq:proof_lem_control_slopes_bounded_value_pb_end} are independent of $\rho>0$. 
\end{proof}
The following corollary deals with the global corrector $\chi$:
\begin{corollary}
\label{cor:slopes_omega}
With  $\Pi^i(z_2,p_2)$  defined in \eqref{eq:special_notation_for_Pi}, $i=L,R$,
\begin{enumerate}
\item If $E(z_2, p_2)>E_0^R(z_2,p_2)$, then, with   $\rho^*>0$ and $M^*\in \R$ as in  the first point of  Proposition \ref{SLOPESLemma1},
for all $y\in [\rho^*,+\infty)\times \R$, $h_1\ge 0$ and $h_2\in \R$,
\begin{equation}
\label{slope32}
\chi(z_2, p_2, y+h_1e_1+h_2 e_2)-\chi(z_2, p_2, y)\geq  \Pi^R(z_2,p_2) h_1-M^*.
\end{equation}
\item  If $E(z_2, p_2)>E_0^L(z_2,p_2)$, then, with   $\rho^*>0$ and $M^*\in \R$ as in  the second point of  Proposition \ref{SLOPESLemma1},
for all  $y\in (-\infty,-\rho^*]\times \R$, $h_1\ge 0$ and $h_2\in \R$,
\begin{equation}
\label{slope3_bis2}
\chi(z_2, p_2, y-h_1e_1+h_2 e_2)-\chi(z_2, p_2, y)\geq - \Pi^L(z_2,p_2) h_1-M^*.
\end{equation}
\end{enumerate}
\end{corollary}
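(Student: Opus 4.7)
The plan is to deduce the slope estimate for the global corrector $\chi$ directly from the corresponding estimate for the truncated correctors $\chi_\rho$ given by Proposition~\ref{SLOPESLemma1}, by a two-step limiting procedure: first $\rho\to\infty$ to pass from $\chi_\rho$ to $\chi$, and then $\delta\to 0^+$ to remove the extra defect $m(\delta)$.

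Concretely, for the first assertion, I fix $z_2,p_2\in\R$ with $E(z_2,p_2)>E^R_0(z_2,p_2)$ and take $\rho^*=\rho^*(z_2,p_2)$, $\delta^*=\delta^*(z_2,p_2)$, $m(\cdot)$ and $M^*$ as in Proposition~\ref{SLOPESLemma1}, stressing that all of them are independent of $\rho$. Recall from the proof of Theorem~\ref{lem:existence_cellpE} that, up to the substitution $\chi_\rho\mapsto\chi_\rho-\chi_\rho(z_2,p_2,(0,0))$ (which does not affect the differences we are estimating), the global corrector $\chi(z_2,p_2,\cdot)$ is obtained as the locally uniform limit, along some subsequence $\rho_n\to\infty$, of the truncated correctors $\chi_{\rho_n}(z_2,p_2,\cdot)$.

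Now fix any $y=(y_1,y_2)$ with $y_1\ge\rho^*$, any $h_1\ge 0$ and any $h_2\in\R$, and fix $\delta\in(0,\delta^*]$. For every $\rho_n$ with $\rho_n\ge y_1+h_1$, the pair $(y,h_1)$ satisfies the hypotheses of Proposition~\ref{SLOPESLemma1}, so
\begin{equation*}
\chi_{\rho_n}(z_2,p_2,y+h_1e_1+h_2e_2)-\chi_{\rho_n}(z_2,p_2,y)\ge (\Pi^R(z_2,p_2)-m(\delta))h_1-M^*.
\end{equation*}
Sending $n\to\infty$ and using local uniform convergence $\chi_{\rho_n}\to\chi$, I obtain
\begin{equation*}
\chi(z_2,p_2,y+h_1e_1+h_2e_2)-\chi(z_2,p_2,y)\ge (\Pi^R(z_2,p_2)-m(\delta))h_1-M^*,
\end{equation*}
which is uniform in $\delta\in(0,\delta^*]$. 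Finally I let $\delta\to 0^+$: since $m(\delta)\to 0$ and the constants $\rho^*$ and $M^*$ do not depend on $\delta$, I get \eqref{slope32}. The second assertion \eqref{slope3_bis2} is proved in exactly the same way, using the symmetric statement in Proposition~\ref{SLOPESLemma1} (and Remark~\ref{rmk:about_mixed_boundary_value_problem}).

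The argument is essentially routine: the real work has been done in Proposition~\ref{SLOPESLemma1}, whose proof carefully produces constants $\rho^*$, $M^*$ and a modulus $m(\cdot)$ \emph{independent of }$\rho$. The only point requiring attention is to make sure one keeps $\delta$ fixed during the limit $\rho\to\infty$ and takes $\delta\to 0$ only afterwards; otherwise there would be nothing hard to overcome.
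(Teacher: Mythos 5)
Your proof is correct and follows exactly the approach of the paper, which simply states that the corollary ``follows easily from Proposition~\ref{SLOPESLemma1} and the local uniform convergence of the sequence $\chi_\rho(z_2, p_2,\cdot)$ toward $\chi(z_2, p_2,\cdot)$''; you have merely spelled out the two-step limit ($\rho\to\infty$, then $\delta\to 0^+$) that the paper leaves implicit.
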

\begin{proof}
The proof follows easily from Proposition \ref{SLOPESLemma1} and the local uniform 
convergence of the sequence $\chi_\rho(z_2, p_2,\cdot)$ toward $\chi(z_2, p_2,\cdot)$.
\end{proof}

\begin{corollary}
\label{cor:control_slopes_W}
For $z_2,p_2\in \R$,  $y\mapsto W(z_2,p_2,y)$ defined in Lemma~\ref{lem:rescaling_omega} satisfies 
\begin{eqnarray}
\label{cor:control_slopes_W1}
\overline{\Pi}^R(z_2,p_2)\le \partial_{y_1}W(z_2,p_2,y) \le \widehat{\Pi}^R(z_2,p_2) & &\hbox{ for a.a. } y\in (0,+\infty)\times \R,\\
\label{cor:control_slopes_W2}
 \widehat{\Pi}^L(z_2,p_2) \le \partial_{y_1}W(z_2,p_2,y) \le \overline{\Pi}^L(z_2,p_2) & &\hbox{ for a.a. }y\in (-\infty,0)\times\R,
\end{eqnarray}
and for all $y$:
\begin{equation}
\label{eq:control_slopes_W_summary}
\overline{\Pi}^L(z_2,p_2) y_1 1_{\Omega^L}+\overline{\Pi}^R(z_2,p_2) y_1 1_{\Omega^R}\le W(z_2,p_2,y) \le  \widehat{\Pi}^L(z_2,p_2) y_1 1_{\Omega^L}+ \widehat{\Pi}^R(z_2,p_2) y_1 1_{\Omega^R}.
\end{equation}
\end{corollary}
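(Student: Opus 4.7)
By Lemma~\ref{lem:rescaling_omega}, $y\mapsto W(z_2,p_2,y)$ is Lipschitz with constant $L$, independent of $y_2$, vanishes at the origin, and solves~\eqref{W} in the viscosity sense. Writing $W(z_2,p_2,y) = w(y_1)$, Rademacher's theorem ensures $w$ is differentiable almost everywhere, and at any point of differentiability the viscosity sub- and super-solution inequalities collapse to the classical equation. Hence
\[
H^R\bigl((0,z_2),\,w'(y_1)e_1 + p_2 e_2\bigr) = E(z_2,p_2) \quad \text{a.e. on } (0,+\infty),
\]
and the analogous identity with $H^L$ holds a.e.\ on $(-\infty,0)$. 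Once~\eqref{cor:control_slopes_W1}--\eqref{cor:control_slopes_W2} are established, the sandwich~\eqref{eq:control_slopes_W_summary} follows immediately by integrating from $0$ to $y_1$ and using $w(0)=0$; so the real work is the two derivative bounds.

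I would prove~\eqref{cor:control_slopes_W1} by splitting on the sign of $E(z_2,p_2) - E_0^R(z_2,p_2)$. In the equality case, the convex coercive function $q\mapsto H^R((0,z_2),qe_1+p_2e_2)$ has level set at height $E_0^R$ equal to its minimizer interval; by Lemma~\ref{sec:effect-hamilt-gamma-1}(3) and the definitions~\eqref{eq:9}--\eqref{eq:10}, this interval is precisely $[\overline{\Pi}^R,\widehat{\Pi}^R]$, so the a.e.\ PDE alone gives both bounds. When $E>E_0^R$, Remark~\ref{rmk:equality_bar_Pi_and_hat_Pi} collapses $\overline{\Pi}^R = \widehat{\Pi}^R =: \Pi^R$, and I would rescale the first inequality of Corollary~\ref{cor:slopes_omega}: setting $W_\epsilon(z_2,p_2,y) = \epsilon\,\chi(z_2,p_2,y/\epsilon)$, once $\epsilon$ is small enough that $y_1/\epsilon\ge \rho^*$, inequality~\eqref{slope32} rescales to
\[
W_\epsilon(z_2,p_2,y+h_1 e_1) - W_\epsilon(z_2,p_2,y) \;\ge\; \Pi^R h_1 - \epsilon M^*,
\]
and passing to the limit along the subsequence of Lemma~\ref{lem:rescaling_omega} yields $w(y_1+h_1) - w(y_1) \ge \Pi^R h_1$ for all $y_1,h_1>0$, hence $w' \ge \Pi^R$ a.e. Since $H^R$ is convex and strictly monotone past its minimizer set, the level set at height $E>E_0^R$ consists of two points whose larger is exactly $\Pi^R$; the a.e.\ PDE confines $w'$ to this pair, and the lower bound $w' \ge \Pi^R$ forces $w' = \Pi^R$ a.e.

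The mirror bounds~\eqref{cor:control_slopes_W2} on $(-\infty,0)$ follow from the same scheme using the second assertion of Corollary~\ref{cor:slopes_omega} and Lemma~\ref{sec:effect-hamilt-gamma-1}(2). The only sign bookkeeping is that the leftward growth estimate $\chi(y-h_1 e_1)-\chi(y)\ge -\Pi^L h_1 - M^*$, once rewritten as $\chi(y'+h_1 e_1)-\chi(y') \le \Pi^L h_1 + M^*$ and rescaled, becomes an a.e.\ upper bound $w'\le \overline{\Pi}^L$ on $(-\infty,0)$, which combined with the PDE in the case $E > E_0^L$ (where $\widehat{\Pi}^L = \overline{\Pi}^L$) forces equality.

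The step I expect to require the most care is the rescaling/passage to the limit: the conclusion must be valid on the whole of $(0,+\infty)$ rather than only on some $[\delta,+\infty)$, which forces $\epsilon$ to be taken small along the subsequence $\epsilon_n$ of Lemma~\ref{lem:rescaling_omega} only after $y_1$ has been fixed. Once this care is taken, the growth estimate for $w$ combines cleanly with the level-set structure provided by Lemma~\ref{sec:effect-hamilt-gamma-1} and Remark~\ref{rmk:equality_bar_Pi_and_hat_Pi} to pin down $w'$ a.e.
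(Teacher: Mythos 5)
Your proposal is correct and follows essentially the same route as the paper's own proof: you use the a.e.\ form of \eqref{W} together with the convexity and coercivity of $H^i$, split on whether $E=E_0^i$ or $E>E_0^i$, invoke Remark~\ref{rmk:equality_bar_Pi_and_hat_Pi} in the strict case, and obtain the one-sided derivative bound by rescaling the growth estimate in Corollary~\ref{cor:slopes_omega} and passing to the limit along the subsequence of Lemma~\ref{lem:rescaling_omega}, exactly as the paper does. You merely make two steps more explicit than the paper — the rescaling $W_\epsilon(y+h_1e_1)-W_\epsilon(y)\ge \Pi^R h_1-\epsilon M^*$ and the observation that one must fix $y_1>0$ before sending $\epsilon\to 0$ — but these are the same argument the paper compresses into ``from Corollary~\ref{cor:slopes_omega} and the local uniform convergence of $W_\epsilon$ toward $W$.''
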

\begin{proof}
 From Lemma~\ref{lem:rescaling_omega}, we see that $y\mapsto W(z_2,p_2,y)$ is Lipschitz continuous w.r.t. $y_1$ and independent of $y_2$, and satisfies 
 \begin{equation}
   \label{eq:11}
H^R((0,z_2),\partial_{y_1} W(z_2,p_2,y) e_1+p_2e_2)=E(z_2, p_2) \quad \hbox{  a.e. in } \Omega^R. 
 \end{equation}
Consider first the case when $E(z_2,p_2)>E_0^R(z_2,p_2)$;
from the convexity and coercivity of $H^R$, the observations above yield that almost everywhere in $y$, $\partial_{y_1}W(z_2,p_2,y)$ can be either $\Pi^R(z_2,p_2)$
(the unique real number such that $H^{-,R}((0,z_2), qe_1+p_2e_2)=E(z_2, p_2)$), or the unique real number  $q$ (depending on $(z_2,p_2)$) such that $ H^{+,R}((0,z_2), qe_1+p_2e_2)=E(z_2, p_2)$.  Note that $q<  \Pi^R(z_2,p_2)$.
But from Corollary \ref{cor:slopes_omega} and the local uniform convergence of $W_\epsilon(z_2,p_2,\cdot)$ toward $W(z_2,p_2,\cdot)$, 
we see that  that for any $y_1>0$ and $h_1\ge 0$,
\begin{displaymath}
W(z_2,p_2,y+h_1 e_1)-W(z_2,p_2,y)\ge \Pi^R(z_2,p_2) h_1,
\end{displaymath}
which implies that almost everywhere, $\partial_{y_1}W(z_2,p_2,y) \ge  \Pi^R(z_2,p_2)>q$. Therefore,  
 $\partial_{y_1}W(z_2,p_2,\cdot)= \Pi^R(z_2,p_2)$ almost everywhere.
\\
In the case when  $E(z_2,p_2)=E_0^R(z_2,p_2)$, we deduce from (\ref{eq:11}) that
 almost everywhere in $y$,   $\overline{\Pi}^R(z_2,p_2)\le \partial_{y_1}W(z_2,p_2,y) \le \widehat{\Pi}^R(z_2,p_2)$. 
\\
We have proved (\ref{cor:control_slopes_W1}). The proof of (\ref{cor:control_slopes_W2}) is identical. Finally, (\ref{eq:control_slopes_W_summary}) comes from (\ref{cor:control_slopes_W1}),~(\ref{cor:control_slopes_W2})  and from the fact that $W(z_2,p_2,0)=0$.
\end{proof}

 
\section{A comparison principle for (\ref{def:HJeffective_short})}
\label{sec:comp-princ-refd}
To prove the main result of the paper, i.e. Theorem \ref{th:convergence_result}, we  need a comparison principle for (\ref{def:HJeffective1})-(\ref{def:HJeffective2}).
Before proving such a result, we need to establish some useful properties of $E$ arising in (\ref{def:HJeffective2}).
 \subsection{Properties of $E(\cdot,\cdot)$}
 \label{sec:some_properties_on_E}
 In the  theory of homogenization of Hamilton-Jacobi equations, it is quite standard to observe that the effective Hamiltonian inherits some properties from the original problem, see the pioneering work \cite{LPV}. 
\begin{lemma}
\label{lem:regularity_of_ergodic_hamiltonian}
For any $z_2\in \R$, the function $p_2\mapsto E(z_2,p_2)$ is convex.
 For any $z_2,z_2',p_2, p_2'\in \R$,
 \begin{eqnarray}
   \label{lem:E_lipschitz_wrt_p_2}
\mid E(z_2,p_2)-E(z_2,p_2') \mid \le M_f |p_2-p_2'|,
\\
\label{lem:E_lipschitz_wrt_z_2}
\mid E(z_2,p_2)-E(z_2',p_2) \mid \le C(1+|p_2|) |z_2-z_2'|+\omega(|z_2-z_2'|),
\\
 \label{lem:E_coercive}
 \delta_0|p_2|-M_\ell\le E(z_2,p_2)\le M_f |p_2|+M_\ell,
\end{eqnarray}
where the constants $M_f$, $M_\ell$, $\delta_0$ have been introduced in Assumptions $[\rm{H}0]$-$[\rm{H}3]$, 
the modulus of continuity $\omega$ has been introduced in Lemma \ref{sec:fast-slow-variables-2} and $C$ is a positive constant.
\\
Moreover, $p_2 \mapsto E(z_2,p_2)$ is affine in a neighborhood of $\pm \infty$. More precisely,
for any $z_2\in \R$, there exist $\hat \ell(z_2), \check \ell (z_2)\in [-M_\ell,M_\ell]$, $\hat f (z_2), \check f (z_2)\in [\delta_0,M_f]$ and $\hat K(z_2), \check K(z_2)>0$ such 
\begin{eqnarray}
  \label{eq:E_linear_for_p_big_enough}
p_2\ge \hat K(z_2) &\Rightarrow & E(z_2,p_2)= \hat f (z_2)p_2 +\hat \ell(z_2),\\ 
\label{eq:15}
p_2\le -\check K(z_2) &\Rightarrow & E(z_2,p_2)= -\check f(z_2)p_2 +\check \ell(z_2).
\end{eqnarray}
\end{lemma}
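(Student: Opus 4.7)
My plan is to derive each estimate first at the level of the truncated ergodic constants $\lambda_\rho(z_2,p_2)$ from Lemma~\ref{lem:existence_solution_truncated_cell_pb}, with constants uniform in $\rho$, and then pass to the limit via $E(z_2,p_2)=\lim_{\rho\to\infty}\lambda_\rho(z_2,p_2)$. The uniform-in-$\rho$ estimates are read off the optimal control representation $\lambda_\rho(z_2,p_2)=\lim_{\eta\to 0}\bigl(-\eta v^\eta_\rho(z_2,p_2,\cdot)\bigr)$ obtained in the proof of Lemma~\ref{lem:existence_solution_truncated_cell_pb}, together with the crucial observation that the running cost~(\ref{def:ell_pb_oc_truncated}), $\ell_{z_2,p_2}(y,a)=f^i_2((0,z_2),a)p_2+\ell^i((0,z_2),a)$, is \emph{affine in $p_2$} with slope bounded by $M_f$. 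Consequently $v^\eta_\rho(z_2,\cdot,y)$ is concave and $(M_f/\eta)$-Lipschitz as an infimum of affine $(M_f/\eta)$-Lipschitz functions of $p_2$; multiplying by $-\eta>0$ reverses concavity while preserving the Lipschitz constant $M_f$, and both properties survive the passages to the limits $\eta\to 0$ and $\rho\to+\infty$, yielding convexity of $p_2\mapsto E(z_2,p_2)$ and~(\ref{lem:E_lipschitz_wrt_p_2}).

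\textbf{Lipschitz in $z_2$ and coercivity.} Estimate~(\ref{lem:E_lipschitz_wrt_z_2}) follows the same pattern, combining the $L_f$-Lipschitz continuity of $z_2\mapsto f^i((0,z_2),a)$ from [H0], the modulus of continuity $\omega_\ell$ of $z_2\mapsto\ell^i((0,z_2),a)$ from [H1], and [H3] to transport an admissible trajectory for $f_{z_2}$ into one for $f_{z_2'}$ with an $O(|z_2-z_2'|)$ defect; this gives $|\eta v^\eta_\rho(z_2,p_2,\cdot)-\eta v^\eta_\rho(z_2',p_2,\cdot)|\le C(1+|p_2|)|z_2-z_2'|+\omega(|z_2-z_2'|)$ uniformly in $(\eta,\rho)$. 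The upper bound in~(\ref{lem:E_coercive}) is immediate from $|\ell_{z_2,p_2}|\le M_f|p_2|+M_\ell$. For the lower bound, [H3] lets me pick $a^\star\in A^L$ with $f^L((0,z_2),a^\star)=-\mathrm{sign}(p_2)\delta_0 e_2$; starting the trajectory at $y(0)=(-\rho/2,0)\in\Omega^L$ and using $a^\star$ forever, the dynamics $y_1'=\mathrm{sign}(p_2)\delta_0 g'(y_2)$, $y_2'=-\mathrm{sign}(p_2)\delta_0$ yield $|y_1(t)-y_1(0)|\le 2\|g\|_\infty$, so for $\rho\ge 4\|g\|_\infty$ the trajectory stays in $\Omega^L\cap([-\rho,\rho]\times\R)$, with running cost $-\delta_0|p_2|+\ell^L((0,z_2),a^\star)\le-\delta_0|p_2|+M_\ell$. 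Hence $v^\eta_\rho(z_2,p_2,(-\rho/2,0))\le(-\delta_0|p_2|+M_\ell)/\eta$, $\lambda_\rho\ge\delta_0|p_2|-M_\ell$, and letting $\rho\to\infty$ gives the bound on $E$.

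\textbf{The main obstacle: affinity at $\pm\infty$.} The hard part is upgrading the above Lipschitz/coercivity information into the exact affine identities~(\ref{eq:E_linear_for_p_big_enough})-(\ref{eq:15}) past a threshold $\hat K(z_2)$. The heuristic is that for $|p_2|$ large the term $p_2 f^i_2$ completely dominates $\ell^i$ in the running cost, so the optimal control locks onto a single ``extremal tangential'' dynamics and $E$ becomes genuinely affine in $p_2$ rather than merely Lipschitz. Concretely, my plan is twofold. First, by a standard linear-programming duality argument based on [H2] and [H3], establish the representation
\[
E_0^i(z_2,p_2)=\sup_{\mu\in\cK^i(z_2)}\int_{A^i}\bigl(-f^i_2((0,z_2),a)p_2-\ell^i((0,z_2),a)\bigr)d\mu(a),
\]
where $\cK^i(z_2)$ is the compact convex nonempty set (by [H3]) of probability measures $\mu$ on $A^i$ satisfying $\int f^i_1((0,z_2),\cdot)d\mu=0$. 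The right-hand side is the support function of a fixed compact convex subset of $\R^2$ evaluated at $(p_2,1)$; as $p_2\to+\infty$ the maximizer stabilizes at the (lexicographic) $\mu^\star$ maximizing first $-\int f^i_2\,d\mu$ and then $-\int\ell^i\,d\mu$, so $E_0^i(z_2,\cdot)$ becomes exactly affine past a threshold with slope $\hat f^i(z_2)=-\int f^i_2\,d\mu^\star\in[\delta_0,M_f]$. Second, and this is the step I expect to be the most delicate, show that for such extreme $p_2$ the inequality $E\ge\max(E_0^L,E_0^R)$ of Proposition~\ref{cor:E_bigger_than_E_0} is in fact an equality, with the affine slope $\hat f(z_2)=\max_i\hat f^i(z_2)$. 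This requires building an explicit affine supersolution of the global cell-problem~(\ref{cellpE}) of the form $\chi(y)=\alpha(y_1+g(y_2))+\beta y_2$ (which is automatically $1$-periodic in $y_2$ and whose effective Hamiltonian is $y_2$-independent, as one checks by direct computation using~(\ref{explicitelyfast})), tuning $\alpha=\alpha(z_2,p_2)$ and $\beta=\beta(z_2,p_2)$ from the minimizer $\mu^\star$ above, and verifying compatibility with the junction condition on $\Gamma$ and with the growth conditions built into~(\ref{trunc-cellp}) uniformly in $\rho$. The interplay between $g$, $A^L$ and $A^R$ and the tangency condition in $\cK^i(z_2)$ enter precisely at this last verification.
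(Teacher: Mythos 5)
Your treatment of the easy items (convexity, $M_f$-Lipschitz in $p_2$, the two-sided coercivity bound) is correct and essentially what one would do; the paper does not spell these out and goes straight to \eqref{lem:E_lipschitz_wrt_z_2} and the affine tails. For \eqref{lem:E_lipschitz_wrt_z_2} you propose a trajectory-transport argument (perturb an admissible trajectory for $f_{z_2}$ into one for $f_{z_2'}$ and estimate the defect), whereas the paper proceeds by a PDE comparison argument: it shows, after delicately modifying the test-function to keep its gradient controlled by $L(p_2)=L_1+L_2|p_2|$, that $v^\eta_\rho(z_2',p_2,\cdot)+\tfrac{C}{\eta}(1+|p_2|)|z_2-z_2'|+\tfrac{1}{\eta}\omega(|z_2-z_2'|)$ is a supersolution of the equation satisfied by $v^\eta_\rho(z_2,p_2,\cdot)$, then invokes the comparison principle of Lemma \ref{th:comparison_truncated_cell_pb} and lets $\eta\to0$, $\rho\to\infty$. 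These are genuinely different routes; the PDE route sidesteps the usual headache of keeping the perturbed trajectory admissible for the state-constrained problem in $[-\rho,\rho]\times\R$ (and compatible with the discontinuity of the dynamics on $\Gamma$), which your sketch glosses over, but your route is also plausible with care.

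Where your proposal genuinely diverges and has a gap is the affine behavior as $p_2\to\pm\infty$. You propose to (i) show that $E_0^i(z_2,\cdot)$ is eventually affine via an LP/occupational-measure duality, and (ii) show that $E(z_2,p_2)=\max(E_0^L,E_0^R)(z_2,p_2)$ for $|p_2|$ large by building an affine supersolution of the global cell problem. Step (ii) is a substantive and unproved claim: Proposition~\ref{cor:E_bigger_than_E_0} only gives $E\ge\max(E_0^L,E_0^R)$, the reverse inequality for extreme $p_2$ is not obvious at all (the whole point of $E$ is that trajectories using both sides of the interface may do strictly better), and a supersolution built from a globally affine $\chi$ is unbounded, so it cannot be fed directly into the comparison machinery of Lemma~\ref{th:comparison_truncated_cell_pb}/\ref{lem:existence_solution_truncated_cell_pb} to bound $\lambda_\rho$ from above. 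Step (i) also does not obviously give eventual affinity for a general compact $A^i$, for the same reason. The paper's proof avoids all of this: it introduces the quantity
$\bar f_\rho^\eta(z_2,y)=\sup_{(\gamma_y,a)\in\cT_{z_2,y,\rho}}\bigl\{-\int_0^\infty f_2((0,z_2),a(t))e^{-\eta t}\,dt\bigr\}$,
which is exactly the value of the same truncated ergodic control problem with running cost $-f_2$ (and no $p_2$ dependence). From the affine form of $\ell_{z_2,p_2}$ and $|\ell^i|\le M_\ell$ one gets, for $p_2>0$, the two-sided bound $\eta\bar f_\rho^\eta p_2-M_\ell\le-\eta v^\eta_\rho\le\eta\bar f_\rho^\eta p_2+M_\ell$; letting $\eta\to0$ and $\rho\to\infty$ gives $\hat f(z_2)p_2-M_\ell\le E(z_2,p_2)\le\hat f(z_2)p_2+M_\ell$ with a single slope $\hat f(z_2)$ independent of $p_2$, and the affine tail is then deduced from this sandwich together with convexity of $p_2\mapsto E(z_2,p_2)$. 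So the correct object to compare $E$ with at infinity is the ergodic constant for the running cost $-f_2$, not $\max(E_0^L,E_0^R)$; you should replace your step (ii) by this direct argument.
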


\begin{proof}
The proof contains arguments that are  quite similar to those contained in \cite{LPV}, but technical difficulties arise from the discontinuities of the Hamiltonians at $y_1=0$.
The main idea is to deduce the desired properties from those of $-\eta v_\rho^\eta$, where $v_\rho^\eta$ is defined in~(\ref{valueR}).
For brevity, we only  prove    (\ref{lem:E_lipschitz_wrt_z_2}) and that   $p_2 \mapsto E(z_2,p_2)$ is affine in a neighborhood of $\pm \infty$.

\paragraph{Proof of (\ref{lem:E_lipschitz_wrt_z_2})}
For $z_2,z_2',p_2\in \R$, consider $y\mapsto v^\eta_\rho(z_2, p_2,y)$ and $y\mapsto v^\eta_\rho(z_2', p_2,y)$ given by \eqref{valueR}. 
These functions are  viscosity solutions of \eqref{trunc-cellpapprox}   with $\lambda_\eta=0$.  
Assume that 
$0 = v^\eta_\rho(z_2', p_2,\bar y) -\varphi(\bar y)$ be a local minimum of $v^\eta_\rho(z_2', p_2,\cdot) -\varphi(\cdot)$ for  $\bar y\in \R^2$ and $\varphi \in \cR_\rho$.
As above, we focus on the case when $\bar y \in \Gamma$ because the other cases are simpler.
It is not restrictive to assume that $\varphi^L$   and   $\varphi^R$  are smooth (at least $\cC^3$).
Since $y\mapsto v^\eta_\rho(z_2', p_2,y)$ is Lipschitz continuous with a constant $ L(p_2)=  L_1 + L_2 |p_2|$, see (\ref{eq:v_R^rho_L_lipschitz}), we see that 
\begin{equation}
 \label{proof:point_(ii)_regularity_E_1}
   |\partial_{y_2} \varphi^L(\bar y)|= |\partial_{y_2} \varphi^R(\bar y)| \le L(p_2),\quad 
  \partial_{y_1} \varphi^L(\bar y)\ge -L(p_2),\quad 
  \partial_{y_1} \varphi^R(\bar y)\le L(p_2).
\end{equation}
It is always possible to modify $\varphi$ and obtain a test-function $\psi$ such that $|\partial_{y_1} \psi^i(\bar y)|\le 2L(p_2)$, 
$|\partial_{y_2} \psi^i(\bar y)|\le L(p_2)$,
$i=L,R$ and that
$v^\eta_\rho(z_2', p_2,\cdot)-\psi(\cdot)$  has a local minimum at $\bar y$. Indeed, we make out two cases:
\begin{enumerate}
\item if $|\partial_{y_1} \varphi^i(\bar y)|\le 2L(p_2)$ for $i=L,R$,  then we choose $\psi=\varphi$.

\item If $\partial_{y_1} \varphi^L(\bar y)>2L(p_2)$ or $\partial_{y_1} \varphi^R(\bar y)<-2L(p_2)$, let us introduce  
\begin{displaymath}
\!\!\!\!\!\!\!\!\!\!\!\!\!\!\!\!\!\!\!\!\!\!\!\!\!\!\!\!\!\!   \psi(y) =\left\lbrace
\begin{array}{ll}
 \varphi(y)-(2L(p_2)+\partial_{y_1}\varphi^R(\bar y))y_1-A  |y-\bar y|^2  & \mbox{ if } y\in [0,\rho]\times \R \mbox{ and } \partial_{y_1} \varphi^R(\bar y)<-2L(p_2),\\
  \varphi(y)-A |y-\bar y|^2  & \mbox{ if } y\in [0,\rho]\times \R \mbox{ and } |\partial_{y_1} \varphi^R(\bar y)|\le 2L(p_2),\\
\varphi(y)+(2L(p_2)-\partial_{y_1}\varphi^L(\bar y))y_1-A|y-\bar y|^2 &  \mbox{ if } y\in [-\rho,0]\times \R  \mbox{ and } \partial_{y_1} \varphi^L(\bar y)>2L(p_2),\\
\varphi(y)-A |y-\bar y|^2 &  \mbox{ if } y\in [-\rho,0]\times \R  \mbox{ and } |\partial_{y_1} \varphi^L(\bar y)|\le 2L(p_2).
\end{array}
\right.
\end{displaymath}
Note that $|\partial_{y_1} \psi^i(\bar y)|\le 2L(p_2)$, $i=L,R$  and that 
\begin{equation}
  \label{eq:14}
\partial_{y_1} \varphi^L(\bar y)\ge \partial_{y_1} \psi^L(\bar y),\quad\hbox{and}\quad
\partial_{y_1} \varphi^R(\bar y)\le \partial_{y_1} \psi^R(\bar y).
\end{equation}
We claim that for $A$ large enough,  $v^\eta_\rho(z_2', p_2,\cdot)-\psi(\cdot)$  has a local minimum at $\bar y$. Indeed, fix $r>0$ such that 
$v^\eta_\rho(z_2', p_2,y)-\varphi(y)\ge v^\eta_\rho(z_2', p_2,\bar y)-\varphi(\bar y)=0$  for  $y\in B(\bar y, r)$.
Assuming for example that $\partial_{y_1} \varphi^R(\bar y)<-2L(p_2)$ (the case when $ |\partial_{y_1}\varphi^R(\bar y)|\le 2L(p_2)$ is obvious),
 we see that  for $y\in B(\bar y, r)$ with $y_1>0$,
 \begin{equation}
   \label{eq:16}
   \begin{split}
v^\eta_\rho(z_2', p_2,y)-\psi(y)  \ge& v^\eta_\rho(z_2', p_2,y)- v^\eta_\rho(z_2', p_2,(0,y_2))+  \varphi(0,y_2)-\varphi(y)\\ &+ \left(2L(p_2)+\partial_{y_1}\varphi^R(\bar y)\right)y_1+A|y-\bar y|^2.     
   \end{split}
 \end{equation}
For a constant $c>0$ depending on $\phi$ and $r$, 
\begin{eqnarray*}
\varphi(0,y_2)-\varphi(y) &\ge&  -y_1 \partial_{y_1} \varphi^R(0,y_2) -  c y_1^2  \ge  - y_1\partial_{y_1} \varphi^R (\bar y) -  c ( y_1^2 +y_1 |y_2-\bar y_2|).
\end{eqnarray*}
On the other hand, $ v^\eta_\rho(z_2', p_2,y)- v^\eta_\rho(z_2', p_2,(0,y_2))\ge  -L(p_2)y_1$. From the latter  two observations and   (\ref{eq:16}),
we deduce that $v^\eta_\rho(z_2', p_2,y)-\psi(y) \ge  L(p_2)y_1 - c ( y_1^2 +y_1 |y_2-\bar y_2|) + A |y-\bar y|^2$. Therefore for $A$ large enough, 
\begin{displaymath}
v^\eta_\rho(z_2', p_2,y)-\psi(y) \ge  0 =v^\eta_\rho(z_2', p_2,\bar y)-\psi(\bar y),\quad y\in   B(\bar y, r),
\end{displaymath}
and the claim is proved.
\end{enumerate}
In both cases, we see  that $ \eta v^\eta_\rho(z_2', p_2,\bar y)+   \max_{i\in \{L,R\}}\left(\tilde H^{+,i}((0,z_2'),D\psi^i(\bar y)+p_2e_2,\bar y_2)\right) \ge 0$; 
assuming  that the
latter maximum is achieved by $i=R$ for example, this yields
\begin{displaymath}
\begin{array}{l}
 \eta\left( v^\eta_\rho(z_2', p_2,\bar y)+\frac{C}{\eta}(1+|p_2|) |z_2-z_2'|+\frac{1}{\eta}\omega(|z_2-z_2'|)\right)+   
\tilde H^{+,R}((0,z_2),D\psi^R(\bar y)+p_2e_2,\bar y_2) \\
\ge C(1+|p_2|) |z_2-z_2'|-M|p_2e_2+D\psi^R(\bar y)||z_2-z_2'|.
 \end{array}
\end{displaymath}
Thanks to \eqref{proof:point_(ii)_regularity_E_1} and from the construction of $\psi$,
  \begin{equation}
  \label{proof:point_(ii)_regularity_E_3}
 |p_2e_2+D\psi^i(\bar y)|\le |p_2|+3L(p_2), \quad i=L,R.
 \end{equation}
 Hence, choosing $C\ge M \max( 3 L_1, 1+ 3L_2)$ yields 
\begin{displaymath}
\begin{array}{l}
 \eta\left( v^\eta_\rho(z_2', p_2,\bar y)+\frac{C}{\eta}(1+|p_2|) |z_2-z_2'|+\frac{1}{\eta}\omega(|z_2-z_2'|)\right)+  
\tilde H^{+,R}((0,z_2),D\psi^R(\bar y)+p_2e_2,\bar y_2) 
 \ge 0 .
 \end{array}
\end{displaymath}
But from (\ref{eq:14}) and the nonincreasing character of $\tilde H^{+,R}$, we see that 
\begin{displaymath}
\begin{array}{l}
 \eta\left( v^\eta_\rho(z_2', p_2,\bar y)+\frac{C}{\eta}(1+|p_2|) |z_2-z_2'|+\frac{1}{\eta}\omega(|z_2-z_2'|)\right)+  
\tilde H^{+,R}((0,z_2),D\varphi^R(\bar y)+p_2e_2,\bar y_2) 
 \ge 0. 
 \end{array}
\end{displaymath}
Therefore, $v^\eta_\rho(z_2', p_2,\cdot)+\frac{C}{\eta}(1+|p_2|) |z_2-z_2'|+\frac{1}{\eta}\omega(|z_2-z_2'|)$ is a supersolution of the equation satisfied by $v^\eta_\rho(z_2, p_2,\cdot)$ and we conclude using the comparison principle Lemma \ref{th:comparison_truncated_cell_pb}, passing to the limit as $\eta\to 0$ and $\rho\to +\infty$, and finally exchanging the roles of $z_2$ and $z_2'$.
\paragraph{Proof that $p_2 \mapsto E(z_2,p_2)$ is affine in a neighborhood of $\pm \infty$}
We focus on \eqref{eq:E_linear_for_p_big_enough} since the proof of  (\ref{eq:15}) is similar. For $z_2\in \R$, $y\in [-\rho,\rho]\times \R$ and $p_2,\eta,\rho>0$,
let us define  $\bar f_\rho^\eta(z_2,y)=\sup_{(\gamma_y,a)\in\cT_{z_2,y,\rho}} \left\lbrace-\int_0^\infty f_2((0, z_2),a(t))e^{-\eta t} dt\right\rbrace$, with 
$\cT_{z_2,y,\rho}$ given in (\ref{eq:2freezed}). From 
\eqref{def:ell_pb_oc_truncated} and \eqref{valueR}, we deduce that 
\begin{equation}
\label{eq:proof_E_linear_wrt_p_1}
\eta \bar f_\rho^\eta(z_2,y)  p_2 -M_\ell  \le  -\eta v^\eta_\rho(z_2, p_2,y) 
\le  \eta \bar f_\rho^\eta(z_2,y)  p_2 +M_\ell.
\end{equation}
From the assumptions, it is easy to check that 
\begin{eqnarray}
\label{eq:proof_E_linear_wrt_p_2}
|\eta  \bar f_\rho^\eta(z_2,y)| & \le &  M_f,\\
\label{eq:proof_E_linear_wrt_p_3}
| \bar f_\rho^\eta(z_2,y)- \bar f_\rho^\eta(z_2,y')| & \le &  C|y-y'|,
\end{eqnarray}
for some positive constant $C$, 
and that $y\mapsto \bar f_\rho^\eta(z_2,y)$ is periodic with period $1$ in the variable $y_2$. From Ascoli-Arzela theorem, up to the extraction of subsequences, we may assume that 
$\bar f_\rho^\eta(z_2,\cdot)- \bar f_\rho^\eta(z_2,0)$ tends to a Lipschitz function (with Lipschitz constant $C$) and that $\eta \bar f_\rho^\eta(z_2,\cdot)$ tends to a constant $\bar f_\rho (z_2)$  as $\eta\to 0$. With the same arguments as in \S~\ref{sec:passage-limit-as}, we may prove that $\bar f_\rho (z_2)$  is nondecreasing and  uniformly bounded with respect to $\rho$. Therefore, we may define $\hat f (z_2)=\lim_{\rho\to +\infty} \bar f_\rho (z_2)$.\\
Passing to the limit in (\ref{eq:proof_E_linear_wrt_p_1}) as $\eta\to 0$ then as  $\rho\to +\infty$, we deduce that
\begin{equation}
\label{eq:proof_E_linear_wrt_p_end}
\hat f  (z_2) p_2 -M_\ell  \le  E(z_2, p_2)   \le \hat f (z_2) p_2 +M_\ell.
\end{equation}
Finally, from (\ref{eq:proof_E_linear_wrt_p_end}) and the convexity of $p_2\mapsto E(z_2, p_2) $, we infer that 
there exists $\hat \ell(z_2)\in [-M_\ell,M_\ell]$ and $\hat K(z_2)>0$ such that for any $p_2\ge \hat K(z_2)$
\begin{displaymath}
 E(z_2, p_2) = \hat f (z_2) p_2 + \hat \ell (z_2).
\end{displaymath}
Finally, the bound $\hat f (z_2)\ge \delta_0$ comes from (\ref{lem:E_coercive}), and it is simple to check that $\hat f (z_2)\le M_f$.
\end{proof}

\subsection{The comparison principles}
\label{sec:comp-princ}
Since we are not able to control the constants $\hat K(z_2)$ and $\check K (z_2)$ arising in Lemma \ref{lem:regularity_of_ergodic_hamiltonian},
 we cannot directly use the comparison principle which is available in \cite[Theorem 2.5]{oudet2014}.   To apply the latter,
 it will be useful to first modify $E(z_2,p_2)$ for $|p_2|$ larger than some fixed  number $K$ independent of $z_2$. The following lemma deals with such modified Hamiltonians.
\begin{lemma}
\label{sec:proof-that-p_2}
  For a positive number $K$,  the Hamiltonian $E_K(z_2, p_2)$ defined by 
\begin{equation}
 \label{eq:def_E_K}
 E_K(z_2,p_2)=\left\lbrace
 \begin{array}{rcl}
 E(z_2,p_2) & \hbox{ if }& |p_2|\le K,\\
 E(z_2,K) +M_f(p_2-K) & \hbox{ if }& p_2> K,\\
  E(z_2,-K) -M_f(p_2+K) & \hbox{ if }& p_2<- K,
 \end{array}
 \right.
 \end{equation}
is convex in the variable $p_2$ and 
\begin{equation}
 \label{eq:representation_of_E_K}
 E_K(z_2,p_2)=\max_{b\in [-M_f,M_f]}\left(bp_2-E_K^\star(z_2,b)  \right),
 \end{equation}
where $E_K^\star: \R^2 \to \R \cup \{+\infty\}$ is the  Fenchel transform $ E_K^\star(z_2,b)=\sup_{q\in \R}\left(bq-E_K(z_2,q)\right)$.
 For  $z_2,z_2'\in \R$ and $b,b'\in [-M_f,M_f]$,
\begin{eqnarray}
\label{eq:E_K_star_bounded}
 |E_K^\star(z_2,b)|\le C_K,\\
 \label{prop:regularity_E_star_K_1}
 | E_K^\star(z_2,b)- E_K^\star(z_2,b')|\le K|b-b'|,  \\
 \label{prop:regularity_E_star_K_2}
 | E_K^\star(z_2,b)- E_K^\star(z_2',b)|\le \omega_K(t)= C(1+K)|z_2-z_2'|+\omega(|z_2-z_2'|),\end{eqnarray}
where in \eqref{eq:E_K_star_bounded}, $C_K$ is a positive constant, and,  in \eqref{prop:regularity_E_star_K_2}, the constant $C$ and the modulus of continuity $\omega$ are those appearing in (\ref{lem:E_lipschitz_wrt_z_2}).
\end{lemma}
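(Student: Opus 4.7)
The plan is to deduce everything from the convexity and $M_f$-Lipschitz property of $p_2\mapsto E(z_2,p_2)$ (Lemma on regularity of $E$) and from the explicit affine extension used to build $E_K$. First, I would establish convexity: on $[-K,K]$, $E_K(z_2,\cdot)=E(z_2,\cdot)$ is convex by (\ref{lem:E_lipschitz_wrt_p_2}) and the convexity stated in the previous lemma; the affine extensions with slopes $\pm M_f$ match continuously at $p_2=\pm K$ and have slopes that dominate the one-sided derivatives of $E(z_2,\cdot)$ at those points because $E(z_2,\cdot)$ is $M_f$-Lipschitz. Hence $E_K(z_2,\cdot)$ is convex, finite and continuous on $\R$, and coincides with an affine function of slope $M_f$ (resp.\ $-M_f$) for $p_2>K$ (resp.\ $p_2<-K$).

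Next I would analyse where the supremum in $E_K^\star(z_2,b)=\sup_{q\in\R}(bq-E_K(z_2,q))$ can lie. For $q>K$ and $b\le M_f$, the map $q\mapsto bq-E_K(z_2,q)=(b-M_f)q+M_fK-E(z_2,K)$ is nonincreasing, so its supremum for $q\ge K$ is attained at $q=K$; symmetrically, for $b\ge -M_f$ the supremum for $q\le -K$ is attained at $q=-K$. Consequently, whenever $b\in[-M_f,M_f]$,
\begin{equation*}
E_K^\star(z_2,b)=\sup_{q\in[-K,K]}\bigl(bq-E(z_2,q)\bigr),
\end{equation*}
a supremum taken over a compact set of a continuous function, hence a maximum. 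For $|b|>M_f$ the same calculation shows that the inner expression is unbounded in $q$, so $E_K^\star(z_2,b)=+\infty$. Since $E_K(z_2,\cdot)$ is convex, continuous and real-valued, the Fenchel--Moreau theorem gives $E_K(z_2,p_2)=\sup_{b\in\R}(bp_2-E_K^\star(z_2,b))$, and since the effective domain of $E_K^\star(z_2,\cdot)$ is contained in $[-M_f,M_f]$, the sup reduces to a max over this compact interval, proving (\ref{eq:representation_of_E_K}).

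Finally, the three estimates follow from the reduced formula $E_K^\star(z_2,b)=\sup_{q\in[-K,K]}(bq-E(z_2,q))$. For the bound (\ref{eq:E_K_star_bounded}), take $q=0$ in the supremum to get a lower bound $-E(z_2,0)\ge -M_\ell$ by (\ref{lem:E_coercive}), and bound the supremum above by $|b|K+\sup_{|q|\le K}|E(z_2,q)|\le M_fK+M_fK+M_\ell$, again using (\ref{lem:E_coercive}). For (\ref{prop:regularity_E_star_K_1}), let $q^*\in[-K,K]$ be a maximizer for $b$; then $E_K^\star(z_2,b)-E_K^\star(z_2,b')\le (b-b')q^*\le K|b-b'|$, and swapping roles gives the claim. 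For (\ref{prop:regularity_E_star_K_2}), the standard estimate
\begin{equation*}
|E_K^\star(z_2,b)-E_K^\star(z_2',b)|\le \sup_{q\in[-K,K]}|E(z_2,q)-E(z_2',q)|
\end{equation*}
combined with (\ref{lem:E_lipschitz_wrt_z_2}) immediately yields $C(1+K)|z_2-z_2'|+\omega(|z_2-z_2'|)$. The only conceptually delicate point, and the one I would double-check most carefully, is the reduction of the sup to $[-K,K]$ for $b\in[-M_f,M_f]$, since all three regularity estimates rely on this localization; everything else is routine convex-analytic bookkeeping.
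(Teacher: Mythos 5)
Your proof is correct and takes essentially the same route as the paper: convexity of $E_K$ from the $M_f$-Lipschitz bound on $E$, the observation that $E_K^\star(z_2,\cdot)$ has effective domain $[-M_f,M_f]$ with the supremum localizing to $[-K,K]$ there, and then the three estimates read off from this localized formula together with \eqref{lem:E_coercive} and \eqref{lem:E_lipschitz_wrt_z_2}. The only cosmetic differences are that you spell out the localization argument (monotonicity of $q\mapsto bq-E_K(z_2,q)$ outside $[-K,K]$) more explicitly than the paper does, and your bound for $C_K$ is a bit looser than the paper's $(M_f-\delta_0)K+M_\ell$, which is immaterial since $C_K$ is only required to be finite.
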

\begin{proof}
The convexity of $p_2\mapsto E_K(z_2,p_2)$ comes from the convexity of $p_2\mapsto E(z_2,p_2)$ and from  (\ref{lem:E_lipschitz_wrt_p_2}).
From (\ref{lem:E_lipschitz_wrt_p_2}), it is also clear that $E_K^\star(z_2,b)=+\infty$ if $b\notin [-M_f, M_f]$, which implies (\ref{eq:representation_of_E_K}).
It can also be seen that if $b\in [-M_f, M_f]$, then 
\begin{equation}
\label{eq:sup_reached_E_K}
 E_K^\star(z_2,b)=\max_{p\in [-K,K]}\left(bp-E_K(z_2,p)\right).
\end{equation}
From (\ref{lem:E_coercive}), we check that  for $z_2\in \R$, $p_2\in [-K,K]$ and $b\in [-M_f,M_f]$,
\begin{equation}
\label{eq:proof_E_K_bounded}
bp_2 -M_f|p_2|-M_\ell \le bp_2- E_K(z_2,p_2) \le bp_2+ M_f(K-|p_2|)-\delta_0 K+M_\ell.
\end{equation}
Choosing $p_2=0$ yields that $E_K^\star(z_2,b_2)\ge -M_\ell$. Using the fact that $bp_2- M_f|p_2|\le 0$ if $|b|\le M_f$, we deduce that 
$E_K^\star(z_2,b)\le   C_K$  and finally (\ref{eq:E_K_star_bounded}) with   $ C_K =(M_f-\delta_0)K+M_\ell$. \\
It is standard to deduce \eqref{prop:regularity_E_star_K_1} and \eqref{prop:regularity_E_star_K_2} from  \eqref{eq:sup_reached_E_K}
and (\ref{lem:E_lipschitz_wrt_z_2}).
\end{proof}
Lemma~\ref{sec:proof-that-p_2} allows us to prove  the following comparison principle:
\begin{proposition}
\label{thm:comparison_principle_effective_eq_with_E_K}
 Let $u$  and $w$ be respectively a bounded subsolution and a bounded supersolution of
 \begin{equation}
\label{eq:effective_eq_with_E_K}
\begin{array}{lll}
\lambda u(z)+ H^i(z, D u(z))  = 0 & &\hbox{if } z\in \Omega^i,\\
\lambda u(z)+\max\left(E_K(z_2,\partial_{z_2}\varphi(z)),H_\Ga(z, D u^L(z),D u^R(z))\right)= 0  & &\hbox{if } z=(0,z_2)\in \Ga.
\end{array}
\end{equation} 
Then, $u\le w$ in $\R^2$.
\end{proposition}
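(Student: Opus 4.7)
The plan is to reduce \eqref{eq:effective_eq_with_E_K} to the type of transmission problem for which the comparison principle \cite[Theorem~3.4]{oudet2014} applies, with $E_K$ playing the role of an auxiliary ``tangential'' Hamiltonian on $\Gamma$ (of the form mentioned in the remark at the end of \S~\ref{sec:existence-uniqueness}). Precisely, by \eqref{eq:representation_of_E_K},
\begin{equation*}
E_K(z_2,p_2)=\max_{b\in [-M_f,M_f]}\bigl(-(-b)\,p_2-E_K^\star(z_2,b)\bigr),
\end{equation*}
so $E_K$ is the Hamiltonian attached to the control set $B=[-M_f,M_f]$, to the one-dimensional dynamics $f^0(z,b)=-b\,e_2$ (which is tangent to $\Gamma$), and to the running cost $\ell^0(z,b)=E_K^\star(z_2,b)$. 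From Lemma~\ref{sec:proof-that-p_2}, $B$ is compact, $\ell^0$ is bounded by $C_K$, continuous, and satisfies a modulus of continuity in $z_2$ uniform in $b$ (via \eqref{prop:regularity_E_star_K_2}); moreover the set $\{(f^0(z,b),\ell^0(z,b)): b\in B\}$ is convex, as a linear image of the graph of the convex function $b\mapsto E_K^\star(z_2,b)$.

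First I would check that with this interpretation the junction equation in \eqref{eq:effective_eq_with_E_K} falls exactly in the framework of \cite{oudet2014} when a tangential Hamiltonian $H^0$ is added to $H_\Gamma$: the structural assumptions on $H^0$ required there (boundedness, continuity in $z_2$ uniformly in $b$, convexity of the dynamics--cost set, and the fact that tangential motion does not interact with the normal controllability) are all supplied by Lemma~\ref{sec:proof-that-p_2}. The strong controllability assumption~$[{\rm H}3]$ is already satisfied by the $H^{+,L}, H^{+,R}$ components of $H_\Gamma$ inherited from $H^L, H^R$, so the coercivity needed on the normal direction at $\Gamma$ is in place; $E_K$ only adds behavior in the tangential direction and, by construction \eqref{eq:def_E_K}, is globally Lipschitz in $p_2$ with constant $M_f$.

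Next I would apply the local comparison at each point of $\R^2$. Away from $\Gamma$, \eqref{eq:effective_eq_with_E_K} reduces to the classical Hamilton--Jacobi equation $\lambda u+H^i(z,Du)=0$ in $\Omega^i$, for which the standard comparison principle applies, giving a local estimate $\|(u-w)_+\|_{L^\infty(B(z,r))}\le \|(u-w)_+\|_{L^\infty(\partial B(z,r))}$. At a point $z\in\Gamma$, the local comparison is the content of \cite[Theorem~3.3]{oudet2014}: one uses the doubling of variables, the subsolution Lipschitz regularity near $\Gamma$ coming from the analogue of Lemma~\ref{sec:prop-visc-sub-3}, and splits the test-penalization according to whether the optimal dynamics used in the supersolution inequality comes from $A^L$, $A^R$, or from the tangential control set $B$. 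The uniform Lipschitz bounds on $E_K^\star$ in $(z_2,b)$ provided by \eqref{prop:regularity_E_star_K_1}--\eqref{prop:regularity_E_star_K_2} are precisely what is needed to close the doubling argument in the tangential piece, exactly as in the proof of \cite[Theorem~3.3]{oudet2014}.

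Finally, the passage from local to global comparison is standard: since $u$ and $w$ are bounded, one adds a penalization $\eta|z|^2$ to $u$ (or uses the trick of replacing $u$ by $u-\eta\sqrt{1+|z|^2}$) to force a supremum of $u-w$ to be attained at a point, and applies the local estimate at that point to derive a contradiction as $\eta\to 0$. This is carried out verbatim as in the proof of \cite[Theorem~3.4]{oudet2014}. The main obstacle is verifying carefully that the modified Hamiltonian $\max(E_K, H_\Gamma)$ on $\Gamma$ fits in the framework of \cite{oudet2014}; once Lemma~\ref{sec:proof-that-p_2} is in hand this verification is essentially bookkeeping, and the conclusion $u\le w$ in $\R^2$ follows.
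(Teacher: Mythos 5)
Your approach coincides in essence with the paper's: you observe (as the paper does) that the Fenchel-transform representation of $E_K$ supplied by Lemma~\ref{sec:proof-that-p_2} casts $E_K$ as a tangential Hamiltonian on $\Gamma$ with compact control set, so that the transmission problem \eqref{eq:effective_eq_with_E_K} falls into the setting of \cite{oudet2014} with an extra junction Hamiltonian, and then you invoke the comparison machinery there. The paper's own proof is a one-liner: it cites Lemma~\ref{sec:proof-that-p_2} and appeals directly to the comparison principle in \cite[Theorem~2.5 and Remark~2.11]{oudet2014} (the version that already accommodates the extra tangential Hamiltonian and does not assume continuity of $u$), instead of re-running the doubling-and-localization argument of their Theorems 3.3--3.4 as you propose.

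One concrete inaccuracy worth flagging: you justify convexity of the dynamics--cost set $\{(f^0(z,b),\ell^0(z,b)):b\in B\}$ by saying it is ``a linear image of the graph of the convex function $b\mapsto E_K^\star(z_2,b)$.'' But the graph of a convex function is not a convex set (its epigraph is); the set you describe is a curve in $\R^2\times\R$ and is not convex unless $E_K^\star(z_2,\cdot)$ is affine. This does not break the argument --- what the comparison principle in \cite{oudet2014} actually requires of the tangential Hamiltonian is precisely what Lemma~\ref{sec:proof-that-p_2} delivers (compact control set, boundedness of $E_K^\star$, Lipschitz dependence on $b$, and a modulus of continuity in $z_2$), not convexity of this set --- but the convexity claim as written is false and should be dropped or corrected.
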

\begin{proof}
Thanks to Lemma~\ref{sec:proof-that-p_2},  it is possible to apply  \cite[Theorem 2.5]{oudet2014}, more precisely
the general comparison principle discussed in \cite[Remark~2.11]{oudet2014}, since  the continuity of $u$ is not assumed.
\end{proof}
\begin{theorem}
\label{th:comparison_effective_HJB_eq}
Let $u:\R^2\to \R$ and   $v:\R^2\to \R$  be respectively a bounded subsolution and  a bounded supersolution of \eqref{def:HJeffective_short}. Then $u\le v$ in $\R^2$.
\end{theorem}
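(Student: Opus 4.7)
The plan is to reduce the theorem to the comparison result already available in Proposition~\ref{thm:comparison_principle_effective_eq_with_E_K} by showing that, for a suitably chosen constant $K$, the subsolution $u$ and supersolution $v$ of \eqref{def:HJeffective_short} are also, respectively, a sub- and a supersolution of the modified problem \eqref{eq:effective_eq_with_E_K}. The main obstacle is precisely the one that prevents a direct application of \cite[Theorem~2.5]{oudet2014} to $E$, namely the lack of a uniform (in $z_2$) control of the truncation constants $\hat K(z_2)$ and $\check K(z_2)$ from Lemma~\ref{lem:regularity_of_ergodic_hamiltonian}; the key idea of the proof is to bypass this by extracting an a priori bound directly from the subsolution condition.

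First I would choose $K := (\lambda \parallel u\parallel_\infty + M_\ell)/\delta_0$. The coercivity estimate $E(z_2, p_2) \ge \delta_0|p_2| - M_\ell$ in \eqref{lem:E_coercive} forces an a priori bound on $\partial_{z_2}\phi$ at any contact point on $\Gamma$: if $\phi \in \cR$ and $u - \phi$ attains a local maximum at $\bar z = (0,\bar z_2) \in \Gamma$, the subsolution inequality gives
\begin{equation*}
E(\bar z_2, \partial_{z_2}\phi(\bar z)) \le -\lambda u(\bar z) \le \lambda \parallel u\parallel_\infty,
\end{equation*}
so that $|\partial_{z_2}\phi(\bar z)| \le K$, and therefore $E_K(\bar z_2, \partial_{z_2}\phi(\bar z)) = E(\bar z_2, \partial_{z_2}\phi(\bar z))$. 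Since the equations in $\Omega^L$ and $\Omega^R$ are unchanged between \eqref{def:HJeffective_short} and \eqref{eq:effective_eq_with_E_K}, this shows that $u$ is a subsolution of \eqref{eq:effective_eq_with_E_K}.

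Next I would check that $v$ is automatically a supersolution of \eqref{eq:effective_eq_with_E_K} for any $K>0$. This follows from the pointwise inequality $E_K \ge E$, itself an immediate consequence of the $M_f$-Lipschitz continuity of $p_2 \mapsto E(z_2, p_2)$ in \eqref{lem:E_lipschitz_wrt_p_2}: for $p_2 > K$ one has $E(z_2, p_2) \le E(z_2, K) + M_f(p_2 - K) = E_K(z_2, p_2)$, and symmetrically for $p_2 < -K$, with equality for $|p_2| \le K$. Hence $\max(E_K(z_2, \cdot), H_\Gamma(z, \cdot, \cdot)) \ge \max(E(z_2, \cdot), H_\Gamma(z, \cdot, \cdot))$ and the supersolution inequality passes from \eqref{def:HJeffective_short} to \eqref{eq:effective_eq_with_E_K}.

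Once both properties have been established, an application of Proposition~\ref{thm:comparison_principle_effective_eq_with_E_K} with this particular choice of $K$ yields $u \le v$ in $\R^2$ without further work. In this scheme the substantive ingredients have been concentrated in Lemma~\ref{lem:regularity_of_ergodic_hamiltonian} (coercivity and Lipschitz bounds on $E$) and in Proposition~\ref{thm:comparison_principle_effective_eq_with_E_K} (comparison for the truncated Hamiltonian); the present proof is merely the gluing step, but the use of the coercivity of $E$ to a priori bound the tangential derivative of test functions is the conceptual heart of the argument.
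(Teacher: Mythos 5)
Your proof is correct, and it takes a slightly different and arguably cleaner route than the paper's. The paper first notes that $u$ is a fortiori a subsolution of the system with only $H_\Ga$ on $\Ga$, then invokes \cite[Lemma~2.6]{oudet2014} (recorded here as Lemma~\ref{sec:prop-visc-sub-3}) to get that $u$ is Lipschitz continuous near $\Ga$ with constant $L_u \le (\lambda\|u\|_\infty + M_\ell)/\delta_0$, and uses this Lipschitz regularity to bound $|\partial_{z_2}\phi|$ at any contact point on $\Ga$, finally taking $K\ge L_u$. You skip the auxiliary regularity result entirely and extract the bound $|\partial_{z_2}\phi(\bar z)|\le K$ directly from the subsolution inequality at $\bar z\in\Ga$ combined with the coercivity estimate \eqref{lem:E_coercive} for $E$. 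Both arguments ultimately rest on coercivity (that of $H^i$ drives the Lipschitz estimate in the paper's route; that of $E$ drives your a priori bound), and both produce the same constant $K=(\lambda\|u\|_\infty+M_\ell)/\delta_0$, but yours is more self-contained since it does not appeal to a Lipschitz regularity theorem for subsolutions near the interface. The supersolution half and the final appeal to Proposition~\ref{thm:comparison_principle_effective_eq_with_E_K} are identical to the paper's.
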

\begin{proof}
Since $u$ is a subsolution of \eqref{def:HJeffective_short}, it is also a subsolution of
\begin{eqnarray*}
\lambda u(z)+ H^i(z, D u(z))  = 0 & &\hbox{if } z\in \Omega^i,\\
\lambda u(z)+H_\Ga(z, D u^L(z),D u^R(z))= 0  & &\hbox{if } z=(0,z_2)\in \Ga.  
\end{eqnarray*}
Thanks to Assumptions $[\rm{H}0]$-$[\rm{H}3]$, we can apply \cite[Lemma~2.6]{oudet2014} to $u$:
 there exists $r>0$ such that $u$ is Lipschitz continuous in $ [-r,r]\times \R$.
 Let us call $L_u$ the Lipschitz constant of the restriction of $u$ to $ [-r,r]\times \R$  and choose $K\ge L_u$. Since $E_K$ coincides with $E$ on $\R\times [-K,K]$, we deduce that  $u$ is a subsolution of \eqref{eq:effective_eq_with_E_K}. \\
On the other hand, since $E_K\ge E$,  $v$ is a supersolution of \eqref{eq:effective_eq_with_E_K}.\\
The proof is achieved by applying Proposition \ref{thm:comparison_principle_effective_eq_with_E_K} to the pair $(u,v)$.
\end{proof}

\section{Proof of the main result}\label{sec:proof-main-result}
\subsection{ A reduced set of test-functions}
From \cite{imbert:hal-00832545} and \cite{imbert:hal-01073954},  we may use an equivalent definition for the  viscosity  solution of (\ref{def:HJeffective_short}).
\begin{definition}
\label{def:test_functions_set_restricted}
Recall that $\overline{\Pi}^i$ and $\widehat{\Pi}^i$, $i=L,R$, have been introduced in \eqref{eq:5}-\eqref{eq:10}.
Let $\Pi: \R^2 \to \R^2$,  $(z_2, p_2)  \mapsto \left( \Pi^L(z_2,p_2),\Pi^R(z_2,p_2) \right)$ be such that, for all $(z_2,p_2)$
\begin{equation}
  \label{eq:18}
  \begin{split}
    \widehat{\Pi}^L(z_2,p_2)  &\le   \Pi^L (z_2,p_2)  \le  \overline{\Pi}^L(z_2,p_2) ,\\ 
\overline{\Pi}^R (z_2,p_2)   &\le   \Pi^R(z_2,p_2)   \le  \widehat{\Pi}^R(z_2,p_2) .
  \end{split}
\end{equation}
For $\bar z=(0,\bar z_2) \in \Gamma$, the reduced set of test-functions  $\cR^\Pi(\bar z)$ associated to the map $\Pi$ is
the set of the functions  $\varphi\in \cC^0 (\R^2)$ such that there exists a $\cC^1$ function $\psi:\R\to \R$ with 
\begin{equation}
  \label{eq:17}
\varphi(z)= \psi(z_2)+   \left( \Pi^R\left(\bar z_2, \psi'(\bar z_2) \right) 1_{z\in \Omega^R}+\Pi^L \left(\bar z_2, \psi'(\bar z_2)\right) 1_{z\in \Omega^L} \right) z_ 1.
\end{equation}
\end{definition}
The following theorem is reminiscent of \cite[Theorem~2.6]{imbert:hal-00832545}.
\begin{theorem} \label{th:restriction_set_of_test_functions}
 Let $u:\R^2\to \R$ be a subsolution (resp. supersolution) of \eqref{def:HJeffective1} and a map $\Pi: \R^2\to \R^2$, $(z_2, p_2)  \mapsto \left( \Pi^L(z_2,p_2),\Pi^R(z_2,p_2) \right)$ such that \eqref{eq:18} holds for all $(z_2,p_2)\in \R^2$.
\\
The function $u$ is a subsolution (resp. supersolution) of \eqref{def:HJeffective2}
 if and only if for any $z=(0,z_2)\in \Ga$ and for all $\varphi \in \cR^\Pi(z)$ such that $u-\varphi$ has a local maximum (resp. local minimum) at $z$,
\begin{equation}
\label{eq:th_restriction_set_of_test_functions}
\lambda u(z)+\max\left(E(z_2,\partial_{z_2}\varphi(z)), H_\Ga(z, D \varphi^L(z),D \varphi^R(z)) \right) \le 0, \quad (\hbox{resp. }\ge 0),
\end{equation}
where the meaning of $D \varphi^L$ and $D \varphi^R$  is given in Definition~\ref{adtest}.
\end{theorem}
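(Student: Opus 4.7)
The inclusion $\cR^\Pi(\bar z)\subset\cR$ makes the ``only if'' direction immediate for both the subsolution and the supersolution statements. For the ``if'' direction, I focus on the subsolution case, the supersolution case being dual.

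Given $\varphi\in\cR$ with $u-\varphi$ having a local maximum at $\bar z=(0,\bar z_2)\in\Gamma$, I normalize so that $u(\bar z)=\varphi(\bar z)$ and set $p_2=\partial_{z_2}\varphi(\bar z)$, $p_1^i=\partial_{z_1}\varphi^i(\bar z)$ for $i=L,R$. The first step is to observe that, since $u$ is a subsolution of $\lambda u+H^i(\cdot,Du)=0$ in each open half-plane $\Omega^i$, a classical perturbation argument (shifting the local maximum slightly into the interior by adding $\pm\epsilon z_1$ to $\varphi$, applying the interior inequality at the perturbed maximum point, and passing to the limit using upper semicontinuity of $u$ and the $C^1$-regularity of $\varphi^i$ up to $\Gamma$) gives $\lambda u(\bar z)+H^i(\bar z,p_1^i e_1+p_2 e_2)\le 0$. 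Since $H^{+,i}\le H^i$ by definition, the $H^{+,i}$-terms in \eqref{eq:th_restriction_set_of_test_functions} are automatically controlled, and the proof reduces to establishing $(\star): \lambda u(\bar z)+E(\bar z_2,p_2)\le 0$.

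To prove $(\star)$, I distinguish two cases according to the position of $(p_1^L,p_1^R)$ relative to $(\Pi^L(\bar z_2,p_2),\Pi^R(\bar z_2,p_2))$. In the \emph{favorable case} $p_1^L>\Pi^L$ and $p_1^R<\Pi^R$, the reduced test function
\[
\tilde\varphi(z)=\psi(z_2)+\Pi^i(\bar z_2,p_2)\,z_1\ \hbox{on }\Omega^i,\qquad \psi(z_2)=\varphi(0,z_2)+K(z_2-\bar z_2)^2,
\]
with $K$ large, belongs to $\cR^\Pi(\bar z)$ and, by a direct Taylor expansion, satisfies $\tilde\varphi\ge\varphi$ on a neighborhood of $\bar z$ with equality at $\bar z$; hence $u-\tilde\varphi$ has a local maximum at $\bar z$. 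Applying the reduced hypothesis and invoking the bounds $H^{+,L}(\Pi^L e_1+p_2e_2)\le E$ and $H^{+,R}(\Pi^R e_1+p_2e_2)\le E$ (consequences of Lemma \ref{sec:effect-hamilt-gamma-1} combined with the definitions \eqref{eq:5}--\eqref{eq:10}, which place $\Pi^L,\Pi^R$ in the ``flat'' regions of $H^{+,L}$ and $H^{+,R}$) yields $(\star)$. In the \emph{opposite case} $p_1^L\le\Pi^L$ or $p_1^R\ge\Pi^R$, convexity of $H^i$ and the fact that $\Pi^L$ (resp.\ $\Pi^R$) is the leftmost (resp.\ rightmost) root of $p\mapsto H^i((0,\bar z_2),pe_1+p_2e_2)=E$ imply $H^i(p_1^i e_1+p_2 e_2)\ge E$ for the relevant index $i$; combined with the interior bound $H^i\le -\lambda u(\bar z)$ obtained in the first step, this forces $(\star)$ directly, without any appeal to the reduced hypothesis.

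The supersolution case proceeds by a dual but more delicate argument, since the interior supersolution inequality $\lambda u+H^i\ge 0$ does not immediately control $H^{+,i}$ from below. When $p_1^L\le\Pi^L$ and $p_1^R\ge\Pi^R$ one builds the mirror reduced test function satisfying $\tilde\varphi\le\varphi$ near $\bar z$ (so that $u-\tilde\varphi$ has a local minimum at $\bar z$) and applies the reduced hypothesis to obtain $\lambda u(\bar z)+E\ge 0$; in the complementary case one exploits the monotonicity ranges described in Lemma \ref{sec:effect-hamilt-gamma-1}, where $H^{+,i}$ coincides with $H^i$, to transfer the interior supersolution inequality into $\lambda u+H^{+,i}(p_1^i e_1+p_2 e_2)\ge 0$. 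The main technical obstacle in both directions is the precise placement of $p_1^i$ relative to $\Pi^i$ and the minimizer of $H^i$, which is resolved through the structural description of $H^{\pm,i}$ given in Lemma \ref{sec:effect-hamilt-gamma-1}.
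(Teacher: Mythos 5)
Your first step — the ``interior bound'' $\lambda u(\bar z)+H^i(\bar z,p_1^ie_1+p_2e_2)\le 0$ obtained by ``shifting the local maximum slightly into the interior'' — is false, and the rest of the argument leans on it. When $u-\varphi$ has a local maximum at $\bar z\in\Gamma$, adding $-\epsilon z_1$ to $\varphi$ on $\Omega^R$ does not in general dislodge the maximum from $\Gamma$: take for instance $H^R(p)=p_1$, $\lambda=1$, $u\equiv 0$ (a subsolution of $u+H^R(Du)=0$ in $\Omega^R$), and $\varphi(z)=z_1$; then $u-\varphi=-z_1$ has a strict max at $\bar z\in\Gamma$, the perturbed difference is $(\epsilon-1)z_1$ whose maximum is still at $z_1=0$ for $\epsilon<1$, yet $\lambda u(\bar z)+H^R(e_1)=1>0$. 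This is exactly why the paper's proof works through the critical-slope lemmas (Lemmas~\ref{tech_lem:critcal_slope_on_each_branch_subsolution} and~\ref{tech_lem:critcal_slope_on_each_branch_supersolution}): for each $i$ there is a nonpositive $\bar p_i$ with $\lambda u(\bar z)+H^i(\bar z, D\varphi^i(\bar z)+\sigma^i\bar p_i e_1)\le 0$, i.e.\ the inequality only holds after replacing $p_1^L$ by the larger $p_1^L-\bar p_L$ and $p_1^R$ by the smaller $p_1^R+\bar p_R$. In the example above $\bar p_R=-1$ and the corrected inequality $H^R(0)=0\le 0$ does hold.

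This gap propagates into your case analysis. In your ``opposite case'' you combine $H^i(p_1^ie_1+p_2e_2)\ge E$ with the (false) interior bound $H^i\le -\lambda u(\bar z)$ to force $(\star)$; with the correct critical-slope version, the gradient at which $H^i\le-\lambda u(\bar z)$ holds has been shifted by an unknown nonpositive $\bar p_i$, and it may no longer lie in the region where $H^i\ge E$. Your ``favorable case'' does not use the interior bound and is essentially sound (the inequalities $H^{+,i}(\Pi^ie_1+p_2e_2)\le E$ follow from Lemma~\ref{sec:effect-hamilt-gamma-1} and the definitions~\eqref{eq:5}--\eqref{eq:10}, and the construction of $\tilde\varphi$ with the quadratic penalization $K(z_2-\bar z_2)^2$ is the same as in the paper). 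The paper sidesteps the case split on $p_1^i$ versus $\Pi^i$ entirely: after obtaining the two critical-slope inequalities, the $H_\Gamma$ part of~\eqref{eq:th_restriction_set_of_test_functions} follows from the monotonicity of $H^{+,i}$, and for $(\star)$ it argues by contradiction. Assuming $\lambda u(\bar z)+E>0$, the corrected inequalities force $H^{-,i}$ at the corrected slopes to be $<E$, and the monotonicity of $p\mapsto H^{-,i}$ then places the corrected slopes on the correct side of $\Pi^i$; this is precisely what makes the reduced test-function touch $u$ from above and closes the argument. You should replace your Step~1 with an appeal to a critical-slope lemma (as in \cite[Lemma~2.8]{imbert:hal-00832545}) and carry the shifted gradients through the rest of the proof.
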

\begin{proof}
The proof  follows the lines of that of \cite[Theorem~2.6]{imbert:hal-00832545}
 and is given in Appendix \ref{appendix:proof_th_restriction_set_of_test_functions} for the reader's convenience.
It is worth to note that Lemma \ref{sec:effect-hamilt-gamma-1} is  important in order to use the arguments 
contained in the proof of \cite[Theorem~2.6]{imbert:hal-00832545}.
\end{proof}

\subsection{Proof of Theorem~\ref{th:convergence_result}}

As seen in  $\S$ \ref{sec:asymptotic_behavior}, the result will be proved if we show that the sequence $\left(\tilde v_\epsilon\right)_\epsilon$  corresponding to the straightened geometry
converges to $v$.  We will actually prove that $\overline{\tilde{v}}$  and $\underline{\tilde{v}}$ defined in \eqref{def:v_tilde_overline_underline} are respectively a subsolution and a supersolution of \eqref{def:HJeffective_short}. From  Theorem \ref{th:comparison_effective_HJB_eq}, this will imply that  $\overline{\tilde{v}}=\underline{\tilde{v}}=v=\lim_{\epsilon\to 0}\tilde v_\epsilon$. Moreover, from Proposition \ref{supersub}, we just have to check the transmission condition \eqref{def:HJeffective2}.

\medskip

We restrict ourselves to checking that $\overline{\tilde{v}}$ is a subsolution of \eqref{def:HJeffective_short}, since the proof that $\underline{\tilde{v}}$ is a supersolution of \eqref{def:HJeffective_short} is similar. Take $\bar z=(0,\bar z_2)\in \Ga$. 
We are going to use Theorem \ref{th:restriction_set_of_test_functions} with the special choice for the map $\Pi:  \R^2 \to \R^2$: 
$\Pi(z_2,p_2)= \left( \overline \Pi^L   (z_2,p_2) ,\overline \Pi^R   (z_2,p_2)       \right)$. 
Take a  test-function $\varphi\in \cR^\Pi(\bar z)$, i.e. of the form 
\begin{equation}
\label{eq:19}
\varphi(z)= \psi(z_2)+   \left( \overline \Pi^R   (\bar z_2,   \psi'(\bar z_2) ) 1_{z\in \Omega^R}+\overline \Pi^L(\bar z_2,   \psi'(\bar z_2)) 1_{z\in \Omega^L} \right) z_ 1,
\end{equation}
for  a $\cC^1$ function $\psi:\R\to \R$,  such that $\overline{\tilde{v}}-\varphi$ has a strict local maximum at $\bar z$ and that $\overline{\tilde{v}}(\bar z)=\varphi(\bar z)$. \\
 Let us proceed by contradiction and assume that 
\begin{equation}
\label{eq:proof_convergence_sub_contradiction}
\lambda \varphi(\bar z)+ \max\left(E(\bar z_2,\partial_{z_2}\varphi(\bar z)), H_\Ga(\bar z, D \varphi^L(\bar z),D \varphi^R(\bar z)) \right)=\theta >0.
\end{equation}
From  (\ref{eq:19}),  we see that $H_\Ga(\bar z, D \varphi^L(\bar z),D \varphi^R(\bar z))\le E(\bar z_2,\partial_{z_2}\varphi(\bar z))$ and
 (\ref{eq:proof_convergence_sub_contradiction}) is equivalent to 
\begin{equation}
\label{eq:proof_convergence_sub_contradiction_bis}
\lambda \psi(\bar z_2)+E(\bar z_2,\psi'(\bar z_2))=\theta>0.
\end{equation}
Let  $\chi(\bar z_2,\psi'(\bar z_2),\cdot)$ be a solution of \eqref{cellpE} such that $\chi(\bar z_2,\psi'(\bar z_2),(0,0))=0$, see Theorem \ref{lem:existence_cellpE},
 and $W(\bar z_2,\psi'(\bar z_2),z_1)=\lim_{\epsilon\to 0}\epsilon\chi(\bar z_2, \psi'(\bar z_2),\frac z {\epsilon})$.

\paragraph{Step 1}
 We claim that for $\epsilon>0$ and $r>0$ small enough, the function $\varphi^\epsilon$:
 \[\varphi^\epsilon(z)=\psi(z_2)+\epsilon\chi(\bar z_2, \psi'(\bar z_2),\frac z {\epsilon})\] is a viscosity supersolution of 
\begin{equation}
\label{cellpE_modifed}
\left\{
    \begin{array}[c]{rcll}
   \lambda \varphi^\epsilon(z)+ \tilde{H}_\epsilon^i(z,D\varphi^\epsilon(z))& \ge& \frac{\theta}{2}\quad &\hbox{ if } z\in\Omega^i\cap B((0,\bar z_2),r),  \\
\lambda \varphi^\epsilon(z)+\tilde{H}_{\Ga,\epsilon} (z,D\left( \varphi^\epsilon\right)^L(z),D\left( \varphi^\epsilon\right)^R(z)) &\ge& \frac{\theta}{2}&\hbox{ if } z\in\Ga\cap B((0,\bar z_2),r),
    \end{array}
\right.
\end{equation}
where the Hamiltonians $\tilde{H}_\epsilon^i$ and $\tilde{H}_{\Ga,\epsilon} $ are defined by \eqref{eq:7new} and \eqref{eq:8new}. 
\\
Indeed, if $\xi$ is a test-function in $\cR$ such that $\varphi^\epsilon-\xi$ has a local minimum at  $z^\star\in B((0,\bar z_2),r)$, then, from the definition of $\varphi^\epsilon$,
 $y\mapsto \chi(\bar z_2, \psi'(\bar z_2),y)-\frac{1}{\epsilon}\left(\xi(\epsilon y)-\psi(\epsilon y_2) \right)$ has a local minimum at $\frac{z^\star}{\epsilon}$.
 Let us now use the fact that $y\mapsto \chi(\bar z_2, \psi'(\bar z_2),y)$ is a supersolution of \eqref{cellpE}:

\medskip 

If $\frac{z^\star}{\epsilon}\in \Omega^i$, for $i=L$ or  $R$, then 
$\tilde{H}^i((0,\bar z_2),D\xi(z^\star)-\psi'(z^\star_2)e_2+\psi'(\bar z_2)e_2,\frac{z^\star_2}{\epsilon})\ge E(\bar z_2,\psi'(\bar z_2))$.
From the regularity properties of   $\tilde{H}^i$, see Lemma \ref{sec:fast-slow-variables-2},
\begin{displaymath}
\tilde{H}^i((0,\bar z_2),D\xi(z^\star)-\psi'(z^\star_2)e_2+\psi'(\bar z_2)e_2,\frac{z^\star_2}{\epsilon})=\tilde{H}_\epsilon^i(z^\star,D\xi(z^\star)) + o_{\epsilon\to 0}(1)+o_{r\to 0}(1),    
\end{displaymath}
thus 
\begin{equation*}
\label{eq:proof_convergence_case1_nb1}
\begin{split}
&\lambda \varphi^\epsilon(z^\star)+ \tilde{H}_\epsilon^i(z^\star,D\xi(z^\star)) \\ \ge &E(\bar z_2,\psi'(\bar z_2))+\lambda\left(\psi(z^\star_2)+\epsilon\chi\left(\bar z_2, \psi'(\bar z_2),\frac{z^\star}{\epsilon}\right) \right)+o_{\epsilon\to 0}(1)+o_{r\to 0}(1).  
\end{split}
\end{equation*}
From \eqref{eq:proof_convergence_sub_contradiction_bis}, this implies that 
\begin{equation*}
\label{eq:proof_convergence_case1_nb2}
\lambda \varphi^\epsilon(z^\star)+ \tilde{H}_\epsilon^i(z^\star,D\xi(z^\star))   \ge  \theta+\lambda\epsilon\chi\left(\bar z_2, \psi'(\bar z_2),\frac{z^\star}{\epsilon}\right) 
+o_{\epsilon\to 0}(1)+o_{r\to 0}(1).
\end{equation*}
Recall that the function $y\mapsto \epsilon\chi(\bar z_2, \psi'(\bar z_2),\frac y {\epsilon})$ converges locally uniformly to  $y\mapsto W(\bar z_2, \psi'(\bar z_2),y)$,
which is a Lipschitz continuous function, independent of $y_2$ and such that   $W(\bar z_2, \psi'(\bar z_2),0)=0$.
 Therefore, for $ \epsilon$ and $r$ small enough, $\lambda \varphi^\epsilon(z^\star)+ \tilde{H}_\epsilon^i(z^\star,D\xi(z^\star))   \ge \frac{\theta}{2}$.
\medskip

If $\frac{ z^\star}{\epsilon}\in \Ga$, then, for some $i\in \{L,R\}$, $
 \tilde{H}^{+,i} (\bar z_2,D\xi^i(z^\star)-\psi'(z^\star_2)e_2+\psi'(\bar z_2)e_2,\frac{z^\star_2}{\epsilon})\ge E(\bar z_2,\psi'(\bar z_2))$.
Since the Hamiltonian $\tilde H^{+,i}$ enjoys the same regularity properties as  $\tilde H^i$, see Lemma~\ref{sec:fast-slow-variables-2},
 it is possible to use the same arguments as  in the case $\frac{z^\star}{\epsilon}\in \Omega^i$. For  $r$ and $\epsilon$ small enough,  
$\lambda \varphi^\epsilon(z^\star)+  \tilde{H}_{ \Ga,\epsilon}^{+,i}(z^\star,D\xi(z^\star))  \ge \frac{\theta}{2}$.
The claim that $\varphi^\epsilon$  is a supersolution of \eqref{cellpE_modifed} is proved.
\paragraph{Step 2} Let us prove that there exist some positive constants $K_r>0$ and  $\epsilon_0>0$ such that
\begin{equation}
\label{eq:proof_convergence_case1_nb3}
\tilde v^\epsilon(z)+K_r\le \varphi^\epsilon(z), \quad \forall z\in \partial B(\bar z, r), \;\forall \epsilon\in (0, \epsilon_0).
\end{equation}
Indeed, since $\overline{\tilde{v}}-\varphi$ has a strict local maximum at $\bar z$ and since $\overline{\tilde{v}}(\bar z)=\varphi(\bar z)$, 
there exists a positive constant $\tilde K_r>0$ such that 
$\overline{\tilde{v}}(z)+\tilde K_r\le \varphi(z)$ for any $z\in \partial B(\bar z, r)$.
Since $\ds \overline{\tilde{v}}={\limsup_\epsilon}^{*} \tilde{v}_\epsilon$,  there exists $\tilde \epsilon_0>0$ such that 
$\tilde v^\epsilon(z)+\frac{\tilde K_r}{2}\le \varphi(z)$
for any $0<\epsilon<\tilde \epsilon_0$ and $z\in \partial B(\bar z, r)$.
But $z\mapsto \varphi^\epsilon(z)$ converges locally uniformly to $z\mapsto \psi(z_2)+W(\bar z_2,\psi'(\bar z_2),z)$ as $\epsilon$ tends to $0$.
 Hence, thanks to  \eqref{eq:control_slopes_W_summary} in Corollary \ref{cor:control_slopes_W}, 
 \begin{displaymath}
   \psi(z_2)+W(\bar z_2,\psi'(\bar z_2),z)\ge
 \psi(z_2)+   \left( \overline \Pi^R   (\bar z_2,   \psi'(\bar z_2) ) 1_{z\in \Omega^R}+\overline \Pi^L(\bar z_2,   \psi'(\bar z_2)) 1_{z\in \Omega^L} \right) z_ 1,
 \end{displaymath}
and we get \eqref{eq:proof_convergence_case1_nb3} 
for some constants $K_r>0$ and $\epsilon_0>0$.

\paragraph{Step 3}
From the previous steps and the local comparison principle in Theorem \ref{th: local comparison_ps}, we find that for $r$ and $\epsilon$ small enough, 
\begin{displaymath}
\tilde v^\epsilon(z)+K_r\le \varphi^\epsilon(z) \quad \quad \forall z \in B(\bar z, r).
\end{displaymath}
Taking $z=\bar z$ and letting $\epsilon\to 0$,  we obtain
\begin{displaymath}
\overline{\tilde{v}}(\bar z)+K_r\le \psi(\bar z_2)=\varphi(\bar z)=\overline{\tilde{v}}(\bar z),
\end{displaymath}
which cannot happen. The proof is completed.

\begin{remark}
For the proof of the supersolution inequality, the test-function $\varphi$ should be chosen  of the form 
\begin{displaymath}
\varphi(z_1,z_2)=\psi(z_2)+1_{\Omega^R}\widehat{\Pi}^R(\bar z_2, \psi'(\bar z_2))z_1+1_{\Omega^L}\widehat{\Pi}^L(\bar z_2, \psi'(\bar z_2))z_1,
\end{displaymath}
where $\psi\in \cC^1(\R)$ and for $i=L,R$, $\widehat{\Pi}^i(\bar z_2, \psi'(\bar z_2))$ are defined in \eqref{eq:6} and \eqref{eq:10}.
\end{remark}

\appendix

\section{Proof of Lemma \ref{lem:stability_for_truncated_cell_problem}}
\label{appendix:proof_stability_truncated_cell_pb}
\paragraph{Subsolutions}
Let $\varphi\in \cR_\rho$ be a test-function and $\bar y\in [-\rho,\rho]\times \R$ be such that $u^0-\varphi$ has a strict local maximum at $\bar y$.
  If $\bar y\in (-\rho,0)\times \R$,  (resp. $\bar y \in (0,\rho)\times \R$ ) is  standard to check that   $\tilde H^L((0,z_2), D(\bar y)+p_2 e_2,\bar y_2)\le \lambda$,
 (resp. $\tilde H^R((0,z_2), D(\bar y)+p_2 e_2,\bar y_2)\le \lambda$).
  We may focus on the case when $\bar y=(0,\bar y_2)\in \Ga$, because the cases $\bar y_1 =\pm \rho$ can be treated with similar but simpler arguments.  
We wish to prove that
\begin{equation}
\label{eq:appendix_proof8stability_truncated1}
 \tilde{H}^{+,i} ((0,z_2),D\varphi^i(\bar y)+p_2e_2 \bar y_2)\le \lambda,   \quad \forall i=L,R.
\end{equation}
We may assume that for all $\eta\in [0,1]$, the function $u^\eta-\varphi$ is Lipschitz continuous in $[-\rho,\rho]\times \R$ 
with a Lipschitz constant $\bar L$ independent of $\eta$. Fix $i=L,R$, we define the distance $d_i$ to $\bar \Omega_i$ by
\begin{displaymath}
d_i(y)=\left\lbrace
\begin{array}{ll}
0 & \hbox{ if } y\in \Omega^i,\\
|y_1| & \hbox{ otherwise.}
\end{array}
\right.
\end{displaymath}
Clearly, $d_i\in \cR_\rho$. Take $C=\bar L+1$. The function $y\mapsto u^0(y)-\varphi(y)-Cd_i(y)$ has a strict local maximum at $\bar y$.
Thanks to the local uniform convergence of $u^\eta$ to $u^0$, there exists $r\in (0,\rho)$ and a sequence of points $y^\eta \in B(\bar y,r)$ such that
\begin{displaymath}
u^\eta(y)-\varphi(y)-Cd_i(y)\le u^\eta(y^\eta)-\varphi(y^\eta)-Cd_i(y^\eta), \quad \hbox{ for all } y\in B(\bar y,r).
\end{displaymath}
Up to the extraction of  a subsequence, we may assume that $y^\eta\to \bar y$ as $\eta$ tends to $0$.
Note that $y^\eta\in \bar \Omega^i$. Indeed, if it was not the case, then calling $\overline{y^\eta}=(0,y_2^\eta)\in  B(\bar y,r)$,
\begin{displaymath}
u^\eta(y^\eta)-\varphi(y^\eta)-\left( u^\eta(\overline{y^\eta})-\varphi(\overline{y^\eta})\right) \le \bar L |y_1^\eta|=\bar L d_i(y^\eta),
\end{displaymath}
and $
u^\eta(y^\eta)-\varphi(y^\eta)-Cd_i(y^\eta)\le u^\eta(\overline{y^\eta})-\varphi(\overline{y^\eta})-d_i(y^\eta)< u^\eta(\overline{y^\eta})-\varphi(\overline{y^\eta})-Cd_i(\overline{y^\eta})
$, in contradiction with the definition of $y^\eta$.\\
Up to the  extraction of subsequences, we can make out two cases:\\
{\bf Case 1:} $y^\eta\in \Ga$. We obtain $
\eta u^\eta(y^\eta)+\max_{i=L,R}\left(  \tilde{H}^{+,i} ((0,z_2),Du^i(y^\eta)+p_2e_2,y^\eta_2)\right) \le \lambda_\eta$,
and then \eqref{eq:appendix_proof8stability_truncated1} by letting $\eta\to 0$.
\\{\bf Case 2:} $y^\eta\in \Omega^i$. We obtain that
$
\eta u^\eta(y^\eta)+\tilde H^i((0,z_2),D\varphi(y^\eta)+p_2e_2,y^\eta_2)\le \lambda_\eta $,
and then by letting $\eta\to 0$ that $\tilde H^i((0,z_2),D\varphi(\bar y)+p_2e_2,\bar y_2)\le \lambda$,
from the continuity of  the Hamiltonian $\tilde H^i$. Finally, since $\tilde H^i-\tilde H^{+,i}\ge 0$, which yields 
that $ \tilde{H}^{+,i} ((0,z_2),D\varphi^i(\bar y)+p_2e_2 \bar y_2)\le \lambda$. \\
Since the arguments above can be applied for $i=L$ and $i=R$, we have obtained \eqref{eq:appendix_proof8stability_truncated1}.

\paragraph{Supersolutions}
Let $\varphi\in \cR_\rho$ be a test-function and $\bar y\in [-\rho,\rho]\times \R$ be such that $u^0-\varphi$ has a strict local minimum at $\bar y$.
 As above, we may focus  on the case when $\bar y\in \Ga$. We wish to prove that
\begin{equation}
\label{eq:appendix_proof8stability_truncated2bis}
 \max_{i=L,R}\left(\tilde{H}^{+,i} ((0,z_2),D\varphi^i(\bar y)+p_2e_2,\bar y_2)\right)\ge \lambda.
\end{equation}
Define
\begin{eqnarray*}
&\tilde p_0^L&=
\\ & \min&\left\{
  \begin{array}[c]{l}
p\in \R:  \\\tilde H^L((0,z_2),(\partial_{y_2}\varphi(\bar y)+p_2)e_2+pe_1,\bar y_2)=\tilde H^{+,L}((0,z_2),(\partial_{y_2}\varphi(\bar y)+p_2)e_2+pe_1,\bar y_2)    
  \end{array}
\right\},\\  
\\
&\tilde p_0^R&=\\
&\max&\left\{
  \begin{array}[c]{l}
p\in \R: \\\tilde H^R((0,z_2),(\partial_{y_2}\varphi(\bar y)+p_2)e_2+pe_1,\bar y_2)=\tilde H^{+,R}((0,z_2),(\partial_{y_2}\varphi(\bar y)+p_2)e_2+pe_1,\bar y_2)
  \end{array}\right\}
\end{eqnarray*}
Recall that thanks to Lemma \ref{sec:effect-hamilt-gamma-1} and Remark \ref{sec:fast-slow-variables-1},
\begin{eqnarray*}
\tilde H^{+,L}((0,z_2),(\partial_{y_2}\varphi(\bar y)+p_2)e_2+pe_1,\bar y_2)=\left\lbrace
\begin{array}{ll}
\tilde E_0^L(z_2,\partial_{y_2}\varphi(\bar y)+p_2,\bar y_2) & \hbox{ if } p\le \tilde p_0^L,\\
\tilde H^{1}((0,z_2),(\partial_{y_2}\varphi(\bar y)+p_2)e_2+pe_1,\bar y_2) & \hbox{ if } p\ge \tilde p_0^L,
\end{array}
\right.  
\\
\tilde H^{+,R}((0,z_2),(\partial_{y_2}\varphi(\bar y)+p_2)e_2+pe_1,\bar y_2)=\left\lbrace
\begin{array}{ll}
\tilde H^{2}((0,z_2),(\partial_{y_2}\varphi(\bar y)+p_2)e_2+pe_1,\bar y_2) & \hbox{ if } p\le \tilde p_0^R,\\
\tilde E_0^R(z_2,\partial_{y_2}\varphi(\bar y)+p_2,\bar y_2) & \hbox{ if } p\ge \tilde p_0^R.
\end{array}
\right.
\end{eqnarray*}
where $\tilde E_0^i(z_2,p_2,y_2)$ is defined in Remark \ref{sec:fast-slow-variables-1}.\\ 
We make out two cases:\\
{\bf Case 1:}  $\partial_{y_1}\varphi^L(\bar y)\ge \tilde p_0^L$, $\partial_{y_1}\varphi^R(\bar y)\le \tilde p_0^R$ and 
for each $i\in \{L,R\}$ 
\begin{equation}
\label{eq:appendix_proof8stability_truncated3}
\tilde H^{i}((0,z_2),D\varphi^i(\bar y)+p_2e_2,\bar y_2)=\max_{j=L,R}\left( \tilde H^{+,j}((0,z_2),D\varphi^j(\bar y)+p_2e_2,\bar y_2)\right).
\end{equation}
In this case, we can use a standard stability argument: there exists a sequence $y^\eta$ of local minimum points of $u^\eta-\varphi$ which converges to $\bar y$ as $\eta$ tends to $0$. If, for a subsequence still called  $y^\eta$, $y^\eta\in \Ga$, then 
$
\eta u^\eta (y^\eta)+\max_{i=L,R}\left( \tilde H^{+,i}((0,z_2),D\varphi^i(y^\eta)+p_2e_2, y^\eta_2)\right)\ge \lambda_\eta$,
because $u^\eta$ is a supersolution of \eqref{trunc-cellpapprox}, and \eqref{eq:appendix_proof8stability_truncated2bis} is obtained  by letting $\eta\to 0$. 
\\
If for a subsequence, $y^\eta\in \Omega^i$ for some $i\in \{L,R\}$, 
then $\eta u^\eta (y^\eta)+\tilde H^{i}((0,z_2),D\varphi(y^\eta)+p_2e_2, y^\eta_2)\ge \lambda_\eta$,
and by letting $\eta\to 0$, $\tilde H^{i}((0,z_2),D\varphi^i(\bar y)+p_2e_2, \bar y_2)\ge \lambda$.
Finally, \eqref{eq:appendix_proof8stability_truncated2bis} is obtained thanks to \eqref{eq:appendix_proof8stability_truncated3}.\\
{\bf Case 2:}  the assumptions of the case 1 are not satisfied. Thanks to 
the above identities for $\tilde H^{+,R}((0,z_2),(\partial_{y_2}\varphi(\bar y)+p_2)e_2+pe_1,\bar y_2)$ and $\tilde H^{+,L}((0,z_2),(\partial_{y_2}\varphi(\bar y)+p_2)e_2+pe_1,\bar y_2)$, 
by  modifying the slopes of $\varphi$ in the normal direction on each side of $\Ga$, it is possible to construct a test-function $\psi\in \cR_\rho$ such that
$\psi(\bar y)=\varphi(\bar y)$, $\partial_{y_2}\psi(\bar y)=\partial_{y_2}\varphi(\bar y)$, $\partial_{y_1}\psi^L(\bar y)\ge \partial_{y_1}\varphi^L(\bar y)$,
$\partial_{y_1}\psi^R(\bar y)\le \partial_{y_1}\varphi^R(\bar y)$, and  $\psi$ satisfies \eqref{eq:appendix_proof8stability_truncated3} for each $i\in \{L,R\}$.
Thus, since $\psi$ touches $\varphi$ at $\bar y$ from below,
 $\bar y$ is still a strict local minimum point of $u^0-\psi$ and we conclude by applying the result proved in the first case.

\section{Proof of Theorem \ref{th:restriction_set_of_test_functions}}
\label{appendix:proof_th_restriction_set_of_test_functions}
The proof of Theorem \ref{th:restriction_set_of_test_functions} follows the lines of \cite[Theorem~2.6]{imbert:hal-00832545} and relies on the following two technical lemmas,
which can be proved by adapting the proofs of Lemma 2.8 and Lemma 2.9 in \cite{imbert:hal-00832545}, with very slight changes. 
\begin{lemma}
\label{tech_lem:critcal_slope_on_each_branch_subsolution}
  Let $u:\R^2\to \R$ be a subsolution of \eqref{def:HJeffective1} and $\phi\in \cR$  touching $u$ from above at $\bar z=(0,\bar z_2)\in \Ga$. For each $i\in \{L,R\}$,  the real number $\bar p_i$:
  \begin{equation*}
  \bar p_i = \inf\left\lbrace p\in \R\; :\; \exists r>0   \hbox{ s.t. }     \phi(z)+\sigma^ipz_1\ge u(z), \; \forall z=(z_1,z_2)\in [0,r)\times (\bar z_2-r,\bar z_2+ r)
  \right\rbrace,
  \end{equation*}
  where $\sigma^i$ is given by \eqref{eq:def_sigma^i}, is nonpositive. Moreover,
  \begin{equation}
  \label{eq:lemma_critcal_slope_on_each_branch_subsolution}
\lambda u(0,\bar z_2) + H^i((0,\bar z_2),D \phi^i(0,\bar z_2)+\sigma^i\bar p_i e_1)\le 0.  
  \end{equation}
\end{lemma}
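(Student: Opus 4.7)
The nonpositivity $\bar p_i \le 0$ is immediate: since $\phi$ touches $u$ from above at $\bar z$, there is $r_0 > 0$ with $\phi \ge u$ on $B(\bar z, r_0)$, and the value $p = 0$ makes $\phi(z) + \sigma^i \cdot 0 \cdot z_1 = \phi(z) \ge u(z)$ hold on the one-sided neighborhood of $\bar z$ appearing in the definition, so $0$ belongs to the set and $\bar p_i \le 0$.

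For (\ref{eq:lemma_critcal_slope_on_each_branch_subsolution}) the plan is to exploit the infimum characterization of $\bar p_i$ in order to produce local maxima of $u$ minus a perturbation of $\phi + \sigma^i\bar p_i z_1$ lying strictly inside $\Omega^i$, then invoke the interior subsolution property of $u$ and let the perturbation parameter tend to zero. For each small $\eta>0$, I will work on $\overline{\Omega^i} \cap \overline{B(\bar z,r)}$ with $r\le r_0$ small (eventually $r=r(\eta) \to 0$) and consider
\[ \Psi_\eta(z) = u(z) - \phi(z) - \sigma^i (\bar p_i - \eta) z_1 - \eta\, |z - \bar z|^2. \]
Since $\bar p_i - \eta < \bar p_i$, this slope does not belong to the set defining $\bar p_i$, so arbitrarily close to $\bar z$ there exist points $z \in \overline{\Omega^i}$ at which $u(z) > \phi(z) + \sigma^i(\bar p_i - \eta)z_1$; such points necessarily satisfy $\sigma^i z_1 > 0$ (on $\Gamma$ the inequality would contradict $\phi \ge u$). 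Together with $\Psi_\eta(\bar z) = 0$ and $\Psi_\eta \le 0$ on $\Gamma \cap B(\bar z,r)$, this forces the supremum of $\Psi_\eta$ on $\overline{\Omega^i} \cap \overline{B(\bar z,r)}$ to be strictly positive and, after suitable choice of $r$, to be attained at a point $z_\eta^\star$ lying in the open set $\Omega^i \cap B(\bar z,r)$ rather than on $\partial B(\bar z,r)$.

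The function $\tilde\phi_\eta(z) = \phi(z) + \sigma^i(\bar p_i - \eta)z_1 + \eta |z-\bar z|^2$ is a smooth test-function touching $u$ from above near $z_\eta^\star$, so the viscosity subsolution property of $u$ in $\Omega^i$ yields
\[ \lambda u(z_\eta^\star) + H^i\bigl(z_\eta^\star,\, D\phi^i(z_\eta^\star) + \sigma^i(\bar p_i - \eta)e_1 + 2\eta(z_\eta^\star-\bar z)\bigr) \le 0. \]
Letting $\eta \to 0$, the choice $r(\eta) \to 0$ gives $z_\eta^\star \to \bar z$; upper semicontinuity of $u$ yields $\limsup u(z_\eta^\star)\le u(\bar z)$, while $\Psi_\eta(z_\eta^\star)\ge 0$ combined with continuity of $\phi$ gives $\liminf u(z_\eta^\star) \ge \phi(\bar z) = u(\bar z)$, whence $u(z_\eta^\star) \to u(\bar z)$. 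Continuity of $H^i$ and of $D\phi^i$ up to $\Gamma$ then delivers (\ref{eq:lemma_critcal_slope_on_each_branch_subsolution}).

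The main obstacle I expect is the joint calibration of $\eta$ and $r$: the definition of $\bar p_i$ only yields some strictly positive excess $u - \phi - \sigma^i(\bar p_i - \eta) z_1$ on arbitrarily small one-sided balls, without any quantitative lower bound. The penalization $\eta |z - \bar z|^2$ must simultaneously be weak enough not to swallow that excess and yet strong enough to repel the maximum from $\partial B(\bar z,r)$; this may force shrinking $r$ faster than $\eta$, or replacing the quadratic penalty by a finer one, but such tuning is standard in the Imbert--Monneau-type arguments already used elsewhere in the paper.
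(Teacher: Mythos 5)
Your proof of $\bar p_i\le 0$ is fine, and your overall scheme (lower the slope to $\bar p_i-\eta$, penalize, obtain an interior touching point in $\Omega^i$, use the interior subsolution inequality, pass to the limit) is exactly the Imbert--Monneau critical-slope argument that the paper invokes. But the pivotal step -- that the supremum of $\Psi_\eta$ on $\overline{\Omega^i}\cap\overline{B(\bar z,r)}$ is strictly positive and, ``after suitable choice of $r$'', attained in the open set $\Omega^i\cap B(\bar z,r)$ -- is not proved, and it does not follow from the only two facts you use (that $\phi\ge u$ locally and that $\bar p_i-\eta$ is not an admissible slope). Non-admissibility of $\bar p_i-\eta$ only gives points arbitrarily close to $\bar z$ where the excess $e(z)=u(z)-\phi(z)-\sigma^i(\bar p_i-\eta)z_1$ is positive, with no quantitative lower bound, so the penalty $\eta|z-\bar z|^2$ can swallow it entirely. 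Concretely, the configuration
\begin{equation*}
u(z)-\phi(z)=\sigma^i\bar p_i\,z_1-\eta\,\sigma^i z_1+\eta\,\min\bigl(\sigma^i z_1,(z_2-\bar z_2)^2\bigr)
\qquad\hbox{on the $\Omega^i$ side near }\bar z
\end{equation*}
is consistent with every fact you invoke: $u\le\phi$, every slope $>\bar p_i$ is admissible and no slope $<\bar p_i$ is (so the critical slope is exactly $\bar p_i$), yet $e(z)\le\eta|z-\bar z|^2$ on a whole one-sided neighborhood, so $\Psi_\eta\le 0$ there and its maximum is attained only at $\bar z\in\Ga$, where the interior equation cannot be used. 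Hence the ``calibration of $\eta$ and $r$'' you flag is not a routine tuning issue: shrinking $r$ or refining a penalty that vanishes only at $\bar z$ cannot repair it, because the needed lower bound on the excess simply is not available.

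What is missing is the other half of the definition of $\bar p_i$, which you never use: since $\bar p_i+\eta$ \emph{is} admissible, there is $r_1(\eta)>0$ with $u\le\phi+\sigma^i(\bar p_i+\eta)z_1$ on the one-sided $r_1$-neighborhood, whence $e(z)\le 2\eta\,\sigma^i z_1\le 2\eta r$ there. This is the estimate the proof of Lemma~2.8 in \cite{imbert:hal-00832545} (to which the paper defers) exploits. Working in $\overline{B(\bar z,r)}\cap\overline{\Omega^i}$ with $r=\min(r_1(\eta),\eta)$ and a \emph{flat-bottomed} penalty such as $P_\eta(z)=C_\eta\bigl(|z-\bar z|-r/2\bigr)_+^2$ with $C_\eta\sim\eta/r$, one keeps the full (unquantified) positivity of $e$ at an excess point inside $B(\bar z,r/2)$, while on the outer sphere $e-P_\eta\le 2\eta r-C_\eta(r/2)^2<0$; the maximum point is therefore interior, satisfies $|z^*_\eta-\bar z|\le r\le\eta\to 0$, the penalty contributes a gradient of size $O(\eta)$, and your limiting argument then goes through verbatim. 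Finally, note that forming $\bar p_i-\eta$ tacitly assumes $\bar p_i>-\infty$; finiteness is part of the statement (``the real number $\bar p_i$'') and is not addressed in your proposal either.
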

\begin{lemma}
\label{tech_lem:critcal_slope_on_each_branch_supersolution}
  Let $w:\R^2\to \R$ be a supersolution of \eqref{def:HJeffective1} and $\phi\in \cR$  touching $w$ from below 
 at $\bar z=(0,\bar z_2)\in \Ga$. For each $i\in \{L,R\}$. For each $i\in \{L,R\}$,  the real number $\tilde p_i $:
  \begin{equation*}
\tilde p_i = \sup\left\lbrace p\in \R \; :\; \exists r>0   \hbox{ s.t. } \phi(z)+\sigma^i pz_1\le w(z), \; \forall z=(z_1,z_2)\in [0,r)\times (\bar z_2-r,\bar z_2+ r)
  \right\rbrace,
  \end{equation*}
 is nonnegative.  Moreover
  \begin{equation}
  \label{eq:lemma_critcal_slope_on_each_branch_supersolution}
\lambda w(0,\bar z_2) + H^i((0,\bar z_2),D \phi^i(0,\bar z_2)+\sigma^i \tilde p_i e_1)\ge 0.  
  \end{equation}
\end{lemma}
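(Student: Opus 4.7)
The plan is to prove the two conclusions of the lemma separately. The sign condition $\tilde p_i \ge 0$ follows directly from the hypothesis that $\phi$ touches $w$ from below at $\bar z$: the value $p = 0$ always belongs to the set defining the supremum, since $\phi(z) \le w(z)$ in a neighborhood of $\bar z$, hence $\tilde p_i \ge 0$.

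For the viscosity-type inequality, I would argue by contradiction, following the scheme of \cite[Lemma~2.9]{imbert:hal-00832545}. If $\tilde p_i = +\infty$, the coercivity of $H^i(\bar z, \cdot)$ in the $e_1$-component, which is a consequence of the controllability assumption $[\rm{H}3]$, forces $H^i(\bar z, D\phi^i(\bar z) + \sigma^i p\, e_1) \to +\infty$ as $|p|\to\infty$, so the asserted inequality holds in the limit. Otherwise, set $\tilde p := \tilde p_i < +\infty$ and suppose, for contradiction, that
\begin{equation*}
\lambda w(\bar z) + H^i\bigl(\bar z, D\phi^i(\bar z) + \sigma^i \tilde p\, e_1\bigr) \le -2\delta < 0.
\end{equation*}
By continuity of $H^i$, $\phi$ and its first derivatives, there exist $r_0, \eta_0 > 0$ such that, for every $\eta \in [0,\eta_0]$ and every $z \in B(\bar z, r_0) \cap \bar\Omega^i$,
\begin{equation*}
\lambda\bigl(\phi(z) + \sigma^i(\tilde p + \eta) z_1\bigr) + H^i\bigl(z, D\phi(z) + \sigma^i(\tilde p + \eta) e_1\bigr) \le -\delta,
\end{equation*}
which says precisely that the perturbed function $\psi_\eta(z) := \phi(z) + \sigma^i(\tilde p + \eta) z_1$ is a classical strict subsolution of $\lambda u + H^i(z, Du) = 0$ on $B(\bar z, r_0) \cap \bar\Omega^i$.

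The heart of the argument is then a local comparison of $\psi_\eta$ with the viscosity supersolution $w$ on $B(\bar z, r)\cap\Omega^i$, for some $r\le r_0$ to be chosen. On the boundary $\Gamma\cap B(\bar z, r)$ one has $z_1=0$, hence $\psi_\eta = \phi \le w$. On the outer boundary $\partial B(\bar z, r)\cap\bar\Omega^i$, the definition of $\tilde p$ together with a limiting argument gives $w \ge \phi + \sigma^i\tilde p z_1$, so $w - \psi_\eta \ge -\sigma^i\eta z_1 \ge -\eta r$, which is not strictly positive. To upgrade this into a strict inequality, I would replace $\phi$ at the outset by $\phi_\mu(z) := \phi(z) - \mu|z-\bar z|^2$ for an auxiliary parameter $\mu > 0$; this leaves both $\phi(\bar z)$ and $D\phi^i(\bar z)$ unchanged, makes $\bar z$ a strict local minimum of $w-\phi_\mu$ on $\bar\Omega^i$, and produces an extra contribution $\mu|z-\bar z|^2 \ge \mu r^2$ in the outer-boundary estimate, turning it into $w-\psi_\eta^\mu \ge r(\mu r-\eta) > 0$ provided $\eta < \mu r$. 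The classical comparison principle for $\lambda u + H^i(z, Du) = 0$ in the bounded open set $B(\bar z, r)\cap\Omega^i$ with Dirichlet-type data on the relative boundary then propagates the inequality $\psi_\eta^\mu \le w$ to the interior; a final careful passage to the limit $\mu \to 0$ (with $\eta$ tuned accordingly) contradicts the maximality of $\tilde p$ by producing an admissible slope strictly greater than $\tilde p$ on a smaller neighborhood of $\bar z$.

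The main technical obstacle is the coupled smallness conditions on $(r, \mu, \eta)$: the extra gradient perturbation $-2\mu(z-\bar z)$ introduced by the quadratic modification adds an $O(\mu r)$ error to the subsolution deficit and so must remain small compared to $\delta$, while simultaneously $\mu r$ must exceed $\eta$ to preserve the outer-boundary strict inequality. The key structural fact that reconciles these requirements is that the perturbation $-\mu|z-\bar z|^2$ vanishes to second order at $\bar z$, so the strict subsolution deficit $\delta$ is preserved at the critical point and only degrades by an $O(\mu r + \eta)$ amount on the whole ball. Transferring the resulting inequality for $\phi_\mu$ back into a contradiction with the maximality of $\tilde p_i$ for the original $\phi$ is the subtlest step, and follows the fine details of the analogous argument in \cite[Lemma~2.9]{imbert:hal-00832545}.
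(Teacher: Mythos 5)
Your sign argument for $\tilde p_i\ge 0$ and the reduction to a strict classical subsolution $\psi_\eta=\phi+\sigma^i(\tilde p_i+\eta)z_1$ on a half-ball are fine, and the "comparison with a strict subsolution" scheme is a legitimate general strategy. But there are two gaps, one of which is structural. First, the outer-boundary estimate is unjustified as stated: the definition of $\tilde p_i$ only gives, for each $p<\tilde p_i$, a radius $r_p$ (depending on $p$, possibly shrinking to $0$ as $p\uparrow\tilde p_i$) on which $\phi+\sigma^i p z_1\le w$; no "limiting argument" yields $w\ge\phi+\sigma^i\tilde p_i z_1$ on a \emph{fixed} sphere $\partial B(\bar z,r)$ (take $w(z)=\phi(z)+\sigma^i\tilde p_i z_1-z_1^2$ to see the critical slope itself need not be admissible on any neighborhood). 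This is fixable by working with $\tilde p_i-\varepsilon$ and taking $r\le r_\varepsilon$, at the price of replacing your constraint $\eta<\mu r$ by $\eta+\varepsilon<\mu r$.

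The structural problem is the last step. After the comparison you only get $w(z)\ge\phi(z)+\sigma^i(\tilde p_i+\eta)z_1-\mu|z-\bar z|^2$ on the half-ball, and this cannot contradict the maximality of $\tilde p_i$: admissibility of a slope $p'>\tilde p_i$ requires $\phi+\sigma^i p' z_1\le w$ on a full product half-neighbourhood, and at points with $z_1$ small but $z_2\ne\bar z_2$ the tangential part $\mu(z_2-\bar z_2)^2$ of your penalization is not dominated by any multiple of $z_1$. Nor can you remove it by letting $\mu\to 0$: the outer-boundary requirement $\eta(+\varepsilon)<\mu r$ forces $\eta\to 0$ along with $\mu$, so no slope gain survives the limit (at best one recovers admissibility of $\tilde p_i$ itself, which is not a contradiction). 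Closing the argument along your lines needs a correction term that is $o(z_1)$ near $\bar z$ while still beating $(\eta+\varepsilon)\sigma^i z_1$ on the lateral and far boundaries (an anisotropic, $z_2$-cutoff construction), or else the different route actually used in \cite{imbert:hal-00832545}, namely extracting an interior minimum point of $w-\phi-\sigma^i(\tilde p_i+\varepsilon)z_1$, applying the supersolution inequality there and passing to the limit $\varepsilon\to 0$. Note also that the paper gives no proof of Lemma~\ref{tech_lem:critcal_slope_on_each_branch_supersolution} beyond deferring to Lemmas 2.8--2.9 of \cite{imbert:hal-00832545}, and since that proof does not follow your comparison scheme, deferring "the subtlest step" to it does not repair the gap.
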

We are now ready to prove Theorem \ref{th:restriction_set_of_test_functions}.  
\paragraph{Subsolutions}
Let $\Pi$ be a map as in Definition~\ref{def:test_functions_set_restricted}.  
 Suppose that a subsolution $u$  of (\ref{def:HJeffective1}) satisfies (\ref{eq:th_restriction_set_of_test_functions})  for all $z\in \Gamma$ and 
all test-functions in $\cR^\Pi(z)$ touching $u$ from above at  $z$.  
\\
 Let $\phi\in \cR$ be such that $u-\phi$ has a strict local maximum at $\bar z\in \Ga$ and that $u(\bar z)= \phi(\bar z)$.  We wish to prove that 
\begin{equation}
\label{eq:appendixA_proof1}
\lambda u(\bar z)+\max\left(E(\bar z_2,\partial_{z_2}\phi(\bar z)), H_\Ga(z, D \phi^L(\bar z),D \phi^R(\bar z)) \right) \le 0.
\end{equation}
From Lemma \ref{tech_lem:critcal_slope_on_each_branch_subsolution}, for each $i\in \{L,R\}$, there exists $\bar p_i\le 0$ such that
\begin{equation}
\label{eq:appendixA_proof2}
\lambda u(\bar z) + H^i(\bar z,D \phi^i(\bar z)+\sigma^i\bar p_i e_1)\le 0.  
\end{equation}
From the monotonicity properties of the Hamiltonians $H^{+,i}$  stated in  Lemma~\ref{sec:effect-hamilt-gamma-1},
\[
\begin{split}
H_\Ga(z, D \phi^L(\bar z),D \phi^R(\bar z)) &\le  H_\Ga(z, D \phi^L(\bar z)-\bar p_L e_1,D \phi^R(\bar z)+\bar p_R e_1)\\  
&\le \max \left( H^L(z, D \phi^L(\bar z)-\bar p_Le_1),H^R(z, D \phi^R(\bar z)+\bar p_Re_1)\right).
\end{split}
\]
Hence,  from \eqref{eq:appendixA_proof2},
\begin{equation*}
\lambda u(\bar z) +H_\Ga(\bar z, D \phi^L(\bar z),D \phi^R(\bar z)) \le 0.
\end{equation*}
Therefore, in order to prove \eqref{eq:appendixA_proof1}, we are left with checking that
\begin{equation}
\label{eq:appendixA_proof1bis}
\lambda u(\bar z)+E(\bar z_2,\partial_{z_2}\phi(\bar z)) \le 0.
\end{equation}
Recall that from Proposition \ref{cor:E_bigger_than_E_0}, $E(\cdot,\cdot)\ge E_0(\cdot,\cdot)$. If $E(\bar z_2,\partial_{z_2}\phi(\bar z))=E_0(\bar z_2,\partial_{ z_2}\phi(\bar z))$, then \eqref{eq:appendixA_proof1bis} is a direct consequence of \eqref{eq:appendixA_proof2}.
 Let us consider the case when $E(\bar z_2,\partial_{ z_2}\phi(\bar z))>E_0(\bar z_2,\partial_{ z_2}\phi(\bar z))$ and 
 assume by contradiction that
 \begin{equation}
   \label{eq:21}
\lambda u(\bar z)+E(\bar z_2,\partial_{ z_2}\phi(\bar z)) > 0. 
\end{equation}
Then, from \eqref{eq:appendixA_proof2} , for any $i\in \{L,R\}$,
\begin{equation*}
 H^{-,i}(\bar z,D \phi^i(\bar z)+\sigma^i\bar p_i e_1) \le-\lambda u(\bar z)<E(\bar z_2,\partial_{ z_2}\phi(\bar z)).
\end{equation*}
From this and the monotonicity properties of the functions $p\in \R\mapsto H^{-,i}(z,\partial_{z_2}\phi(\bar z) e_2+p e_1)$, we deduce that
\begin{displaymath}
\Pi^L(\bar z_2, \partial_{z_2}\phi(\bar z))  <  \partial_{z_1}\phi^L(\bar z)-\bar p_L,
\quad \hbox{and}\quad 
\Pi^R(\bar z_2, \partial_{z_2}\phi(\bar z))  >  \partial_{z_1}\phi^R(\bar z)+\bar p_R.
\end{displaymath}
Thus, the modified test-function $\varphi \in \cR^\Pi(\bar z)$ defined by
\begin{displaymath}
\varphi(z)=\phi(0,z_2)+1_{\Omega^L}(z)\Pi^L(\bar z_2, \partial_{z_2}\phi(\bar z))z_1+1_{\Omega^R}(z)\Pi^R(\bar z_2, \partial_{z_2}\phi(\bar z))z_1
\end{displaymath}
is such that  $u-\varphi$ has a local maximum at $\bar z$,  and therefore
\begin{equation*}
  \lambda u(\bar z)+\max\left(E(\bar z_2,\partial_{z_2}\varphi(\bar z)), H_\Ga(\bar z, D \varphi^L(\bar z),D \varphi^R(\bar z)) \right) \le 0,
\end{equation*}
which contradicts   (\ref{eq:21}).
\paragraph{Supersolutions}
 Suppose that a supersolution $u$  of (\ref{def:HJeffective1}) satisfies (\ref{eq:th_restriction_set_of_test_functions})  for all $z\in \Gamma$ and 
all test-functions in $\cR^\Pi(z)$ touching $u$ from below at  $z$.  
\\
Let $\phi\in \cR$ be such that $u-\phi$ has a strict local maximum at $\bar z\in \Ga$ with $u(\bar z)=\phi(\bar z)$. We wish to prove that 
\begin{equation}
\label{eq:appendixA_proof1_super}
\lambda u(\bar z)+\max\left(E(\bar z_2,\partial_{ z_2}\phi(\bar z)), H_\Ga(z, D \phi^L(\bar z),D \phi^R(\bar z)) \right) \ge 0.
\end{equation}
From Lemma \ref{tech_lem:critcal_slope_on_each_branch_supersolution}, for each $i\in \{L,R\}$ there exists $\tilde p_i\ge 0$ such that
\begin{equation}
\label{eq:appendixA_proof2_super}
\lambda u(\bar z) + H^i(\bar z,D \phi^i(\bar z)+\sigma^i\tilde p_i e_1)\ge 0,
\end{equation}
and using the monotonicity properties of the Hamiltonians $H^{+,i}$, see Lemma~\ref{sec:effect-hamilt-gamma-1},
\begin{equation}
\label{eq:appendixA_proof2_super_bis}
H_\Ga(\bar z, D \phi^L(\bar z),D \phi^R(\bar z)) \ge  H_\Ga(\bar z, D \phi^L(\bar z)-\tilde p_Le_1,D \phi^R(\bar z)+\tilde p_Re_1).
\end{equation}
If for some $i\in \{L,R\}$, $H^{+,i}(\bar z,D \phi^i(\bar z)+\sigma^i\tilde p_i e_1)=H^i(\bar z,D \phi^i(\bar z)+\sigma^i\tilde p_i e_1)$, 
then \eqref{eq:appendixA_proof1_super} follows readily from \eqref{eq:appendixA_proof2_super} and \eqref{eq:appendixA_proof2_super_bis}
so we may now suppose that
\begin{equation}
\label{eq:proof_reduction_set_of_test_function_key_inequality_1}
H^i(\bar z,D \phi^i(\bar z)+\sigma^i\tilde p_i e_1)=H^{-,i}(\bar z,D \phi^i(\bar z)+\sigma^i\tilde p_i e_1), \quad i=L,R.
\end{equation}
 Assume by contradiction that 
$\lambda u(\bar z)+\max\left(E(\bar z_2,\partial_{z_2}\phi(\bar z)), H_\Ga(\bar z, D \phi^L(\bar z),D \phi^R(\bar z)) \right)< 0$.
Thus, from \eqref{eq:appendixA_proof2_super} and (\ref{eq:proof_reduction_set_of_test_function_key_inequality_1}),
\begin{equation}
\label{eq:proof_reduction_set_of_test_function_key_inequality_2}
E(\bar z_2,\partial_{ z_2}\phi(\bar z)) < -\lambda u(\bar z) \le   H^{-,i}(\bar z,D \phi^i(\bar z)+\sigma^i\tilde p_i e_1), \quad i=L,R.
\end{equation}
From \eqref{eq:proof_reduction_set_of_test_function_key_inequality_1}, \eqref{eq:proof_reduction_set_of_test_function_key_inequality_2}
 and the monotonicity properties of the functions $p\in \R\mapsto H^{-,i}(z,\partial_{z_2}\phi(\bar z) e_2+p e_1)$, we deduce that
\begin{displaymath}
\Pi^L(\bar z_2, \partial_{z_2}\phi(\bar z))  >  \partial_{z_1}\phi^L(\bar z)-\tilde p_L,\quad \hbox{and}\quad 
\Pi^R(\bar z_2, \partial_{z_2}\phi(\bar z))  <  \partial_{z_1}\phi^R(\bar z)+\tilde p_R.
\end{displaymath}
Since the modified test-function $\varphi\in \cR^\Pi(\bar z)$,
\begin{displaymath}
\varphi(z)=\phi(0,z_2)+1_{\Omega^L}\Pi^L(\bar z_2, \partial_{z_2}\phi(\bar z))z_1+1_{\Omega^R}\Pi^R(\bar z_2, \partial_{z_2}\phi(\bar z))z_1,
\end{displaymath}
is such that  $u-\varphi$ has a local minimum at $\bar z$, we get
\begin{displaymath}
\lambda u(\bar z)+\max\left(E(\bar z_2,\partial_{z_2}\phi(\bar z)), H_\Ga(z, D \varphi^L(\bar z),D \varphi^R(\bar z))\right)\ge 0,
\end{displaymath}
which is the desired contradiction.

\paragraph{Acknowledgements}
The authors  were partially funded  by the ANR  project ANR-12-BS01-0008-01.
The first author  was partially funded  by the ANR project ANR-12-MONU-0013.
{\small
\bibliographystyle{amsplain}
\bibliography{homog_interface}
}
\end{document}